\definecolor{rouge}{rgb}{0.7,0.00,0.00}
\definecolor{vert}{rgb}{0.00,0.5,0.00}
\definecolor{bleu}{rgb}{0.00,0.00,0.8}
\newtheorem{theorem}{Theorem}[section]
\newtheorem*{theorem*}{Theorem}
\newtheorem{lemma}[theorem]{Lemma}
\newtheorem{proposition}[theorem]{Proposition}
\newtheorem{condition}{Condition}
\newtheorem{conditionA}{A\kern-0.1mm}
\theoremstyle{definition}
\newtheorem{example}[theorem]{Example}
\newtheorem{remark}[theorem]{Remark}
\def \eref#1{\hbox{(\ref{#1})}}
\numberwithin{equation}{section}
\def\bf#1{\mathbf{#1}}
\def\geq{\geqslant}
\def\leq{\leqslant}
\def\RR{\mathbb{R}}
\def\PP{\mathbb{P}}
\def\EE{\mathbb{E}}
\def\vare{{\varepsilon}}
\def \eref#1{\hbox{(\ref{#1})}}
\def\EE{\mathbb{ E}}
\begin{document}

\title[Poisson equation and SDEs with State-Dependent Switching]
{Poisson Equation and Application to Multi-Scale SDEs with State-Dependent Switching}

\author{Xiaobin Sunsun}
\curraddr[Sun, X.]{ School of Mathematics and Statistics/RIMS, Jiangsu Normal University, Xuzhou, 221116, China}
\email{xbsun@jsnu.edu.cn}

\author{Yingchao Xie}
\curraddr[Xie, Y.]{ School of Mathematics and Statistics/RIMS, Jiangsu Normal University, Xuzhou, 221116, China}
\email{ycxie@jsnu.edu.cn}

\begin{abstract}
In this paper, we study the averaging principle and central limit theorem for multi-scale stochastic differential equations with state-dependent switching. To accomplish this, we first study the Poisson equation associated with a Markov chain and the regularity of its solutions. As applications of the results on the Poisson equations, we prove three averaging principle results and two central limit theorems results. The first averaging principle result is a strong convergence of order $1/2$ of the slow component $X^{\varepsilon}$ in the space $C([0,T],\RR^n)$. The second averaging principle result is a weak convergence of $X^{\varepsilon}$ in $C([0,T],\RR^n)$. The third averaging principle result is a weak convergence of order $1$ of $X^{\varepsilon}_t$ in $\RR^n$ for any fixed $t\ge 0$. The first central limit theorem type result is a weak convergence of $(X^{\varepsilon}-\bar{X})/\sqrt{\varepsilon}$ in $C([0,T],\RR^n)$, where $\bar{X}$ is the solution of the averaged equation. The second central limit theorem type result is a  weak convergence of order $1/2$ of $(X^{\varepsilon}_t-\bar{X}_t)/\sqrt{\varepsilon}$ in $\RR^n$ for fixed $t\ge 0$. Several examples are given to show that all the achieved orders are optimal.
\end{abstract}

%\date{\today}
\subjclass[2000]{ Primary 60H10}
\keywords{Poisson equation; Stochastic differential equations with state-dependent switching; Central limit theorem;  Averaging principle; Strong convergence; Weak convergence}

\maketitle
\tableofcontents

\section{Introduction}
Stochastic differential equations (SDEs) with state-dependent switching are used to describe the evolution of the following complicated system: There are countably many scenarios  $\mathbb{S}$ for the system; in each scenario, the system evolves according to a certain diffusion; the system can switch between scenarios  and the switching depends on the current location or state of the system. The switching process is typically modeled by a right-continuous pure jump process. More precisely, we will use $X_t$ to denote the spatial location (or state) of the system is in at time $t$ and $\alpha_t$ to denote the scenario that the system is in at time $t$. SDEs with state-dependent switching consist of the following system of stochastic equations:
\begin{align}\label{e:eqn1}
\begin{cases} dX_t=b(X_t, \alpha_t)dt+\sigma(X_t, \alpha_t)dW_t,\\
\mathbb{P}\{\alpha_{t+\Delta}=j|\alpha_{t}=i, X_s, \alpha_s, s\leq
t\}=\left\{\begin{array}{l}
\displaystyle q_{ij}(X_t)\Delta+o(\Delta),~~~~~~i\neq j\\
1+q_{ii}(X_t)\Delta+o(\Delta),~~~i=j,\end{array}\right.\\
(X_0,\alpha_0)=(x,\alpha)\in \RR^n \times \mathbb{S},
\end{cases}
\end{align}
where $\{W_t\}_{t\geq 0}$ is a standard $d$-dimensional Brownian motion on $ (\Omega,\mathcal{F},\mathbb{P})$ with natural filtration $\{\mathcal{F}_t\}_{t\geq 0}$, $b$ is a function from $\mathbb{R}^n\times\mathbb{S}$ to $\mathbb{R}^n$, $\sigma$ is a function from $\mathbb{R}^n\times\mathbb{S}$ to $\mathbb{R}^n\times \mathbb{R}^d$ and $Q(x)= (q_{ij}(x))_{1 \le i,j \le m_0}$ is a $Q$-matrix depending on $x\in \mathbb{R}^n$.

SDEs with state-dependent switching are widely used in various fields, including finance, engineering, biology, and physics. In finance, this kind of systems are used to model the behavior of financial assets and derivatives under the influence of both random market fluctuations and state-dependent trading strategies. In engineering, this kind of systems are used to study the dynamics of systems subjected to switching control policies that depend on the system's current state. Due to the intricate interplay between the diffusions and the switching mechanism, SDEs with state-dependent switching are more difficult to study than traditional SDEs.

Recently, SDEs with state-dependent switching have attracted  the attention of many researchers, with ongoing efforts aimed at deepening our understanding of these complex dynamical systems and  exploring their applications. For recent results in this area, we refer our readers to \cite{HNSX2019,MY2006,NYZ2017,S2015,SX2014,SX2018,XZ2017,XZW2021,YZ1998,YZ2010,YM2003} and references therein.

In this paper, we focus on situations where the switching process $\alpha_t$ evolves at a much faster rate compared to the diffusion process $X_t$. This scenario is particularly relevant in some  neuron models (see \cite[Section 3]{PTW2012}).
We introduce a small positive parameter $\varepsilon$ that represents the ratio of the time scales between the slow and fast components. The switching process exhibits rapid jumps on a time scale of $O(\varepsilon^{-1})$, which is described by the state-dependent transition matrix $Q^{\varepsilon}(x)=\frac{1}{\varepsilon}Q(x)$, while the diffusion process $X_t$ evolves on a slower time scale of $O(1)$. So we are interested in the following system of stochastic equations:
\begin{align}\label{e:eqn2}
\begin{cases}
dX^{\varepsilon}_t=b(X^{\varepsilon}_{t},\alpha^{\varepsilon}_{t})dt+\sigma( X^{\varepsilon}_t,\alpha^{\varepsilon}_t)dW_{t},\\
\mathbb{P}\left(\alpha^{\varepsilon}_{t+\Delta}=j|\alpha^{\varepsilon}_{t}=i,X^{\vare}_s,\alpha^{\varepsilon}_{s},s\leq t\right)=\left\{\begin{array}{l}
\displaystyle \varepsilon^{-1}q_{ij}(X^{\vare}_t)\Delta+o(\Delta),\quad i\neq j,\\
1+\varepsilon^{-1}q_{ii}(X^{\vare}_t)\Delta+o(\Delta),\quad i=j,\end{array}\right.\\
(X^{\varepsilon}_0,\alpha^{\varepsilon}_{0})=(x,\alpha)\in \mathbb{R}^n\times \mathbb{S}.
\end{cases}
\end{align}

More generally, one can consider the case $Q^{\varepsilon}(x)=\tilde{Q}(x)+\varepsilon^{-1}Q(x)$, in which the switching process exhibits two distinct time scales (see \cite[Part IV]{YZ2010} or \cite[Subsection 4.3]{PTW2012}). However, in this paper, we will only consider the case mentioned in the paragraph above, that is, we  will only consider the case when $\tilde{Q}(x)\equiv 0$.

Due to the different time scales and the  interactions between the fast and slow components, multi-scale SDEs with state-dependent switching are difficult to study directly.  It is very helpful to find a simpler equationthat captures the long-term evolution of the system. This leads to the study of the limit behavior of the solution $X^{\varepsilon}$ of \eqref{e:eqn2} as $\varepsilon\to 0$. This study is usually called the averaging principle. The limit process $\bar X$ is the unique solution of the following averaged equation:
\begin{eqnarray*}\left\{\begin{array}{l}
\displaystyle d\bar{X}_{t}=\bar{b}( \bar{X}_{t})dt+\bar{\sigma}(\bar X_t)dW_t,\\
\bar{X}_{0}=x\in \mathbb{R}^n,\end{array}\right. \label{OAR}
\end{eqnarray*}
where $\bar{b}$ and $\bar\sigma$ are the corresponding averaged coefficients (see Eq.\eref{AR21} below).

In the literature, two kinds of limits are usually considered: strong and weak convergences. Let $(\mathbb{H},\|\cdot\|_{\mathbb{H}})$ be a Banach space and $\mathbb{C}$ be a class of functions $f: \mathbb{H}\rightarrow \RR$.
Suppose $Y$ and $\{Y_{n}\}_{n\geq 1}$ are $\mathbb{H}$-valued random variables.
\begin{itemize}
\item $\{Y_{n}\}_{n\geq 1}$ is said to converges strongly to $Y$ in $\mathbb{H}$ if
\begin{eqnarray}
\lim_{n\rightarrow \infty}\mathbb{E}\|Y_{n}-Y\|_{\mathbb{H}}=0.\label{StrongA1}
\end{eqnarray}
Furthermore, we say the strong convergence is of order $k>0$ if
\begin{eqnarray}
\mathbb{E}\|Y_{n}-Y\|_{\mathbb{H}}\leq Cn^{-k}, \quad \forall n\geq 1,\label{StrongA2}
\end{eqnarray}
where $C>0$ is a constant.

\item $\{Y_{n}\}_{n\geq 1}$ is said to converges weakly to $Y$ in $\mathbb{H}$ under the set $\mathbb{C}$ if
\begin{eqnarray}
\lim_{n\rightarrow \infty}|\mathbb{E}\phi(Y_{n})-\mathbb{E}\phi(Y)|=0,\quad \forall \phi\in\mathbb{C}.\label{WeakA1}
\end{eqnarray}
Furthermore, we say the weak convergence is of order $k>0$ if
\begin{eqnarray}
|\mathbb{E}\phi(Y_{n})-\mathbb{E}\phi(Y)|\leq C_{\phi}n^{-k}, \quad \forall n\geq 1,\phi\in\mathbb{C},\label{WeakA2}
\end{eqnarray}
where $C_\phi>0$ is a constant which might depends on $\phi$.
%}
\end{itemize}

The parameter $n\to\infty$ can be interpreted in different ways. In our case, we will use  the parameter $\varepsilon\to 0$. It is clear that if $\mathbb{C}$ is the set of bounded continuous functions on $\mathbb{H}$, weak convergence corresponds to convergence in law. On the other hand, if $\mathbb{C}$ represents the set of Lipschitz functions on $\mathbb{H}$, the weak convergence order is at least as good as the strong convergence order. In general, the weak order depends on the specific class $\mathbb{C}$ being considered.

Multi-scale systems find extensive applications in various fields such as nonlinear oscillations, chemical kinetics, biology, and climate dynamics, see e.g. \cite{BR,FW,PS2008,WTRY16}. For classical multi-scale SDEs, an often-used to achieve strong convergence \eref{StrongA1} or \eref{StrongA2} is Khasminskii's time discretization technique, see e.g. \cite{B2012,F2022,GP1,L2010,LRSX2020}. For weak convergence \eref{WeakA2}, a commonly used technique is the method of asymptotic expansion of solutions of Kolmogorov equations in the parameter $\vare$, see e.g. \cite{B2012,DSXZ, FWLL, KY2004}. In recent years, the Poisson equation technique has proven to be highly efficient in achieving strong convergence
\eref{StrongA2}, while a combination of the Poisson equation with the Kolmogorov equation has been pretty successful in the study of weak convergence \eref{WeakA2}, see e.g. \cite{B2020, RSX2021, RX2021, SSWX2023, SX2023, SXX2022}. For more related results on the averaging principle for multi-scale stochastic systems, see e.g. \cite{C2009,HL2020,LS2022}.

%Multi-scale systems find extensive applications in various fields such as nonlinear oscillations, chemical kinetics, biology, and climate dynamics, see e.g. \cite{BR,FW,HL2020,PS2008,WTRY16}.

In the state-independent switching setting, i.e., the case with $Q(x)\equiv Q$, several papers have studied the averaging principle (e.g., \cite{BYY2017,PXY2018,PXYZ2018,YY2004}). In the state-dependent switching case,  particularly for models described by \eref{e:eqn2}, Faggionato et al. \cite{FGR2010} studied the averaging principle in terms of convergence in probability in the case  $\sigma\equiv 0$ and  $\mathbb{S}$ is finite. Mao and Shao \cite{MS2022} studied the averaging principle for the case $\mathbb{S}$ is countably infinite. However, the optimal convergence order was not established in the papers mentioned above.

It is well known that finding the optimal convergence rate is very important numerical applications, functional limit theorems and homogenization problems (see e.g. \cite{B2020, PS2008}). In this paper, we will use the Poisson equation technique to study the strong and weak convergences of solutions to \eref{e:eqn2} and try to find the optimal convergence rates.

%\vspace{-2mm}
~\\
\textbf{Poisson equation}: We first study the following Poisson equation on a countable space $\mathbb{S}$:
$$
-Q(x)\Phi(x,\cdot)(i)= F(x,i), \quad x\in\RR^n, i\in\mathbb{S},
$$
where $Q(x)$ is the generator of an $\mathbb{S}$-valued Markov chain $\{\alpha^{x}_t\}_{t\geq 0}$.
Assume that $F(x,i)$ satisfies the "\emph{central condition}":
\begin{eqnarray*}
\sum_{i\in\mathbb{S}}F(x,i)\mu^x_i=0,\quad \forall x\in \RR^n,
\end{eqnarray*}
where $\mu^x$ is the unique invariant measure of $\{\alpha^{x}_t\}_{t\geq 0}$. Under some suitable conditions, the above Poisson equation admits a solution
$$
\Phi(x,i)=\int_{0}^{\infty}\EE F(x,\alpha_{t}^{x,i})dt,
$$
where $\{\alpha^{x,i}_t\}_{t\geq 0}$  stands for the Markov chain $\{\alpha^{x}_t\}_{t\geq 0}$ with initial value $\alpha^{x}_0=i$.

In the last few decades, the Poisson equation and its applications in diffusion approximation and central limit type theorems in various stochastic multi-scale systems, as well as in  finding optimal convergence orders in both strong and weak senses, have been intensively studied.
For example, Pardoux and Veretennikov \cite{PV1,PV2}  studied the regularity of solutions to the following Poisson equation:
$$
-\mathcal{L}(x)\Phi(x,\cdot)(y)= b(x,y),
$$
where $b$ satisfies the similar "\emph{central condition}" and
$$\mathcal{L}(x)h(x,\cdot)(y)=\sum_{i,j}a_{ij}(x,y)\frac{\partial^2 h(x,y)}{\partial{y_i}\partial{y_j}}+\sum_{i}f_i(x,y)\frac{\partial h(x,y)}{\partial{y_i}}.$$
The regularity results were further applied to a study on the diffusion approximation for two-scaled diffusion processes. However, the papers mentioned above mainly focused on Poisson equations associated with diffusion processes and the techniques of these papers do not work in our discrete setting. The first purpose of this paper is to establish the regularity of the solution $\Phi(x,i)$ of the Poisson equation associated with a Markov chain. Specifically, under suitable conditions on $Q(x)$ and $F(x,i)$, we will study the derivatives $\partial_{x}\Phi(x,i)$ and $\partial^2_{x}\Phi(x,i)$ of $\Phi(x,i)$ with respect to $x$.

%\vspace{-2mm}
~~\\
\textbf{Averaging principle}: As an application of
%Poisson equation, we will further study the strong and weak approximations of Eq.\eref{Equation} and Eq.\eref{1.alpha}.
the Poison equation, we study the strong and weak convergences of solutions of \eref{e:eqn2}.
%For the strong approximation, when $\sigma(x,\alpha)=\sigma(x)$, we get that
For strong convergence, we get that when $\sigma(x,\alpha)=\sigma(x)$
\begin{eqnarray*}
\mathbb{E}\left(\sup_{0\leq t\leq T}\left |X_{t}^{\varepsilon}-\bar{X_{t}}\right |^{p}\right)\le C_{p,T}\varepsilon^{p/2},\quad \forall \varepsilon\in (0,1], \label{Omain}
\end{eqnarray*}
where $\bar X$ is the solution of the corresponding averaged equation as $\varepsilon\to0$. Note that condition $\sigma(x,\alpha)=\sigma(x)$ is necessary, otherwise the strong convergence may fails (see Remark \ref{R5.4}).

In Example \ref{Ex2.9} we will show that the strong order $1/2$ above is optimal. We focus on the case when the drift coefficient $b(x,\alpha)$ only satisfies the monotonicity condition with respect to $x$, particularly the case when $b(x,\alpha)=-|x|^2 x +\tilde{b}(\alpha)$, where $\tilde{b}$ is a bounded function on $\mathbb{S}$.
The  papers \cite{SSWX2023} and \cite{CLR2023} have examined the optimal strong convergence order when the switching process is also a diffusion. However, the results and techniques of these papers don't fit into our setting.

For weak convergence,  it is important to consider the dependence of $\sigma(x,\alpha)$ on $\alpha$. We will study the weak convergences of $\{X^{\varepsilon}\}_{\varepsilon\in (0,1]}$ in $C([0,T],\RR^n)$ and $\{X^{\varepsilon}_t\}_{\varepsilon\in (0,1]}$ in $\RR^n$ for fixed $t\ge 0$, respectively. For the former, we will use  the Poisson equation technique and the martingale problem method to identify the limiting process. For the latter, we will use combine the  Poisson equation and Kolmogorov equation to get a weak convergence of order $1$ under the set $C^4_p(\RR^n)$, the space of functions on $\RR^n$ with all partial derivatives up to order 4 being continuous and of polynomial growth. This order $1$ will be shown to be optimal by a concrete example (see Example \ref{Ex2.11}).

\vspace{-2mm}
~\\
\textbf{Central limit theorem}:
It is also very important to study the fluctuations of $X^\varepsilon$ around the averaged process $\bar{X}$ as $\varepsilon\to 0$. For this, it is crucial to examine the asymptotic behavior of the deviation between $X^{\varepsilon}$ and $\bar{X}$. $$Z_t^\varepsilon:=\frac{X^{\varepsilon}_t-\bar{X}_t}{\sqrt{\varepsilon}},\ \ \text{as}~\varepsilon\to 0.$$
Consequently, the second goal of this paper is to establish weak convergence of $\{Z^{\varepsilon}\}_{\varepsilon>0}$ to $Z$ in $C([0, T];\RR^n)$ as $\varepsilon\to0$, where $Z$ is the unique solution of a new stochastic differential equation that incorporates an additional explicit stochastic integral term.

 %\vspace{0.2cm}
 The central limit theorem (CLT), also known as normal deviations in classical multi-scale SDEs, has been extensively studied. For instance, Khasminskii \cite{K2} investigated normal deviations in a specific class of slow-fast stochastic systems with vanishing noise in the slow equation. Cerrai \cite{C09} extended Khasminskii's results to the infinite-dimensional case. However, the techniques used in \cite{C09, K2} are not applicable to fully coupled stochastic systems. In a different context, Wang and Roberts \cite{WR2012} examined the normal deviations of a stochastic system driven by additive noise in infinite dimensions, where a martingale problem approach was employed to characterize the limiting process. Recently, Hong et al. \cite{HLLS2023} investigated the CLT for a class of multi-scale McKean-Vlasov SDEs, where the martingale representation theorem was used to identify the corresponding limiting process.

In the case when $\sigma\equiv 0$ Pakdaman et al. \cite{PTW2012} used an asymptotic expansion technique to derive a CLT. Subsequently, the averaging principle and CLT for a specific class of multi-scale piecewise-deterministic Markov processes in infinite dimensions were investigated in \cite{GT2012} and \cite{GT2014} respectively. The martingale problem approach was used in \cite{PTW2012, GT2012, GT2014} to identify the limiting process. However, the corresponding weak convergence order has not been addressed in these papers.

In this paper, we will use the Poisson equation technique and the associated martingale problem to characterize the limiting process. Furthermore, we will combine  the Kolmogorov backward equation and the Poisson equation to study weak convergence order for $\{Z^{\varepsilon}_t\}_{\varepsilon>0}$ under the space $C^4_p(\RR^n)$. Specifically, we will show that for any $\phi\in C^4_p(\RR^n)$,
$$
|\mathbb{E}\phi(Z^{\varepsilon}_t)-\mathbb{E}\phi(Z_t)|\leq C_{\phi}\varepsilon^{1/2},
$$
where $C_\phi>0$ is a constant dependent on $\phi$. We will show that the order  $1/2$  above is optimal  by a specific example (see Example \ref{Ex2.9}). As far as we know, these are the first results on the optimal weak convergence order, erms of the CLT, for the stochastic system \eref{e:eqn2}.

%\vspace{-2mm}
~\\
\textbf{Structure of the paper:}
The rest of this paper is organized as follows. In Section 2, we first introduce the notation and assumptions, then we state our main results and give concrete examples to show that the obtained convergence orders are optimal.
In Section 3, we study the Poisson equation associated with a Markov chain. In Section 4, we give the proofs of the three averaging principles Theorems \ref{main result 1},  \ref{main result 1.2} and  \ref{main result 1.3}. The proofs of the  two types of convergences in CLT are given in Section 5.

%\vspace{0.2cm}
%Throughout this paper, the symbols $k$, $k_p$, $C$, $C_{T}$, and $C_{p,T}$ are employed to denote positive constants that may exhibit variability across different lines. In this paper, $k$, $k_p$, $C$, $C_{T}$, and $C_{p,T}$ denotes positive constants whose value may change from one appearance to another.
The usage of $k_p$, $C_{T}$, and $C_{p,T}$ is specifically intended to highlight the dependence on the subscript parameters.

\section{Assumptions and main results}

We give some notations and assumptions firstly, then the main results are stated, some interesting examples are given to show that all the achieved orders are optimal finally.

\subsection{Notations}
In this paper, $\mathbb{S}$ is either the set of all positive integers or $\{1, 2, \ldots, m_0\}$ for some $m_0<\infty$. We  will use $\|\cdot\|_{\infty}$ to denote the supremum norm on $\mathbb{S}$ and $\|\mu-\nu\|_{\rm{var}}$ to denote the total variation distance between probability measures $\mu$ and $\nu$ on $\mathbb{S}$.
We will use $|\cdot|$ and  $\langle\cdot, \cdot\rangle$ to denote  Euclidean  norm and the inner product, respectively. We will use $\|\cdot\|$ to denote  matrix norm or the operator norm if there is no danger of confusion. For any positive integers $k, l$, we will use $\RR^k\otimes\RR^l$ to denote the set of $k\times l$ matrices.
$\RR^\infty\otimes \RR^\infty$ stands for he set of matrices $M=(M_{ij})_{1\leq i,j\leq m_0}$ with  $\|M\|_{\ell}:=\sup_{i\in\mathbb{S}}\sum_{j\in \mathbb{S}}|m_{ij}|<\infty$.

For positive integers $l_1,l_2,k$, the following function spaces will be used later in this paper:

(1) $C^k(\RR^n,\RR^{l_1}\otimes\RR^{l_2})$ (resp. $C^k_b(\RR^n,\RR^{l_1}\otimes\RR^{l_2})$ or $C^k_p(\RR^n,\RR^{l_1}\otimes\RR^{l_2})$) is the space of $\RR^{l_1}\otimes\RR^{l_2}$-valued functions $\varphi(x)$ on $\RR^n$ with all partial derivatives of up to order $k$ being continuous (resp. continuous and bounded or continuous and polynomial growth). $C^k(\RR^n,\RR^{\infty}\otimes\RR^{\infty})$ (resp. $C^k_b(\RR^n, \RR^{\infty}\otimes\RR^{\infty})$ or $C^k_p(\RR^n, \RR^{\infty}\otimes\RR^{\infty})$) stands for space of $\RR^{\infty}\otimes\RR^{\infty}$-valued functions $\varphi(x)$ on $\RR^n$ with  all partial derivatives of up to order $k$ being continuous (resp. continuous and bounded or continuous and polynomial growth).

(2) $C^k(\RR^n\times \mathbb{S},\RR^{l_1}\otimes \RR^{l_2})$ (resp. $C^k_{b}(\RR^n\times \mathbb{S},\RR^{l_1}\otimes \RR^{l_2})$ or $C^k_{p}(\RR^n\times \mathbb{S},\RR^{l_1}\otimes \RR^{l_2})$) is the space of all mappings $\varphi(x,i): \RR^n\times \mathbb{S}\rightarrow \RR^{l_1}\otimes \RR^{l_2}$ with $\varphi(\cdot,i)\in C^k(\RR^n,\RR^{l_1}\otimes \RR^{l_2})$ (resp. $C^k_{b}(\RR^n,\RR^{l_1}\otimes \RR^{l_2})$ or $C^k_p(\RR^n,\RR^{l_1}\otimes \RR^{l_2})$) for any $i\in\mathbb{S}$, and $\|\partial^j_x\varphi(x,\cdot)\|_{\infty}$ are finite (resp. bounded or polynomial growth) with respective to $x$, for any $j=0,\ldots,k$.

\subsection{Assumptions}  Suppose that $b$, $\sigma$ and $Q$ satisfy the following conditions:
\begin{conditionA}\label{A1} There exist $C,k>0$ such that for $x,y\in\RR^n, i,j\in\mathbb{S}$,
\begin{eqnarray}
&&\left\langle b(x,i)-b(y,j),x-y\right\rangle \leq C|x-y|^{2}+C|x-y|1_{i\neq j},\label{ConA11}\\
&&\|b(x,\cdot)-b(y,\cdot)\|_{\infty} \le C\left(1+|x|^{k}+|y|^k\right)|x-y|,\quad\|b(x,\cdot)\|_{\infty}\leq C(1+|x|^{k}),\label{ConA13}\\
&&\|\sigma (x,\cdot)-\sigma (y,\cdot)\|_{\infty}\leq C|x-y|,\quad\|\sigma(x,\cdot)\|_{\infty}\leq C(1+|x|).\label{ConA14}
\end{eqnarray}
%where $\delta_{ij}=0$ for $i=j$, $\delta_{ij}=1$ for $i\neq j$.
	\end{conditionA}

\begin{conditionA}\label{A2}
(i) $Q(x)=(q_{ij}(x))_{i,j\in\mathbb{S}}:\RR^n\rightarrow \RR^{m_0}\otimes\RR^{m_0}$ is measurable and conservative, i.e., for $x\in\RR^n$,
$$
q_{ij}(x)\geq0 ~~\text{for}~~\forall i\neq j\in\mathbb{S},\quad\quad\quad
\sum_{j\in \mathbb{S}}q_{ij}(x)=0~~\text{for}~~\forall i\in \mathbb{S}.
$$

(ii) $Q(x)$ is irreducible, that is, for any $x\in\RR^n$, the equations
$$\mu^{x} Q(x)=0,\quad \text{with}\quad \sum_{i\in\mathbb{S}}\mu^x_i=1$$
have a unique solution $\mu^x=(\mu^x_1,\mu^x_2,\ldots,\mu^x_{m_0})$ with $\mu^x_i>0$ for all $i\in \mathbb{S}$.

%\vspace{2mm}
(iii) Let $P^{x}_t:=(p^{x}_{ij}(t))_{i,j\in\mathbb{S}} $ be the transition probability matrix associated with $Q(x)$. $P^{x}_t$ is exponential ergodicity uniformly in $x$, i.e., there exist  $C>0,\lambda>0$ such that
\begin{eqnarray}
\sup_{i\in \mathbb{S},x\in\RR^n}\|p^{x}_{i\cdot}(t)-\mu^x\|_{\text{var}}\leq Ce^{-\lambda t},\quad \forall t>0.\label{ergodicity}
\end{eqnarray}
\end{conditionA}

\begin{conditionA}\label{A3}
Assume there exists $C>0$ such that
	\begin{eqnarray}
		\|Q(x)-Q(y)\|_{\ell}\leq C|x-y|,\quad \forall x,y\in\RR^n.\label{LipQ}
	\end{eqnarray}
Moreover, there exists $k>0$ such that
	\begin{eqnarray}
		K(x):=\sum_{i\in \mathbb{S}}\sum_{j\in\mathbb{S}\backslash\{i\}}q_{ij}(x)\leq C(1+|x|^k),\quad \forall x\in\RR^n.\label{Finte K}
\end{eqnarray}
\end{conditionA}

\begin{conditionA}\label{A4}
Assume that $Q\in C^1(\RR^n,\RR^{m_0}\otimes\RR^{m_0})$, $\sigma\in C^1_b(\RR^n\times\mathbb{S}, \RR^n\otimes\RR^d)$ and satisfies
\begin{eqnarray}
\inf_{x\in\RR^n,i\in \mathbb{S},z\in \RR^n\backslash\{0\}}\frac{\langle(\sigma(x,i)\sigma^{\ast}(x,i))\cdot z, z\rangle} {|z|^2}>0. \label{NonD}
		\end{eqnarray}
\end{conditionA}

The analysis of various types of convergence requires different regularity on $b$, $\sigma$ and $Q$. For the sake of simplicity, the following assumptions are presented for $k=2,3,4$.

\vspace{2mm}
\noindent
\textbf{${\bf H}_{k}.$} $b\in C^{k}_{p}(\RR^n\times\mathbb{S},\RR^n)$, $\sigma\in C^{k}_{p}(\RR^n\times\mathbb{S},\RR^n\otimes\RR^{d})$ and $Q\in C^{k}_p(\RR^n,\RR^{m_0}\otimes\RR^{m_0})$.

\begin{remark}
(i) Condition \eref{ConA11} implies that for any $x\in\RR^n$, $i\in \mathbb{S}$,
\begin{eqnarray}
\left\langle b(x,i),x\right\rangle \leq C(1+|x|^{2}). \label{ConA12}
\end{eqnarray}
\eref{ConA12} and \eref{ConA14} implies \cite[Assumption 3.4]{NY2016} holds with $V(x)=|x|^2$.  \eref{Finte K} yields \cite[Assumption 3.2 (ii)]{NY2016} holds. Hence, Eq.\eref{e:eqn2} has a unique strong solution by \cite[Theorem 3.5]{NY2016}.

(ii) Assumption \ref{A2} ensures that $Q(x)$ generates a unique Markov process $\{\alpha^{x}_t\}_{t\geq 0}$. The condition \eref{ergodicity} is called strong ergodicity (as discussed in \cite{C2005}) for $Q(x)$ uniformly in $x$. This is equivalent to the following expression:
$$
\lim_{t\rightarrow \infty}\sup_{i\in \mathbb{S},x\in\RR^n}\|p^{x}_{i\cdot}(t)-\mu^x\|_{\text{var}}=0.
$$

(iii) The condition \eref{ConA11} is stronger than the classical monotonicity condition:
\begin{eqnarray}
		\left\langle b(x_{1},i)-b(x_{2},i),x_{1}-x_{2}\right\rangle \leq C|x_{1}-x_{2}|^{2}. \label{REM1}
	\end{eqnarray}
If $b$ is uniformly bounded, \eref{ConA11} and \eref{REM1} are equivalent. Furthermore, \eref{ConA11}, \eref{ergodicity} and \eref{LipQ} play a crucial role in studying the monotonicity condition of the averaged coefficient $\bar{b}$, as explained in detail in Lemma \ref{PMbarX}.

(iv) When $m_0$ is finite,  \eref{LipQ} implies \eref{Finte K}. However, in the case $m_0=\infty$,  \eref{Finte K} is necessary. \ref{A4} is used to study the well-posedness of averaged equation (see Eq.\eref{AR21} below).

\vspace{2mm}
(v) ${\color{blue} {\bf H}_{k}}$ is used to study the regularity of the averaged coefficients $\bar{b}$ and $\bar{\sigma}$ (see Remark \ref{HighR} below).
\end{remark}

\subsection{Main results}
Our main results are stated for Poisson equation, averaging principle and central limit theorem in this subsection.

\subsubsection{Poisson equation} In order to state our first result, we present some notations.

Suppose that $F(x,i)=(F^1(x,i), \ldots, F^n(x,i))$ satisfies the "\emph{central condition}":
\begin{eqnarray}
\sum_{i\in\mathbb{S}}F(x,i)\mu^x_i=0,\quad \forall x\in \RR^n\label{CenCon}
\end{eqnarray}
and $Q(x)=(q_{ij}(x))_{i,j\in\mathbb{S}}$ satisfies \ref{A2}. Considering the following Poisson equation on $\mathbb{S}$:
\begin{eqnarray}\label{PE1}
-Q(x)\Phi(x,\cdot)(i)=F(x,i),
\end{eqnarray}
which is equivalent to
\begin{eqnarray*}
-Q(x)\Phi^l(x,\cdot)(i)=F^l(x,i), \quad l=1,2,\ldots,n,
\end{eqnarray*}
where $\Phi(x,i)=(\Phi^1(x,i), \ldots, \Phi^n(x,i))$.

For Poisson equation \eref{PE1}, we have the followings:

\begin{theorem}\label{Poisson}
Suppose that \ref{A2} holds, $F$ satisfies \eref{CenCon} with $\|F(x,\cdot)\|_{\infty}\!<\!\infty$, $\forall x\in\RR^n$. Put
\begin{eqnarray}
\Phi(x,i)=\int_{0}^{\infty}\EE F(x,\alpha_{t}^{x,i})dt. \label{SPE}
\end{eqnarray}
Then $\Phi(x,i)$ is a solution of Poisson equation (\ref{PE1}) and satisfies
\begin{eqnarray}
\|\Phi(x,\cdot)\|_{\infty}\le C\|F(x,\cdot)\|_{\infty},\label{E1}
\end{eqnarray}
where $\{\alpha^{x,i}_t\}_{t\geq 0}$ is the unique $\mathbb{S}$-valued Markov chain generated by generator $Q(x)$ with initial value $\alpha^{x,i}_0=i\in \mathbb{S}$. Moreover, if $Q\in C^1(\RR^n,\RR^{m_0}\otimes\RR^{m_0})$ and $F\in C^1(\RR^n\times \mathbb{S},\RR^n)$, then there exists a constant $C>0$ such that for any $x\in \RR^n$,
\begin{eqnarray}
\|\partial_{x} \Phi(x,\cdot)\|_{\infty}\le C\left[\|F(x,\cdot)\|_{\infty}\|\nabla Q(x)\|_{\ell}+\|\partial_{x}F(x,\cdot)\|_{\infty}\right].\label{E2}
\end{eqnarray}
Furthermore, if $Q\in C^2(\RR^n,\RR^{m_0}\otimes\RR^{m_0})$ and $F\in C^2(\RR^n\times \mathbb{S},\RR^n)$, then there exists a constant $C>0$ such that for any $x\in \RR^n$,
\begin{eqnarray}
&&\|\partial^2_x\Phi(x,\cdot)\|_{\infty}\le  C\left[\|F(x,\cdot)\|_{\infty}\left (\|\nabla Q(x)\|_{\ell}+\|\nabla Q(x)\|^2_{\ell}+\|\nabla^2 Q(x)\|_{\ell}\right)\right.\nonumber\\
&&\quad\quad\quad\quad\quad\quad\quad\quad\quad+\left.\|\partial_{x}F(x,\cdot)\|_{\infty}\|\nabla Q(x)\|_{\ell}+\|\partial^2_x F(x,\cdot)\|_{\infty}\right].\label{E3}
\end{eqnarray}
\end{theorem}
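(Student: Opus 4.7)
The plan is to first establish the $L^\infty$ bound \eqref{E1} and the Poisson property by direct computation on the probabilistic formula \eqref{SPE}, and then to obtain the derivative bounds \eqref{E2} and \eqref{E3} by iteratively differentiating the Poisson equation itself and applying \eqref{E1} to the new Poisson equation satisfied by each derivative.

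\textbf{Step 1: Basic bound and verification.} Writing $\EE F(x,\alpha_t^{x,i})=\sum_j p^x_{ij}(t)F(x,j)$ and using the central condition \eqref{CenCon} to replace $p^x_{ij}(t)$ by $p^x_{ij}(t)-\mu^x_j$, the uniform exponential ergodicity \eqref{ergodicity} gives $|\EE F(x,\alpha_t^{x,i})|\le 2\|F(x,\cdot)\|_\infty\|p^x_{i\cdot}(t)-\mu^x\|_{\text{var}}\le 2C\|F(x,\cdot)\|_\infty e^{-\lambda t}$, and integration yields \eqref{E1}. To check that $\Phi$ solves \eqref{PE1}, I would apply $Q(x)$ to \eqref{SPE}, use Kolmogorov's backward equation $Q(x)P^x_t=\frac{d}{dt}P^x_t$, the fundamental theorem of calculus, and $\lim_{t\to\infty}p^x_{ik}(t)=\mu^x_k$ together with \eqref{CenCon} to obtain $Q(x)\Phi(x,\cdot)(i)=-F(x,i)$.

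\textbf{Step 2: First derivative estimate \eqref{E2}.} I would differentiate the identity $-\sum_j q_{ij}(x)\Phi(x,j)=F(x,i)$ in $x_l$ to get
\[
-Q(x)\partial_{x_l}\Phi(x,\cdot)(i)\;=\;\partial_{x_l}F(x,i)+\sum_j\partial_{x_l}q_{ij}(x)\Phi(x,j)\;=:\;\widetilde F(x,i).
\]
Using $\partial_{x_l}(\mu^x Q(x))=0$ together with the Poisson equation $\sum_j q_{ij}(x)\Phi(x,j)=-F(x,i)$, a short algebraic manipulation shows
\[
\sum_i\mu^x_i\widetilde F(x,i)=\partial_{x_l}\Big(\sum_i\mu^x_i F(x,i)\Big)=0,
\]
so $\widetilde F$ again satisfies the central condition. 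Applying \eqref{E1} to the Poisson equation for $\partial_{x_l}\Phi$ and bounding $\|\widetilde F(x,\cdot)\|_\infty\le\|\partial_xF(x,\cdot)\|_\infty+\|\nabla Q(x)\|_\ell\|\Phi(x,\cdot)\|_\infty$, then substituting \eqref{E1} for $\|\Phi(x,\cdot)\|_\infty$, yields \eqref{E2}.

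\textbf{Step 3: Second derivative estimate \eqref{E3}.} I would iterate. Differentiating the Poisson equation for $\partial_{x_l}\Phi$ in $x_m$ produces
\[
-Q(x)\partial_{x_l}\partial_{x_m}\Phi(x,\cdot)(i)=\partial_{x_m}\widetilde F(x,i)+\sum_j\partial_{x_m}q_{ij}(x)\,\partial_{x_l}\Phi(x,j),
\]
whose right-hand side satisfies the central condition by the same algebraic identity applied to $\widetilde F$ and $\partial_{x_l}\Phi$ in place of $F$ and $\Phi$. Expanding $\partial_{x_m}\widetilde F$ produces four contributions: $\|\partial^2_xF\|_\infty$; $\|\nabla Q\|_\ell\|\partial_x F\|_\infty$; $\|\nabla^2Q\|_\ell\|\Phi\|_\infty$; and $\|\nabla Q\|_\ell\|\partial_x\Phi\|_\infty$. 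Applying \eqref{E1} to this new Poisson equation and substituting \eqref{E1} and \eqref{E2} for $\|\Phi\|_\infty$ and $\|\partial_x\Phi\|_\infty$ gives exactly \eqref{E3}, with the $\|\nabla Q\|^2_\ell\|F\|_\infty$ term arising precisely from the composition $\|\nabla Q\|_\ell\cdot\|\partial_x\Phi\|_\infty$.

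\textbf{Main obstacle.} The algebraic manipulations are clean; the real technical point is to \emph{justify} that $\Phi$ is $C^1$ (respectively $C^2$) in $x$, so that the formal differentiation of the Poisson equation is valid. The natural route is to differentiate the representation \eqref{SPE} under the integral sign via the Duhamel identity $\partial_{x_l}P^x_t=\int_0^t P^x_{t-s}\,\partial_{x_l}Q(x)\,P^x_s\,ds$; to obtain an integrable-in-$t$ bound for $\partial_{x_l}\EE F(x,\alpha_t^{x,i})$ one must split the $s$-integral at $t/2$, exploit the exponential decay of $p^x_{i\cdot}(\cdot)-\mu^x$ on the late half, and use the row-sum cancellation $\sum_j\partial_{x_l}q_{ij}(x)=0$ (and centrality of $F$) on the early half. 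Once dominated convergence is justified, uniqueness of bounded solutions of the Poisson equation (modulo constants, fixed by a mean-zero normalization) identifies the derivative with the candidate produced by Step~2, and the estimates \eqref{E2}--\eqref{E3} follow as outlined above.
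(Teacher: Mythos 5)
Your Step 1 matches the paper's. For the derivative bounds, however, your main route (differentiate the identity $-Q(x)\Phi(x,\cdot)=F(x,\cdot)$, check that the new right-hand side $\widetilde F$ is central, and apply \eqref{E1} to the resulting Poisson equation) has a gap at the point where you invoke \eqref{E1}: that estimate bounds the \emph{particular} solution $\int_0^\infty \EE\widetilde F(x,\alpha_t^{x,i})\,dt$, whereas $\partial_{x_l}\Phi(x,\cdot)$ can differ from it by a function of $x$ that is constant in $i$ (the kernel of $Q(x)$). Your suggestion that a mean-zero normalization identifies the two is not correct, because $\partial_{x_l}\Phi$ is \emph{not} mean-zero under $\mu^x$: differentiating $\sum_i\mu^x_i\Phi(x,i)=0$ gives $\mu^x(\partial_{x_l}\Phi(x,\cdot))=-(\partial_{x_l}\mu^x)(\Phi(x,\cdot))$, which is nonzero in general. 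The offending constant can be controlled by $C\|\nabla Q(x)\|_\ell\|F(x,\cdot)\|_\infty$ once one proves $|(\partial_{x_l}\mu^x)(f)|\le C\|\partial_{x_l}Q(x)\|_\ell\|f\|_\infty$ (e.g.\ by letting $t\to\infty$ in \eqref{FDP}), so \eqref{E2} survives, but this step is missing and must be supplied; the same issue recurs, compounded, for \eqref{E3}.

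More importantly, the hard analytic content you defer to the ``main obstacle'' paragraph is where the paper's entire proof actually lives, and your sketch of it does not close. Writing $\partial_{x_l}\EE F(x,\alpha_t^{x,i})=\int_0^t P^x_s[\partial_{x_l}Q(x)]P^x_{t-s}F(x,\cdot)(i)\,ds+P^x_t\partial_{x_l}F(x,\cdot)(i)$, the row-sum cancellation and centrality of $F$ only show the Duhamel term is \emph{bounded} uniformly in $t$; splitting at $t/2$ does not make it integrable, because as $t\to\infty$ it converges to the nonzero constant $(\partial_{x_l}\mu^x)(F(x,\cdot))$, which is cancelled only by the limit $\mu^x(\partial_{x_l}F(x,\cdot))$ of the second term. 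One must quantify the rate at which this cancellation occurs. The paper does this by working with the differences $\tilde F_{t_0}(x,i,t)=\hat F(x,i,t)-\hat F(x,i,t+t_0)=\hat F(x,i,t)-\EE\hat F(x,\alpha^{x,i}_{t_0},t)$, which converts everything into differences over the initial state, and then uses the decay estimates \eqref{SDP1}--\eqref{SDP2} for differences $\partial_{x_k}P^x_tf(i)-\partial_{x_k}P^x_tf(i')$ (of order $e^{-\lambda t}t$, resp.\ $e^{-\lambda t}(1+t^2)$) to obtain an integrable-in-$t$ bound uniform in $t_0$. Either you carry out this (or an equivalent) quantitative cancellation argument, or your differentiation under the integral sign is unjustified and Steps 2--3 rest on an unproven hypothesis.
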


%\vspace{0.1cm}
\subsubsection{Averaging principle}
As an application of Th.\ref{Poisson}, we study three types of convergences for averaging principle. Firstly, the strong convergence \eref{StrongA2} is researched. To ensure the validity of the convergence, it is essential to suppose that $\sigma(x,i)$ is independent of $i$, that is $\sigma(x,i)=\sigma(x)$,  otherwise the strong convergence may fails (see a counter-example in Remark \ref{R5.4}). Consequently, the corresponding averaged equation is the following:
\begin{eqnarray}\left\{\begin{array}{l}
\displaystyle d\bar{X}_{t}=\bar{b}( \bar{X}_{t})dt+\sigma(\bar X_t)dW_t,\\
\bar{X}_{0}=x\in \mathbb{R}^n,\end{array}\right. \label{AR1}
\end{eqnarray}
where $\bar{b}(x):=\sum_{j\in\mathbb{S}}b(x,j)\mu^{x}_{j}$.

We have the following strong convergence.

\begin{theorem}\label{main result 1} (\textbf{Strong convergence of $X^{\varepsilon}$ in $C([0,T],\RR^n)$})
Suppose $\sigma(x,i)=\sigma(x)$ and that \ref{A1}-\ref{A3} hold. Then for $x\in\mathbb{R}^n, \alpha\in\mathbb{S}$, $T > 0$ and $p\geq 2$, there exist constants $C_{p,T}>0,k_p>0$ such that
\begin{eqnarray}
\mathbb{E}\left\|X^{\varepsilon}-\bar{X}\right\|^{p}_{C([0,T],\RR^n)}\le C_{p,T}\left(1+|x|^{k_p}\right)\varepsilon^{p/2},\quad \forall \varepsilon\in (0,1], \label{main 1}
\end{eqnarray}
where $\|\cdot\|_{C([0,T],\RR^n)}$ denotes the supremum norm on $[0,T]$, $\bar{X_{t} }$ is the unique solution of Eq.\eref{AR1}.
\end{theorem}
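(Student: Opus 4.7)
The plan is to combine It\^o's formula, the monotonicity of the averaged drift, and the Poisson equation supplied by Theorem \ref{Poisson}, following the Poisson-equation strategy outlined in the introduction. Setting $Y^\varepsilon_t:=X^\varepsilon_t-\bar X_t$, which is a continuous semimartingale because $\sigma$ does not depend on the switching index, I would decompose
$$dY^\varepsilon_t=\bigl[b(X^\varepsilon_t,\alpha^\varepsilon_t)-\bar b(X^\varepsilon_t)\bigr]dt+\bigl[\bar b(X^\varepsilon_t)-\bar b(\bar X_t)\bigr]dt+\bigl[\sigma(X^\varepsilon_t)-\sigma(\bar X_t)\bigr]dW_t$$
and treat the three pieces separately. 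The averaged difference is controlled by the monotonicity of $\bar b$, which follows from \eref{ConA11}, \eref{ergodicity} and \eref{LipQ} through Lemma \ref{PMbarX}, giving $\langle Y^\varepsilon_t,\bar b(X^\varepsilon_t)-\bar b(\bar X_t)\rangle\le C|Y^\varepsilon_t|^2$. The diffusion piece is harmless by the Lipschitz property of $\sigma$ in $x$. The bracket $F(X^\varepsilon_t,\alpha^\varepsilon_t):=b(X^\varepsilon_t,\alpha^\varepsilon_t)-\bar b(X^\varepsilon_t)$ is the delicate fluctuation term, and satisfies the centering condition $\sum_i F(x,i)\mu^x_i=0$ by definition of $\bar b$.

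To extract the order of the fluctuation integral, I would invoke Theorem \ref{Poisson} to obtain $\Phi(x,i)$ solving $-Q(x)\Phi(x,\cdot)(i)=F(x,i)$, together with the polynomial-growth estimates \eref{E1}--\eref{E2} on $\Phi$ and $\partial_x\Phi$. Using that the extended generator of $(X^\varepsilon,\alpha^\varepsilon)$ splits as $\mathcal L_1+\varepsilon^{-1}Q$, with $\mathcal L_1 f(x,i):=\langle b(x,i),\nabla_x f(x,i)\rangle+\tfrac12\Tr[\sigma\sigma^\ast(x)\nabla_x^2 f(x,i)]$, Dynkin's formula applied to $\Phi(X^\varepsilon_t,\alpha^\varepsilon_t)$ rearranges to
$$\int_0^t F(X^\varepsilon_s,\alpha^\varepsilon_s)\,ds=\varepsilon\,\Phi(x,\alpha)-\varepsilon\,\Phi(X^\varepsilon_t,\alpha^\varepsilon_t)+\varepsilon\int_0^t\mathcal L_1\Phi(X^\varepsilon_s,\alpha^\varepsilon_s)\,ds+\varepsilon\,M^\Phi_t,$$
where $M^\Phi$ is the compensated jump martingale generated by the switching events. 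This is the crucial cancellation: the nominally $O(1)$ fluctuation collapses to an $O(\varepsilon)$ bulk contribution plus a jump martingale whose quadratic variation is of order $\varepsilon$.

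I would then apply It\^o to $|Y^\varepsilon_t|^2$ and couple the $\int_0^t\langle Y^\varepsilon_s,F(X^\varepsilon_s,\alpha^\varepsilon_s)\rangle ds$ term to the representation above by a second application of It\^o to $\langle Y^\varepsilon_t,\Phi(X^\varepsilon_t,\alpha^\varepsilon_t)\rangle$ (using $Y^\varepsilon_0=0$ to kill the boundary term at $t=0$). After taking suprema over $[0,T]$ and applying Burkholder--Davis--Gundy to both the continuous and the pure-jump martingale parts, the quadratic terms in $|Y^\varepsilon|$ can be absorbed and Gronwall's lemma yields $\EE\sup_{s\le T}|Y^\varepsilon_s|^2\le C_T(1+|x|^k)\varepsilon$. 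The general $p\ge 2$ case follows by the same scheme applied to $(|Y^\varepsilon|^2+\delta)^{p/2}$ (letting $\delta\downarrow 0$) or alternatively by a standard BDG-bootstrap from the $p=2$ estimate, producing the required rate $\varepsilon^{p/2}$.

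The main obstacle, in my view, is the polynomial-growth bookkeeping. The remainder term $\varepsilon\mathcal L_1\Phi$ inherits polynomial growth from $b$ and from the derivatives of $\Phi$ bounded by Theorem \ref{Poisson} in terms of $\|\nabla Q\|_\ell$ and $\|\partial_x F\|_\infty$, so uniform-in-$\varepsilon$ higher moment bounds $\sup_{\varepsilon\in(0,1]}\EE\sup_{t\le T}|X^\varepsilon_t|^q<\infty$ are needed; these are obtained from the monotonicity consequence \eref{ConA12} and the linear growth of $\sigma$ in $|x|$. A secondary delicate point is the jump martingale $M^\Phi$: its compensator involves the switching rate $\varepsilon^{-1}K(X^\varepsilon)$ from \eref{Finte K}, so $\langle M^\Phi\rangle_t$ is $O(\varepsilon^{-1})$ in natural units, which, multiplied by the prefactor $\varepsilon^2$, gives exactly the $O(\varepsilon)$ bound needed to match the target strong rate $\varepsilon^{p/2}$. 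Ensuring that this accounting survives taking the $p$-th moment of the supremum on $[0,T]$ is where the estimates from Theorem \ref{Poisson} combined with the sharpened monotonicity \eref{ConA11} (rather than only \eref{REM1}) are essential.
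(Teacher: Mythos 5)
Your overall strategy coincides with the paper's: reduce to the fluctuation integral $\int_0^t\langle Y^\varepsilon_s,\,b(X^\varepsilon_s,\alpha^\varepsilon_s)-\bar b(X^\varepsilon_s)\rangle\,ds$ via monotonicity of $\bar b$ and BDG, then trade that integral for $O(\varepsilon)$ boundary, drift and martingale terms by pairing $Y^\varepsilon$ with the Poisson solution $\Phi$ and applying It\^o's formula. However, there is a genuine gap at the central step. Under the hypotheses of the theorem (\ref{A1}--\ref{A3} only), $b$ is merely locally Lipschitz in $x$ and $Q$ is merely Lipschitz, so $F(x,i)=b(x,i)-\bar b(x)$ and $Q$ are not $C^1$, let alone $C^2$. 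Consequently the regularity estimates \eref{E2}--\eref{E3} of Theorem \ref{Poisson}, which you invoke, are not available: they require $Q\in C^1(\RR^n,\RR^{m_0}\otimes\RR^{m_0})$ and $F\in C^1$ (resp.\ $C^2$). All that A1--A3 give is that $\Phi$ is locally Lipschitz with polynomial growth, via \eref{Elip}. Your application of Dynkin's/It\^o's formula to $\Phi(X^\varepsilon_t,\alpha^\varepsilon_t)$, and in particular the term $\mathcal L_1\Phi$ containing $\partial_x^2\Phi$, is therefore not justified as written; the argument would only go through under the stronger smoothness hypothesis ${\bf H}_2$, which the theorem does not assume.

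The paper closes this gap with a mollification: it replaces $\Phi$ by a smoothed $\Phi_m$ (Lemma \ref{App}), for which $\|\Phi-\Phi_m\|_\infty\le C(1+|x|^k)m^{-1}$ while $\|\partial_x^2\Phi_m\|_\infty\le C(1+|x|^k)m$, applies It\^o to $\langle\Phi_m(X^\varepsilon_t,\alpha^\varepsilon_t),Y^\varepsilon_t\rangle$, and arrives at a bound of the form $C_{p,T}(1+|x|^{k_p})(m^{-p}+m^p\varepsilon^p+\varepsilon^{p/2})$; choosing $m=[\varepsilon^{-1/2}]$ then recovers the rate $\varepsilon^{p/2}$. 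This balancing act is the essential missing idea in your proposal: without it the second-derivative term in the It\^o expansion has no meaning, and with a naive smoothing it would destroy the rate unless the approximation parameter is tuned to $\varepsilon$. The rest of your accounting (the $\varepsilon^{-1}K(X^\varepsilon)$ compensator of the jump martingale producing exactly an $O(\varepsilon)$ contribution after the $\varepsilon^2$ prefactor, the uniform moment bounds, and the passage from $p=2$ to general $p$) is consistent with the paper's Step 3.
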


Secondly, the weak convergence \eref{WeakA1} is researched. It is important to note that $\sigma(x,i)$ may depend on $i\in\mathbb{S}$. In this case, the averaged equation is the following:
\begin{eqnarray}\left\{\begin{array}{l}
\displaystyle d\bar{X}_{t}=\bar{b}( \bar{X}_{t})dt+\bar{\sigma}(\bar X_t)dW_t,\\
\bar{X}_{0}=x\in \mathbb{R}^n,\end{array}\right. \label{AR21}
\end{eqnarray}
where $\bar{b}(x):=\sum_{j\in\mathbb{S}}b(x,j)\mu^{x}_{j}$ and
$$\bar{\sigma}(x):=\left[\overline{\sigma\sigma^{\ast}}(x)\right]^{1/2}:=\Big[\sum_{j\in\mathbb{S}}\sigma(x,j)\sigma^{\ast}(x,j)\mu^{x}_{j}\Big]^{1/2},$$
i.e., $\bar{\sigma}(x)\bar{\sigma}(x)=\overline{\sigma\sigma^{\ast}}(x)$.

We present the following two weak convergence results.

\begin{theorem}(\textbf{Weak convergence of $X^{\varepsilon}$ in $C([0,T],\RR^n)$})\label{main result 1.2}
Suppose that \ref{A1}-\ref{A4} hold. Then for $x\in\RR^n,\alpha\in\mathbb{S}$ and $T>0$,  $\{X^{\varepsilon}\}_{\varepsilon>0}$ converges weakly to the solution $\{\bar{X}_t\}_{t\leq T}$ of Eq.\eref{AR21} in $C([0,T];\RR^n)$ as $\varepsilon\to 0$.
\end{theorem}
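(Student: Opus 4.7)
The plan is to combine the Stroock--Varadhan martingale problem method with the Poisson equation corrector supplied by Theorem \ref{Poisson}. The argument splits into three standard steps: (i) tightness of $\{X^{\varepsilon}\}_{\varepsilon\in(0,1]}$ in $C([0,T];\RR^n)$; (ii) identification of every subsequential weak limit as a solution of the martingale problem for the generator $\bar{\cL}$ of \eqref{AR21}; and (iii) well-posedness of that martingale problem so that the limit is unique.

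For step (i), I would first establish the uniform moment estimate $\sup_{\varepsilon\in(0,1]}\EE\sup_{0\leq t\leq T}|X^{\varepsilon}_{t}|^{p}\leq C_{p,T}(1+|x|^{p})$ for every $p\geq 2$, which follows from \eqref{ConA12}, \eqref{ConA14} and It\^o's formula applied to $|X^{\varepsilon}_t|^p$ (the switching term is harmless since $X^{\varepsilon}$ has continuous paths). Combining this with the growth bounds in \ref{A1} and a BDG argument then yields $\EE|X^{\varepsilon}_t-X^{\varepsilon}_s|^p\leq C_{p,T}(1+|x|^{k_p})|t-s|^{p/2}$ for all $0\leq s<t\leq T$, so Kolmogorov's tightness criterion delivers tightness in $C([0,T];\RR^n)$.

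For step (ii), fix $\phi\in C_b^2(\RR^n)$ and set
$$F(x,i):=b(x,i)\cdot\nabla\phi(x)+\tfrac{1}{2}\mathrm{tr}\!\left[\sigma(x,i)\sigma^{*}(x,i)\nabla^{2}\phi(x)\right]-\bar{\cL}\phi(x),$$
where $\bar{\cL}\phi(x):=\bar{b}(x)\cdot\nabla\phi(x)+\tfrac{1}{2}\mathrm{tr}[\overline{\sigma\sigma^{*}}(x)\nabla^{2}\phi(x)]$. By construction $F$ satisfies the centring condition \eqref{CenCon}, so Theorem \ref{Poisson} provides a solution $\Phi(x,i)$ of $-Q(x)\Phi(x,\cdot)(i)=F(x,i)$ with the regularity bounds \eqref{E1}--\eqref{E3}. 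Applying Dynkin's formula to the corrected test function $\phi^{\varepsilon}(x,i):=\phi(x)+\varepsilon\Phi(x,i)$ along the Markov process $(X^{\varepsilon},\alpha^{\varepsilon})$, the potentially divergent term $\varepsilon^{-1}Q(x)\phi^{\varepsilon}(x,\cdot)(i)=Q(x)\Phi(x,\cdot)(i)=-F(x,i)$ precisely cancels the $i$-dependence of the diffusion generator, yielding
$$\phi(X^{\varepsilon}_{t})=\phi(x)+\int_{0}^{t}\bar{\cL}\phi(X^{\varepsilon}_{s})\,ds+M^{\varepsilon}_{t}+R^{\varepsilon}_{t},$$
where $M^{\varepsilon}$ is an $\{\cF_{t}\}$-martingale and
$$R^{\varepsilon}_{t}=-\varepsilon\bigl[\Phi(X^{\varepsilon}_{t},\alpha^{\varepsilon}_{t})-\Phi(x,\alpha)\bigr]+\varepsilon\int_{0}^{t}\!\!\bigl[b\cdot\partial_{x}\Phi+\tfrac{1}{2}\mathrm{tr}(\sigma\sigma^{*}\partial_{x}^{2}\Phi)\bigr](X^{\varepsilon}_{s},\alpha^{\varepsilon}_{s})\,ds.$$
The estimates \eqref{E1}--\eqref{E3}, together with \ref{A1}, \ref{A3}, and the polynomial growth of $\nabla Q,\nabla^{2} Q$, show that $\Phi,\partial_{x}\Phi,\partial_{x}^{2}\Phi$ grow at most polynomially in $x$; combined with the uniform moment bounds of step (i) this yields $\EE\sup_{t\leq T}|R^{\varepsilon}_{t}|\to 0$ as $\varepsilon\to 0$.

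For step (iii), selecting a weakly convergent subsequence $X^{\varepsilon_{k}}\Rightarrow X^{*}$ and passing to the limit in the identity above (using continuity of $\bar{\cL}\phi$, uniform integrability from the moment bounds, and a standard martingale-closure argument) shows that $\phi(X^{*}_{t})-\phi(x)-\int_{0}^{t}\bar{\cL}\phi(X^{*}_{s})\,ds$ is a martingale for every $\phi\in C_{b}^{2}(\RR^n)$, so $X^{*}$ solves the martingale problem for $\bar{\cL}$. The non-degeneracy \eqref{NonD} transfers to $\overline{\sigma\sigma^{*}}$, while the Lipschitz regularity of $\bar{b}$ and $\overline{\sigma\sigma^{*}}$ follows from \eqref{E2} applied to $F(x,i)=b(x,i)-\bar{b}(x)$ and from the analogue of Lemma \ref{PMbarX}; these together make \eqref{AR21} well-posed and the martingale problem uniquely solvable. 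Consequently the whole family $\{X^{\varepsilon}\}$ converges weakly to $\bar{X}$. The main difficulty is the control of the remainder $R^{\varepsilon}$: one has to check that the polynomial growth of $\partial_{x}\Phi$ given by \eqref{E2} is compatible with the polynomial growth of $b$ and the moment bounds on $X^{\varepsilon}$, which ultimately rests on the uniform exponential ergodicity \eqref{ergodicity} and the Lipschitz bound \eqref{LipQ} on $Q$.
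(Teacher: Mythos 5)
Your overall architecture (tightness, perturbed test function via the Poisson corrector, identification through the martingale problem, uniqueness from well-posedness of \eqref{AR21}) is exactly the route the paper takes, and steps (i) and (iii) are essentially the paper's Proposition \ref{pro4.6} and Lemma \ref{PMbarX2}. However, there is a genuine gap in step (ii): you apply Dynkin's/It\^o's formula to $\phi^{\varepsilon}(x,i)=\phi(x)+\varepsilon\Phi(x,i)$ and your remainder $R^{\varepsilon}_{t}$ contains the term $\varepsilon\int_{0}^{t}\tfrac12\mathrm{tr}\bigl(\sigma\sigma^{*}\,\partial_{x}^{2}\Phi\bigr)\,ds$, so you need $\Phi(\cdot,i)\in C^{2}$ with polynomially growing second derivative. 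But under \ref{A1}--\ref{A4} the estimate \eqref{E3} is not available: it requires $Q\in C^{2}$ and $F\in C^{2}$ in $x$, whereas \ref{A4} only gives $Q\in C^{1}$ and \ref{A1} only gives local Lipschitz continuity of $b$ (so your $F$ is at best $C^{1}_{p}$, and only after you upgrade the test functions from $C^{2}_{b}$ to $C^{3}_{b}$, since $F$ involves $\nabla\phi$ and $\nabla^{2}\phi$ and \eqref{E2} needs $F\in C^{1}$). As written, the claim that ``$\partial_{x}^{2}\Phi$ grows at most polynomially'' is unsupported, and the It\^o expansion of $\varepsilon\Phi(X^{\varepsilon}_{t},\alpha^{\varepsilon}_{t})$ is not justified.

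The paper closes exactly this gap with the mollification Lemma \ref{App}: one replaces $\Phi$ by a smooth $\Phi_{m}$ satisfying $\|\Phi-\Phi_{m}\|_{\infty}\lesssim(1+|x|^{k})m^{-1}$ and $\|\partial_{x}^{2}\Phi_{m}\|_{\infty}\lesssim(1+|x|^{k})m$, applies It\^o's formula to $\Phi_{m}$ (which also requires tracking the compensated jump martingale coming from the switching, a term absent from your $R^{\varepsilon}_{t}$), and then balances the approximation error $m^{-1}$ against the second-derivative blow-up $m\varepsilon$ by choosing $m=[\varepsilon^{-1/2}]$. Incorporating this mollification step (and restricting to $C^{3}_{b}$ test functions so that $F\in C^{1}_{p}$ and \eqref{E2} applies) would make your argument complete; the rest of your proposal matches the paper's proof.
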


\begin{theorem}(\textbf{Weak convergence of $X^{\varepsilon}_t$ in $\RR^n$})\label{main result 1.3}
Suppose that \ref{A1}-\ref{A4} and ${\color{blue} \bf H}_{4}$ hold. Then for $x\in\RR^n,\alpha\in\mathbb{S}$, $T>0$ and $\phi\in C^4_p(\RR^n)$, there exists $k>0$ such that
\begin{eqnarray}
\sup_{0\leq t\leq T}|\EE \phi(X^{\varepsilon}_t)-\EE \phi(\bar{X}_t)|\leq C\left(1+|x|^{k}\right)\varepsilon, \quad \forall \varepsilon\in(0,1],\label{AVWO}
\end{eqnarray}
where $\bar{X}$ is the solution of Eq.\eref{AR21}, $C>0$ depends on $T$ and $\phi$.
\end{theorem}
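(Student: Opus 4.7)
The plan is to combine the Kolmogorov backward equation for the averaged process $\bar X$ with the Poisson equation framework provided by Theorem~\ref{Poisson}, which is the standard "double expansion" route to sharp weak orders in multi-scale problems.

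First I would introduce $u(t,x):=\mathbb{E}\phi(\bar X_t^x)$ and set $\tilde u(s,x):=u(t-s,x)$, so that $\partial_s\tilde u+\bar{\cL}\tilde u=0$ with $\tilde u(t,\cdot)=\phi$, where $\bar{\cL}f(x)=\langle\bar b(x),\nabla f(x)\rangle+\tfrac12\,\mathrm{Tr}\bigl(\overline{\sigma\sigma^{\ast}}(x)\nabla^{2}f(x)\bigr)$. Under ${\bf H}_{4}$ together with \ref{A4} (which gives uniform non-degeneracy of $\overline{\sigma\sigma^{\ast}}$ and hence a smooth matrix square-root $\bar\sigma$), I expect to show that $\bar b,\bar\sigma\in C^{4}_{p}(\RR^{n})$ — this will be proved using the regularity of $\mu^{x}$ inherited from the Poisson equation bounds — and therefore that $\tilde u\in C^{1,4}([0,t]\times\RR^{n})$ with derivatives of polynomial growth in $x$ uniformly in $s$.

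Next, applying Itô's formula to $\tilde u(s,X^{\varepsilon}_{s})$ on $[0,t]$ (the jumps of $\alpha^{\varepsilon}$ contribute nothing since $\tilde u$ is independent of the discrete component) and using the backward equation for $\tilde u$ yields
\begin{equation*}
\mathbb{E}\phi(X^{\varepsilon}_{t})-\mathbb{E}\phi(\bar X_{t})=\mathbb{E}\int_{0}^{t}F(s,X^{\varepsilon}_{s},\alpha^{\varepsilon}_{s})\,ds,
\end{equation*}
where $F(s,x,i):=\cL_{i}\tilde u(s,\cdot)(x)-\bar{\cL}\tilde u(s,\cdot)(x)$ and $\cL_{i}$ is the frozen generator of the slow equation in regime $i$. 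The centering condition $\sum_{i}F(s,x,i)\mu^{x}_{i}=0$ follows at once from the very definitions of $\bar b$ and $\overline{\sigma\sigma^{\ast}}$. Hence Theorem~\ref{Poisson} produces $\Phi(s,x,i)$ solving $-Q(x)\Phi(s,x,\cdot)(i)=F(s,x,i)$, with explicit estimates on $\Phi,\partial_{x}\Phi,\partial^{2}_{x}\Phi$ (and, by differentiating the defining integral in $s$ and using $\partial_{s}F$, on $\partial_{s}\Phi$) of polynomial growth in $x$.

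Then I would apply Itô's formula to $\varepsilon\Phi(s,X^{\varepsilon}_{s},\alpha^{\varepsilon}_{s})$; the generator of the joint process being $\cL_{i}+\varepsilon^{-1}Q(x)$, the Poisson equation exactly cancels the $O(\varepsilon^{-1})$ term and one gets
\begin{equation*}
\mathbb{E}\!\int_{0}^{t}\!F(s,X^{\varepsilon}_{s},\alpha^{\varepsilon}_{s})ds
=\varepsilon\Phi(0,x,\alpha)-\varepsilon\mathbb{E}\Phi(t,X^{\varepsilon}_{t},\alpha^{\varepsilon}_{t})+\varepsilon\mathbb{E}\!\int_{0}^{t}\!\bigl[\partial_{s}\Phi+\cL_{\alpha^{\varepsilon}_{s}}\Phi\bigr](s,X^{\varepsilon}_{s},\alpha^{\varepsilon}_{s})ds.
\end{equation*}
Combining the two identities and using the uniform-in-$\varepsilon$ moment bounds $\sup_{\varepsilon\in(0,1],\,t\le T}\mathbb{E}|X^{\varepsilon}_{t}|^{p}\le C_{p,T}(1+|x|^{p})$ (which follow from \ref{A1} via the standard argument based on \eqref{ConA12} and \eqref{ConA14}) together with the polynomial-growth bounds on $\Phi$ and its derivatives delivers \eqref{AVWO}.

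The main obstacle I expect is the chain of regularity estimates needed to feed into the Poisson equation: one must show that $\bar\sigma=[\overline{\sigma\sigma^{\ast}}]^{1/2}$ inherits four derivatives (where the matrix square root and non-degeneracy from \ref{A4} are essential), that the associated Kolmogorov semigroup $u(t,\cdot)$ inherits four spatial derivatives with polynomial-growth control uniform on $[0,T]$, and that the resulting $F$ has enough regularity in $(s,x,i)$ for \eqref{E3} to yield bounds on $\partial^{2}_{x}\Phi$ — these last bounds being what is actually used when $\cL_{\alpha^{\varepsilon}_{s}}\Phi$ is integrated against $X^{\varepsilon}$. Once these regularity facts are established, the cancellation produced by the Poisson equation makes the final estimate essentially automatic.
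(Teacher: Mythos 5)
Your proposal follows essentially the same route as the paper's proof: the backward Kolmogorov equation for $\bar X$ with the regularity of $u$ coming from $\bar b,\bar\sigma\in C^4_p$ (via ${\bf H}_4$, \ref{A4} and the square-root lemma), the identity $\EE\phi(X^\vare_t)-\EE\phi(\bar X_t)=\EE\int_0^t F^t(s,X^\vare_s,\alpha^\vare_s)\,ds$ with the centered $F^t$, and then the $s$-dependent Poisson equation plus It\^o's formula applied to $\vare\tilde\Phi^t(s,X^\vare_s,\alpha^\vare_s)$ to cancel the $O(\vare^{-1})$ term and extract the order-$\vare$ remainder. The decomposition, the key estimates (including the $\partial_s\tilde\Phi^t$ bound obtained by differentiating the defining integral), and the final moment argument all match the paper's proof of Theorem \ref{main result 1.3}.
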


\subsubsection{Central limit theorem}
The convergences considered in this part are based on the strong convergence \eref{main 1}, it requires $\sigma(x,i)=\sigma(x)$. Since the order of $X^{\varepsilon}$ to $\bar{X}$ is $1/2$ in $C([0,T],\RR^n)$, we can consider the following two types of convergences of  $(X^{\varepsilon}-\bar{X})/\sqrt{\varepsilon}$ by using this and Poisson equation technique, which can be interpreted in the context of the central limit theorem.

\begin{theorem} (\textbf{Weak convergence of $(X^{\varepsilon}-\bar{X})/\sqrt{\varepsilon}$ in $C([0,T],\RR^n)$})\label{main result 4}
Suppose $\sigma(x,i)=\sigma(x)$ and \ref{A1}-\ref{A3} and that ${\color{blue}{\bf H}_{2}}$ hold. Then for $x\in\RR^n,\alpha\in\mathbb{S}$ and $T>0$, $Z^{\varepsilon}$ converges weakly to the solution $Z$ of the following equation
\begin{eqnarray}\label{limE}
dZ_t= \!\!\!\!\!\!\!\!&& \nabla\bar{b}(\bar{X}_t)\cdot Z_tdt+\left[\nabla\sigma(\bar{X}_t)\cdot Z_t\right]dW_t
+\Theta(\bar{X}_t)d\tilde{W}_t,~~~Z_0=0
\end{eqnarray}
in $C([0,T];\RR^n)$  as $\varepsilon\to 0$, where
$$\Theta(x):=\left[\overline{F\otimes \Phi+(F\otimes \Phi)^*}(x)\right]^{1/2}:=
\Big[\sum_{j\in\mathbb{S}}((F\otimes \Phi)(x,j)+(F\otimes \Phi)^*(x,j))\mu^{x}_j\Big]^{1/2}$$
 with $F(x,j):=b(x,j)-\bar{b}(x)$, $\Phi$ is defined by \eref{SPE} and
$$(F\otimes\Phi)(x,j):=((F\otimes\Phi)_{kl}(x,j))_{\{1\leq k,l\leq n\}}:=\left(F_k(x,\alpha)\Phi^l(x,j)\right)_{\{1\leq k,l\leq n\}}.$$
$\bar{X}$ is the solution of Eq.\eref{AR1} and $\tilde{W}$ is a $n$-dimensional standard Brownian motion on probability space $(\Omega, \mathcal{F},\mathbb{P})$ and independent of $W$.
\end{theorem}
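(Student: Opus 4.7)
The plan is to prove weak convergence in $C([0,T],\RR^n)$ in two stages: first establish tightness of the family $\{Z^\varepsilon\}_{\varepsilon\in(0,1]}$, then identify every subsequential limit as the unique solution of \eqref{limE}. Strong well-posedness of \eqref{limE} is straightforward since its coefficients are linear in $Z$ with $\bar{X}$-dependent matrices $\nabla\bar{b}(\bar{X}_s),\nabla\sigma(\bar{X}_s),\Theta(\bar{X}_s)$ inheriting regularity from $b,\sigma,Q$ via Theorem~\ref{Poisson} under ${\bf H}_2$, so uniqueness of the limit forces the whole family to converge. For tightness, the uniform moment bound $\EE\sup_{t\leq T}|Z^\varepsilon_t|^p\leq C_{p,T}$ is immediate from \eqref{main 1}, and a Kolmogorov-type increment estimate $\EE|Z^\varepsilon_t-Z^\varepsilon_s|^p\leq C|t-s|^{p/2}$, obtained by revisiting the decomposition below, yields tightness in $C([0,T],\RR^n)$.

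For the identification I would apply Theorem~\ref{Poisson} to $F(x,i):=b(x,i)-\bar{b}(x)$, which satisfies the central condition \eqref{CenCon}, and use the resulting $\Phi\in C^2_p(\RR^n\times\mathbb{S},\RR^n)$ solving $-Q(x)\Phi(x,\cdot)(i)=F(x,i)$. Starting from
\begin{equation*}
Z^\varepsilon_t=\frac{1}{\sqrt{\varepsilon}}\int_0^t F(X^\varepsilon_s,\alpha^\varepsilon_s)\,ds+\frac{1}{\sqrt{\varepsilon}}\int_0^t[\bar{b}(X^\varepsilon_s)-\bar{b}(\bar{X}_s)]\,ds+\frac{1}{\sqrt{\varepsilon}}\int_0^t[\sigma(X^\varepsilon_s)-\sigma(\bar{X}_s)]\,dW_s,
\end{equation*}
the last two terms are handled by Taylor expansion: their linear parts converge to $\int_0^t\nabla\bar{b}(\bar{X}_s)Z^\varepsilon_s\,ds$ and $\int_0^t\nabla\sigma(\bar{X}_s)Z^\varepsilon_s\,dW_s$, while the quadratic remainders contribute at order $\sqrt{\varepsilon}$ in $L^p$ thanks to \eqref{main 1}. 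For the first (nontrivial) term, applying the It\^o--Dynkin formula to $\sqrt{\varepsilon}\,\Phi(X^\varepsilon_t,\alpha^\varepsilon_t)$ and using $-Q\Phi=F$ yields
\begin{equation*}
\frac{1}{\sqrt{\varepsilon}}\int_0^t F(X^\varepsilon_s,\alpha^\varepsilon_s)\,ds=-\sqrt{\varepsilon}\bigl[\Phi(X^\varepsilon_t,\alpha^\varepsilon_t)-\Phi(x,\alpha)\bigr]+\sqrt{\varepsilon}\int_0^t\cL_{\mathrm{diff}}\Phi\,ds+\sqrt{\varepsilon}M^{\varepsilon,W}_t+\sqrt{\varepsilon}M^{\varepsilon,J}_t,
\end{equation*}
where $\cL_{\mathrm{diff}}$ is the second-order part of the generator acting only in $x$, $M^{\varepsilon,W}_t:=\int_0^t\nabla_x\Phi(X^\varepsilon_s,\alpha^\varepsilon_s)\sigma(X^\varepsilon_s)\,dW_s$, and $M^{\varepsilon,J}_t$ is the compensated pure-jump martingale associated with $\alpha^\varepsilon$. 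By the bounds \eqref{E1}--\eqref{E3} the first two right-hand terms are $O(\sqrt{\varepsilon})$ in $L^p$, while $\sqrt{\varepsilon}M^{\varepsilon,W}_t$ has quadratic variation of order $\varepsilon$, so all three are negligible.

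The main obstacle is then passing to the limit in the jump martingale $\sqrt{\varepsilon}M^{\varepsilon,J}_t$. Its predictable quadratic variation equals
\begin{equation*}
\varepsilon\langle M^{\varepsilon,J}\rangle_t=\int_0^t\sum_{j\in\mathbb{S}}q_{\alpha^\varepsilon_s,j}(X^\varepsilon_s)\bigl[\Phi(X^\varepsilon_s,j)-\Phi(X^\varepsilon_s,\alpha^\varepsilon_s)\bigr]\bigl[\Phi(X^\varepsilon_s,j)-\Phi(X^\varepsilon_s,\alpha^\varepsilon_s)\bigr]^{*}ds,
\end{equation*}
and I would show it converges in probability to $\int_0^t\Theta\Theta^{*}(\bar{X}_s)\,ds$ using (i) the Kipnis--Varadhan-type identity $\sum_i\mu^x_i\sum_{j}q_{ij}(x)[\Phi(x,j)-\Phi(x,i)][\Phi(x,j)-\Phi(x,i)]^{*}=\sum_j\mu^x_j[(F\otimes\Phi)(x,j)+(F\otimes\Phi)^{*}(x,j)]=\Theta\Theta^{*}(x)$, obtained by expanding the square, integrating against $\mu^x$, and using $\mu^xQ(x)=0$ together with $-Q\Phi=F$; and (ii) an averaging argument for the fast variable $\alpha^\varepsilon$ justified by the uniform ergodicity \eqref{ergodicity} and the strong convergence $X^\varepsilon\to\bar{X}$. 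Combined with the orthogonality $\langle M^{\varepsilon,J},W\rangle\equiv 0$, the martingale central limit theorem then provides, on a possibly enlarged probability space, an $n$-dimensional standard Brownian motion $\tilde{W}$ independent of $W$ with $\sqrt{\varepsilon}M^{\varepsilon,J}_\cdot\Rightarrow\int_0^\cdot\Theta(\bar{X}_s)\,d\tilde{W}_s$. Passing to the limit in the full decomposition then shows every subsequential limit $Z$ satisfies \eqref{limE}, and strong uniqueness of this linear SDE identifies the limit. The most delicate technical point will be justifying the averaging of $\varepsilon\langle M^{\varepsilon,J}\rangle_t$ when $\mathbb{S}$ is countably infinite, for which \eqref{Finte K} together with the polynomial-growth bounds on $\Phi$ from Theorem~\ref{Poisson} will be essential.
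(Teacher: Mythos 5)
Your route is genuinely different from the paper's. The paper first replaces $Z^\varepsilon$ by an auxiliary process $\eta^\varepsilon$ with $\EE\sup_t|Z^\varepsilon_t-\eta^\varepsilon_t|^2\le C\varepsilon$ (Proposition \ref{pro5.1}), proves tightness of $\eta^\varepsilon$, and then identifies the limit through the martingale problem: for test functions $U\in C^4_b(\RR^{2n})$ it integrates by parts against the corrector $\Phi$ in the pairing $\langle\Phi,\partial_z U\rangle$, so the diffusion coefficient $\overline{F\otimes\Phi+(F\otimes\Phi)^*}$ emerges from the term $\partial^2_zU\cdot F\otimes\Phi$ followed by a second Poisson equation. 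You instead apply It\^o to $\sqrt{\varepsilon}\,\Phi(X^\varepsilon_t,\alpha^\varepsilon_t)$ directly and run the martingale CLT on the surviving compensated jump martingale. Your carr\'e-du-champ identity is correct: writing $\Gamma(\Phi^k,\Phi^l)=Q(\Phi^k\Phi^l)-\Phi^kQ\Phi^l-\Phi^lQ\Phi^k$, integrating against $\mu^x$ kills the first term and $-Q\Phi=F$ gives exactly $\overline{(F\otimes\Phi)+(F\otimes\Phi)^*}=\Theta\Theta^*$; this is a cleaner explanation of where the new noise comes from than the paper offers. The price is that you must invoke a functional MCLT for convergence to a mixed-Gaussian (stochastic-integral) limit jointly with $W$, plus a Kurtz--Protter-type argument to pass to the limit in $\int_0^t\nabla\sigma(\bar X_s)Z^\varepsilon_s\,dW_s$; both are standard but you only gesture at them, whereas the martingale-problem formulation sidesteps them.

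The one step that fails as stated is tightness. You claim $\EE|Z^\varepsilon_t-Z^\varepsilon_s|^p\le C|t-s|^{p/2}$ ``obtained by revisiting the decomposition below,'' but that decomposition contains the jump martingale $\sqrt{\varepsilon}M^{\varepsilon,J}$, whose jumps have size of order $\sqrt{\varepsilon}$ and intensity of order $\varepsilon^{-1}$. Kunita's inequality then yields, for $p>2$, an increment bound of the form $C|t-s|^{p/2}+C\varepsilon^{p/2-1}|t-s|$, and the second term is \emph{not} $O(|t-s|^{1+\delta})$ uniformly in $\varepsilon$ when $|t-s|\lesssim\varepsilon$ (for a compensated Poisson process of rate $\varepsilon^{-1}$ scaled by $\sqrt{\varepsilon}$ one computes $\EE|M_t-M_s|^4=\varepsilon(t-s)+3(t-s)^2$, which exceeds $C(t-s)^2$ for $t-s<\varepsilon$). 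So the Kolmogorov criterion cannot be applied term by term; the jump martingale alone is not $C$-tight. Either you must exploit the exact cancellation of jumps between $-\sqrt{\varepsilon}\Phi(X^\varepsilon_t,\alpha^\varepsilon_t)$ and $\sqrt{\varepsilon}M^{\varepsilon,J}_t$ (keeping them grouped, which the decomposition as written destroys), or you must do what the paper does in Proposition \ref{pro3.2}: prove tightness via the Aldous stopping-time criterion in $\mathbb{D}([0,T];\RR^n)$ (which only needs second moments of increments, for which $\varepsilon\EE\langle M^{\varepsilon,J}\rangle$ is harmless) and then upgrade to $C([0,T];\RR^n)$ using path continuity and continuity of the limit. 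The paper explicitly flags this obstruction. The rest of your outline --- the Taylor remainders of order $\sqrt{\varepsilon}$, the averaging of $\varepsilon\langle M^{\varepsilon,J}\rangle_t$ using \eref{ergodicity}, \eref{Finte K} and the polynomial bounds on $\Phi$ from Theorem \ref{Poisson}, and uniqueness for the linear limit equation --- is sound.
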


\begin{theorem}(\textbf{Weak convergence of $(X^{\varepsilon}_t-\bar{X}_t)/\sqrt{\varepsilon}$ in $\RR^n$})\label{main result 5}
Suppose that $\sigma(x,i)=\sigma(x)$ and \ref{A1}-\ref{A3} and ${\color{blue}{\bf H}_{3}}$ hold. Then for $x\in\RR^n,\alpha\in\mathbb{S}$,$T>0$, and $\phi\in C^4_p(\RR^n)$, there exists $k>0$ such that
\begin{eqnarray}
\sup_{0\leq t\leq T}|\EE \phi(Z^{\varepsilon}_t)-\EE \phi(Z_t)|\leq C\left(1+|x|^{k}\right)\varepsilon^{1/2},\quad \varepsilon\in(0,1],\label{CLTWO}
\end{eqnarray}
where $Z$ is the unique solution of \eref{limE}, constant $C$ depends on $T$ and $\phi$.
\end{theorem}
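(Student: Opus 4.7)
My plan is to combine the Kolmogorov backward equation for the limit process $Z$ with the Poisson-equation regularity from Theorem \ref{Poisson}, following the scheme that has become standard for deriving optimal weak rates in multi-scale stochastic systems. The starting point is the SDE
\[
dZ^\vare_t = \frac{1}{\sqrt{\vare}}\bigl[b(X^\vare_t,\al^\vare_t)-\bar b(\bar X_t)\bigr]\,dt + \frac{1}{\sqrt{\vare}}\bigl[\si(X^\vare_t)-\si(\bar X_t)\bigr]\,dW_t,
\]
which I would decompose using $b(X^\vare_t,\al^\vare_t)-\bar b(\bar X_t) = F(X^\vare_t,\al^\vare_t) + [\bar b(X^\vare_t)-\bar b(\bar X_t)]$, where $F(x,i):=b(x,i)-\bar b(x)$ satisfies the central condition \eqref{CenCon}. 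I would then Taylor-expand $\bar b(X^\vare_t)-\bar b(\bar X_t)$ and $[\si(X^\vare_t)-\si(\bar X_t)]^2/\vare$ around $\bar X_t$ to the second order, so that the $O(1)$ pieces reproduce the drift $\nabla\bar b(\bar X_t)\cdot Z^\vare_t$ and the diffusion coefficient $\nabla\si(\bar X_t)\cdot Z^\vare_t$ appearing in the limit equation \eqref{limE}.

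Fix $t\in(0,T]$ and set $v(s,z):=\EE[\phi(Z_t)\mid Z_s=z]$, viewing the deterministic trajectory $\bar X_\cdot$ as a parameter; then $v$ solves the linear backward PDE $\partial_s v+\mathcal A_s v=0$ with $v(t,z)=\phi(z)$, where $\mathcal A_s$ is the generator of \eqref{limE} at time $s$. Under ${\bf H}_3$ the coefficients $\bar b,\bar\si,\Theta$ are sufficiently smooth (for $\Theta$ this uses Theorem \ref{Poisson} applied to $F$), and combined with the linear-in-$z$ structure of \eqref{limE}, a variation-of-constants argument for the spatial derivatives of $Z$ yields $v\in C^{1,4}$ with $\nabla^k_z v$ of at most polynomial growth in $z$ for $k\leq 4$, uniformly on $s\in[0,t]$. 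Applying It\^o's formula to $v(s,Z^\vare_s)$ and invoking the backward equation then gives
\[
\EE\phi(Z^\vare_t)-\EE\phi(Z_t) = \EE\int_0^t[\mathcal L^\vare_s v-\mathcal A_s v](s,Z^\vare_s)\,ds,
\]
where $\mathcal L^\vare_s$ is the formal drift-plus-quadratic-variation operator of $Z^\vare$. Using the uniform moment bounds $\EE|Z^\vare_s|^p\leq C_p(1+|x|^{k_p})$ inherited from Theorem \ref{main result 1} to absorb the Taylor remainders, the right-hand side collapses, modulo $O(\sqrt{\vare})$ contributions, to the two residual terms
\[
\EE\int_0^t\frac{1}{\sqrt{\vare}}F(X^\vare_s,\al^\vare_s)\cdot\nabla_z v\,ds\quad\text{and}\quad -\frac{1}{2}\EE\int_0^t\Theta\Theta^*(\bar X_s):\nabla^2_z v\,ds.
\]

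The singular drift is absorbed through a corrector: I apply It\^o's formula to $\sqrt{\vare}\,\nabla_z v(s,Z^\vare_s)\cdot\Phi(X^\vare_s,\al^\vare_s)$, with $\Phi$ the Poisson solution from Theorem \ref{Poisson}. The key cancellation is that the $\vare^{-1}Q$ piece of the generator of $(X^\vare,\al^\vare)$ contributes exactly $-\vare^{-1/2}\nabla_z v\cdot F$, annihilating the divergent integrand; boundary and martingale contributions are $O(\sqrt{\vare})$ by \eqref{E1}--\eqref{E3} together with moment estimates on $(X^\vare,Z^\vare)$. The expansion still produces an $O(1)$ cross-drift correction $\EE\int_0^t F(X^\vare_s,\al^\vare_s)\otimes\Phi(X^\vare_s,\al^\vare_s):\nabla^2_z v\,ds$. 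I would dispose of it by invoking Theorem \ref{Poisson} a \emph{second} time on the centered function $(x,i)\mapsto F(x,i)\otimes\Phi(x,i)-\overline{F\otimes\Phi}(x)$, and iterating the same corrector trick: this converts the $O(1)$ term into $\EE\int_0^t\overline{F\otimes\Phi}(X^\vare_s):\nabla^2_z v\,ds$ at a further $O(\sqrt{\vare})$ cost. By the symmetry of $\nabla^2_z v$ this equals $\tfrac12\EE\int_0^t\Theta\Theta^*(X^\vare_s):\nabla^2_z v\,ds$, and replacing $X^\vare_s$ by $\bar X_s$ contributes another $O(\sqrt{\vare})$. This exactly cancels the remaining residual term and delivers the rate $\vare^{1/2}$.

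The main obstacle will be the careful bookkeeping of this iterated Poisson/It\^o expansion, and in particular the verification that the auxiliary Poisson solution attached to $F\otimes\Phi-\overline{F\otimes\Phi}$ inherits polynomial-growth derivative estimates in $x$; this is the reason ${\bf H}_3$ (one more order of smoothness than is needed in the averaging result of Theorem \ref{main result 1.3}) is required. A secondary technical point is establishing polynomial-in-$z$ control of $\nabla^k_z v$ for $k\leq 4$ uniformly on $[0,T]$, which reduces to moment bounds for the variational equation associated with \eqref{limE} along $\bar X$ and hence to growth estimates on $\bar b,\bar\si,\Theta$ and their derivatives coming from ${\bf H}_3$ combined with Theorem \ref{Poisson}.
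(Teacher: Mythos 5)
Your overall architecture --- backward Kolmogorov equation plus an iterated Poisson corrector, first for $F=b-\bar b$ and then for the centered matrix $F\otimes\Phi-\overline{F\otimes\Phi}$ --- is exactly the paper's, and your identification of the two residual terms and of the role of ${\bf H}_3$ is correct. The one genuine gap is at the foundation: you define $v(s,z):=\EE[\phi(Z_t)\mid Z_s=z]$ ``viewing the deterministic trajectory $\bar X_\cdot$ as a parameter'' and assert $\partial_s v+\mathcal{A}_s v=0$ with $\mathcal{A}_s$ the generator of \eref{limE}. But under the hypotheses of the theorem $\bar X$ solves \eref{AR1} with a genuine $\sigma(\bar X_t)\,dW_t$ term, so it is a diffusion, not a deterministic curve; moreover \eref{limE} is driven by the \emph{same} Brownian motion $W$. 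Hence $Z$ alone is not Markov, the conditional expectation $\EE[\phi(Z_t)\mid Z_s=z]$ is not a deterministic function of $(s,z)$, and no closed backward PDE in $z$ alone holds. The correct object is the two-variable function $u(t,x,z)=\EE\phi(Z^{x,z}_t)$ solving \eref{KE} with the joint generator \eref{Generator}, which contains the cross term $\mathcal{L}_3 u=\sum_{i,j}\partial_{x_i}\partial_{z_j}u\,[\sigma(\nabla\sigma\cdot z)^{\ast}]_{ij}(x)$ coming from the shared noise; your one-variable formulation drops this term, and the subsequent bookkeeping (in particular the matching of the $O(1)$ pieces of $\mathcal{L}^{\vare}_s$ against the limit generator) does not close without it. Once this is repaired, the rest of your scheme goes through; the regularity input you need is exactly Lemma \ref{Lemma 5.3}.

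A secondary, non-fatal difference: you work directly with $Z^{\vare}$ and absorb the Taylor remainders of $\bar b(X^{\vare})-\bar b(\bar X)$ and $\sigma(X^{\vare})-\sigma(\bar X)$ inside the It\^o expansion, whereas the paper first replaces $Z^{\vare}$ by the linearized auxiliary process $\eta^{\vare}$ of \eref{e6}, shows $\EE\big(\sup_{t\le T}|Z^{\vare}_t-\eta^{\vare}_t|^2\big)\le C_T(1+|x|^k)\vare$ (Proposition \ref{pro5.1}), and pays this price once at the very end through the polynomial-growth Lipschitz bound on $\phi$. Both routes work; the paper's keeps the It\^o computation cleaner at the cost of one extra a priori estimate.
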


\subsection{Examples}

In this subsection, we give several concrete examples to illustrate our convergence orders are optimal. For simplicity, we only consider the 1D case.

\begin{example}\label{Ex2.9}
Considering:
\begin{eqnarray*}
dX^{\varepsilon}_t=b(\alpha^{\varepsilon}_t)dt+dW_t,\quad X^{\varepsilon}_0=x\in\mathbb{R},
\end{eqnarray*}
$\alpha^{\varepsilon}$ is a $\{1,2\}$-valued Markov chain with the  generator
$$
Q^{\varepsilon}=\varepsilon^{-1}Q=\varepsilon^{-1}\left(
                       \begin{array}{cc}
                         -1 & 1 \\
                         1 & -1 \\
                       \end{array}
                     \right).
$$
For $\alpha^{\varepsilon}_0=1$, $b(1)=1$ and $b(2)=-1$, It is easy to check that $Q$ admits a unique invariant probability measure $\mu=\left(1/2,1/2\right)$. The transition probability $P^\varepsilon_t$ of $\alpha^{\varepsilon}$ is the following:
$$
P^{\varepsilon}_t=\frac12\left(
  \begin{array}{cc}
    1+e^{-2t/\varepsilon} & 1-e^{-2t/\varepsilon}  \\
   1-e^{-2t/\varepsilon}  &  1+e^{-2t/\varepsilon} \\
  \end{array}
\right).
$$
Since the averaged coefficient $\bar{b}(x)=0$, the corresponding averaged equation is given by
$$\bar{X}_t=x+W_t.$$
Refer to \cite[Example 3.4]{LLSW2023}, we have
\begin{eqnarray*}
\EE\left|X^{\varepsilon}_t-\bar{X}_t\right|^2=\varepsilon t+\frac{\varepsilon^2}{2}\left(e^{-\frac{2t}{\varepsilon}}-1\right),
\end{eqnarray*}
which implies that the strong convergence order $1/2$ in \eref{main 1} is optimal.

Furthermore, we have
$$
Z^{\varepsilon}_t=\frac{X^{\varepsilon}_t-\bar{X}_t}{\sqrt{\varepsilon}}=\frac{1}{\sqrt{\varepsilon}}\int^t_0 b(\alpha^{\varepsilon}_s) ds.
$$
It is easy to check that the corresponding limiting  process $Z_t=\tilde{W}_t$ via the limiting equation \eref{limE}, where $\tilde{W}$ is a $1$D standard Brownian motion. Taking $\phi(x)=x$, we have
\begin{eqnarray*}
|\EE\phi(Z^{\varepsilon}_t)-\EE\phi(Z_t)|=\!\!\!\!\!\!\!\!&&|\EE Z^{\varepsilon}_t-\EE Z_t|=\Big|\frac{1}{\sqrt{\varepsilon}}\int^t_0 \EE b(\alpha^{\varepsilon}_s) ds\Big|\\
=\!\!\!\!\!\!\!\!&&\Big|\frac{1}{\sqrt{\varepsilon}}\int^t_0 \left[\PP(\alpha^{\varepsilon}_s=1)-\PP(\alpha^{\varepsilon}_s=2) \right]ds\Big|\\
=\!\!\!\!\!\!\!\!&&\Big|\frac{1}{\sqrt{\varepsilon}}\int^t_0 \left[p^{\varepsilon}_{11}(s)-p^{\varepsilon}_{12}(s) \right]ds\Big|\\
=\!\!\!\!\!\!\!\!&&\frac{1}{\sqrt{\varepsilon}}\int^t_0 e^{-2s/\varepsilon}ds=\frac{\sqrt{\varepsilon}}{2}\left(1-e^{-\frac{2t}{\varepsilon}}\right),
\end{eqnarray*}
which implies that the weak convergence order $1/2$ in \eref{CLTWO} is optimal.
\end{example}

\begin{remark}
Choosing $\phi(x)=x^2$, we have
\begin{eqnarray*}
|\EE\phi(Z^{\varepsilon}_t)-\EE\phi(Z_t)|=\left|\EE (Z^{\varepsilon}_t)^2-\EE Z_t^2\right|=\Big|\EE\Big(\frac{X^{\varepsilon}_t-\bar{X}_t}{\sqrt{\varepsilon}}\Big)^2-t\Big|=\frac{\varepsilon}{2}\left(1-e^{-\frac{2t}{\varepsilon}}\right).
\end{eqnarray*}
This implies the weak convergence order is $1$ which is bigger than $1/2$. Hence, the weak order may vary depending on the class $\mathbb{C}$ under consideration.
\end{remark}

\begin{example}\label{Ex2.11}
Considering:
\begin{eqnarray}
dX^{\varepsilon}_t=b(\alpha^{\varepsilon}_t)dt+\sigma(\alpha^{\varepsilon}_t)dW_t,\quad X^{\varepsilon}_0=x\in\mathbb{R},\label{Ex2}
\end{eqnarray}
where $\alpha^{\varepsilon}$ is the same Markov chain as in Example \ref{Ex2.9}, and assume $\sigma(1)=b(1)=1$, $\sigma(2)=b(2)=-1$. Then the averaged coefficients $\bar{b}(x)=0$ and $\bar{\sigma}(x)=1$, the corresponding averaged equation is given by
$$\bar{X}_t=x+W_t.$$
Taking $\phi(x)=x$, we have
\begin{eqnarray*}
\left|\EE\phi(X^{\varepsilon}_t)-\EE\phi(\bar{X}_t)\right|=\left|\EE X^{\varepsilon}_t-\EE \bar{X}_t\right|=\Big|\int^t_0 \EE b(\alpha^{\varepsilon}_s) ds\Big|=\frac{\varepsilon}{2}\left(1-e^{-\frac{2t}{\varepsilon}}\right).
\end{eqnarray*}
This means that the weak convergence order $1$ in \eref{AVWO} is optimal.
\end{example}

\begin{remark}\label{R5.4}
Let $b\equiv0$ in Eq.\eref{Ex2}, then
\begin{eqnarray*}
\EE\left|X^{\varepsilon}_t-\bar{X}_t\right|^2=\!\!\!\!\!\!\!\!&&\EE\Big|\int^t_0 \left[\sigma(\alpha^{\varepsilon}_s)-1\right] dW_s\Big|^2\\
=\!\!\!\!\!\!\!\!&&\EE\int^t_0 |\sigma(\alpha^{\varepsilon}_s)-1|^2ds=\int^t_0 2\left(1-e^{-\frac{2s}{\varepsilon}}\right)ds=2t+\varepsilon\left(e^{-\frac{2t}{\varepsilon}}-1\right).
\end{eqnarray*}
This shows that there is no strong convergence in this case.
\end{remark}

\section{Poisson equation associated to a Markov chain}

In this section, we focus on the study of Poisson equation \eref{PE1}. To do this, we will now investigate the differentiability of $P^{x}_t$ with respect to $x$, which is crucial in examining the regularity of the Poisson equation's solution.

\begin{proposition}\label{DQ}
Suppose \ref{A2} and $Q\in C^{2}(\RR^n,\RR^{m_0}\otimes\RR^{m_0})$. Then $P^x_t$ is twice differentiable with respective to $x$, more precisely, for any $f$ on $\mathbb{S}$ with $\|f\|_{\infty}\leq 1$ and $k,l\in\{1,2\ldots,n\}$,
\begin{equation}
\sup_{t\geq 0,i\in\mathbb{S}}|\partial_{x_k}P^{x}_tf(i)|\leq C\|\partial_{x_k}Q(x)\|_{\ell},\label{FDP}
\end{equation}
\begin{equation}
\sup_{t\geq 0,i\in\mathbb{S}}|\partial_{x_k}\partial_{x_l}P^{x}_tf(i)|\leq C\left(\|\partial_{x_k}Q(x)\|_{\ell}\|\partial_{x_l}Q(x)\|_{\ell}+\|\partial_{x_k}\partial_{ x_l}Q(x)\|_{\ell}\right).\label{FDP2}
\end{equation}
Moreover, there exists $C>0$ such that for any $x\in\RR^n$, $i,i'\in\mathbb{S}$ and $t\geq 0$,
\begin{equation}
|\partial_{x_k}P^x_tf(i)-\partial_{x_k}P^x_tf(i')|\leq C  \|\partial_{x_k}Q(x)\|_{\ell}e^{-\lambda t}t ,\label{SDP1}
\end{equation}
\begin{eqnarray}
|\partial_{x_k}\partial_{x_l}P^x_tf(i)\!-\!\partial_{x_k}\partial_{x_l}P^x_tf(i')|
\!\le\! C\left (\|\partial_{x_k}Q(x)\|_{\ell} \|\partial_{x_l}Q(x)\|_{\ell}\!+\!\|\partial_{x_k}\partial_{ x_l}Q(x)\|_{\ell}\right) e^{-\lambda t}(1\!+\!t^2). \label{SDP2}
\end{eqnarray}
\end{proposition}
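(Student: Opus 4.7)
The plan is to combine the Kolmogorov backward equation $\partial_t P^x_t = Q(x) P^x_t$ with Duhamel's principle, while exploiting two facts repeatedly. First, since $Q(x)\mathbf{1} = 0$, every partial derivative $\partial^\beta Q(x)$ also annihilates constants, so any function acted on by such a derivative may be freely shifted by any constant. Second, \eqref{ergodicity} yields both the forward contraction $\|P^x_s f - \mu^x(f)\mathbf{1}\|_\infty \leq Ce^{-\lambda s}\|f\|_\infty$ and the two-point contraction $|P^x_r h(i) - P^x_r h(i')| \leq Ce^{-\lambda r}\|h\|_\infty$, which follows from $\|p^x_{i\cdot}(r)-p^x_{i'\cdot}(r)\|_{\mathrm{var}} \leq 2Ce^{-\lambda r}$.

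I would first derive the Duhamel representation
$$\partial_{x_k}P^x_t = \int_0^t P^x_{t-s}\,(\partial_{x_k} Q(x))\,P^x_s\,ds$$
by letting $h\to 0$ in the integral equation satisfied by $P^{x+he_k}_t - P^x_t$. Applied to $f$ and centering $P^x_s f$ by $\mu^x(f)\mathbf{1}$ (legal by the first fact), the integrand has sup-norm bounded by $\|\partial_{x_k} Q(x)\|_\ell\cdot Ce^{-\lambda s}\|f\|_\infty$, since $P^x_{t-s}$ is Markov and therefore an $\ell^\infty$-contraction; integration in $s$ yields \eqref{FDP}. For \eqref{SDP1} I invoke the two-point contraction on the outer $P^x_{t-s}$ applied to the same centered integrand:
$$|\partial_{x_k}P^x_tf(i)-\partial_{x_k}P^x_tf(i')| \leq \int_0^t Ce^{-\lambda(t-s)}\cdot\|\partial_{x_k}Q(x)\|_\ell\cdot Ce^{-\lambda s}\|f\|_\infty\,ds;$$
the two exponentials merge into $e^{-\lambda t}$ and leave a time integral of length $t$, producing the factor $t$ in \eqref{SDP1}.

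For the second-order estimates I differentiate the Duhamel formula once more in $x_l$, producing three integrals
\begin{align*}
\partial_{x_l}\partial_{x_k}P^x_t &= \int_0^t (\partial_{x_l}P^x_{t-s})(\partial_{x_k}Q(x))P^x_s\,ds + \int_0^t P^x_{t-s}(\partial_{x_l}\partial_{x_k}Q(x))P^x_s\,ds\\
&\quad + \int_0^t P^x_{t-s}(\partial_{x_k}Q(x))(\partial_{x_l}P^x_s)\,ds.
\end{align*}
In each of the three terms the outer $\partial^\beta Q(x)$-factor has vanishing row sums, so I center the operator to its right. For the first and third integrals I substitute the Duhamel formula for the inner derivative and apply \eqref{FDP} combined with centering; this gives \eqref{FDP2}. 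To obtain \eqref{SDP2} I repeat the two-point contraction argument on each of the three terms, but for the first and third the inner derivative already carries a time-integral, so combining it with the outer integral produces an extra factor of $t$; this is the origin of the $(1+t^2)$ in \eqref{SDP2}.

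The main technical difficulty is the bookkeeping: the centerings and the two directions of use of \eqref{ergodicity} (forward on $P^x_s f$ and backward on $P^x_{t-s}h(i)-P^x_{t-s}h(i')$) must be arranged so that every time integral runs against a product of decaying exponentials; a naive bound that only uses $\|P^x\|\le 1$ would lose the crucial prefactor $e^{-\lambda t}$ required on the right-hand sides of \eqref{SDP1}--\eqref{SDP2}. A secondary issue when $m_0=\infty$ is to justify that $x\mapsto P^x_t$ is genuinely $C^2$ in the first place; this follows from a dominated-convergence argument on the difference quotient, using \eqref{Finte K} and the uniform $\ell$-bound on $\partial_{x_k}Q$.
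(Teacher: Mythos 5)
Your proposal is correct and follows essentially the same route as the paper: the paper derives the same Duhamel/variation-of-constants formula $\partial_{x_k}P^x_t f=\int_0^t P^x_s[\partial_{x_k}Q(x)]P^x_{t-s}f\,ds$ (yours is the time-reversed but equivalent form), centers the factor to the right of $\partial^\beta Q(x)$ by $\mu^x(f)$ using that derivatives of $Q$ annihilate constants, and combines the forward ergodic decay with the two-point total-variation contraction $\|p^x_{i\cdot}(s)-p^x_{i'\cdot}(s)\|_{\mathrm{var}}\le Ce^{-\lambda s}$ to merge the exponentials into $e^{-\lambda t}$ with the factors $t$ and $1+t^2$. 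The three-term decomposition of the second derivative and the way the inner first-derivative estimates feed into the second-order bounds are also exactly as in the paper.
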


\begin{proof}
By the integration by parts formula for the semigroups $P^x_t$ and $P^y_t$ (see \cite[Theorem 13.40]{C2004}), for any $f$ on $\mathbb{S}$ with $\|f\|_{\infty}\leq 1$, $x,y\in\RR^n$ and $t\geq 0$, we have
\begin{eqnarray*}
P^y_tf-P^x_tf=\int^t_0 P^{y}_s[Q(y)-Q(x)]P^{x}_{t-s}fds.
\end{eqnarray*}
$q_{ij}(x)\in C^2_{b}(\RR^n)$ yields that, for any $k\in\{1,2\ldots,n\}$ and $i\in\mathbb{S}$,
\begin{eqnarray}
\partial_{x_k}P^{x}_tf(i)=\int^t_0 P^{x}_s\left[\partial_{x_k}Q(x)\right]P^{x}_{t-s}f(i) ds.\label{Ppartial x}
\end{eqnarray}
By the condition \eref{ergodicity}, we can obtain
\begin{eqnarray*}
|\partial_{x_k}P^x_tf(i)|=\!\!\!\!\!\!&&\Big|\int^t_0 P^{x}_s\left[\partial_{x_k}Q(x)\right]\left[P^{x}_{t-s}(f-\mu^x(f) )\right](i)ds\Big|\\
\leq\!\!\!\!\!\!&&\int^t_0 \sup_{j\in\mathbb{S}}|\partial_{x_k}Q(x)\left[P^{x}_{t-s}(f-\mu^x(f))\right](j) |ds\\
\leq\!\!\!\!\!\!&&\|\partial_{x_k}Q(x)\|_{\ell}\int^t_0 \sup_{j\in\mathbb{S}}|P^{x}_{t-s}f(j)-\mu^x(f) |ds\\
\leq\!\!\!\!\!\!&&\|\partial_{x_k}Q(x)\|_{\ell}\int^t_0 \sup_{j\in\mathbb{S}}\|p^{x}_{j\cdot}(t-s)-\mu^x\|_{\text{var}}ds\\
\leq\!\!\!\!\!\!&&C \|\partial_{x_k}Q(x)\|_{\ell}\int^t_0e^{-\lambda (t-s)} ds\leq \frac{C}{\lambda}\|\partial_{x_k}Q(x)\|_{\ell}
\end{eqnarray*}
and for any $i,i'\in\mathbb{S}$,
\begin{eqnarray}
&&|\partial_{x_k}P^x_tf(i)-\partial_{x_k}P^x_tf(i')|\nonumber\\
=\!\!\!\!\!\!&&\Big|\int^t_0 \big\{P^{x}_s[\partial_{x_k}Q(x)][P^{x}_{t-s}(f-\mu^x(f)) ](i)-P^{x}_s\left[\partial_{x_k}Q(x)\right][P^{x}_{t-s}(f-\mu^x(f))](i')\big\}ds\Big|\nonumber\\
\leq\!\!\!\!\!\!&&\int^t_0 \|p^x_{i\cdot}(s)-p^x_{i'\cdot}(s)\|_{\text{var}}\sup_{j\in \mathbb{S}}|\partial_{x_k}Q(x)\left[P^{x}_{t-s}(f-\mu^x(f))\right](j) |ds\nonumber\\
\leq\!\!\!\!\!\!&&\|\partial_{x_k}Q(x)\|_{\ell}\int^t_0 \|p^x_{i\cdot}(s)-p^x_{i'\cdot}(s)\|_{\text{var}}\sup_{j\in\mathbb{S}}\|p^{x}_{j\cdot}(t-s)-\mu^x \|_{\text{var}}ds\nonumber\\
\leq\!\!\!\!\!\!&&C\|\partial_{x_k}Q(x)\|_{\ell}\int^t_0 e^{-\lambda s}e^{-\lambda (t-s)} ds\leq C\|\partial_{x_k}Q(x)\|_{\ell} e^{-\lambda t}t.\label{DPpartialx}
\end{eqnarray}

Similarly, by \eref{Ppartial x} we have for any $k,l\in\{1,2\ldots,n\}$,
\begin{eqnarray*}
\partial_{x_k} \partial_{ x_l}P^{x}_tf(i)=\!\!\!\!\!\!&&\int^t_0 \partial_{x_l}P^{x}_s\left[\partial_{x_k}Q(x)\right]P^{x}_{t-s}f(i)ds+\int^t_0 P^{x}_s\left[\partial_{x_k}\partial_{ x_l}Q(x)\right]P^{x}_{t-s}f(i)ds\nonumber\\
&&+\int^t_0 P^{x}_s\left[\partial_{x_k}Q(x)\right] \partial_{x_l}P^{x}_{t-s}f(i) ds.\label{Ppartial xx}
\end{eqnarray*}
Thus using \eref{ergodicity} and \eref{DPpartialx}, we can obtain
\begin{eqnarray*}
|\partial_{x_k}\partial_{ x_l }P^x_tf(i)|\leq\!\!\!\!\!\!&&C \|\partial_{x_l}Q(x)\|_{\ell}\int^t_0 \sup_{j\in \mathbb{S}}\left|\left[\partial_{x_k}Q(x)\right]\left[P^{x}_{t-s}(f -\mu^x(f))\right](j)\right|ds\\
&&+C\int^t_0 \sup_{j\in\mathbb{S}}\left|\left[\partial_{x_k}\partial_{ x_l}Q(x)\right]\left[P^{x}_{t-s}(f-\mu^x(f))\right](j)\right| ds\\
&&+C\int^t_0 \sup_{j\in\mathbb{S}}\left|\left[\partial_{x_k}Q(x)\right] \left[\partial_{x_l}P^{x}_{t-s}f-\partial_{x_l}P^{x}_{t-s}f(i))\right](j)\right|ds\\
\leq\!\!\!\!\!\!&&C\left (\|\partial_{x_l}Q(x)\|_{\ell} \|\partial_{x_k}Q(x)\|_{\ell}+\|\partial_{x_k}\partial_{ x_l}Q(x)\|_{\ell}\right)\int^t_0 \!\!\sup_{j\in \mathbb{S}}\|p^{x}_{j\cdot}(t-s) -\mu^x\|_{\text{var}}ds\\
&&+C \|\partial_{x_k}Q(x)\|_{\ell}\int^t_0 \sup_{j\in\mathbb{S}}|\partial_{x_l}P^{x}_{t-s}f(j) -\partial_{x_l}P^{x}_{t-s}f(i)|ds\\
\leq\!\!\!\!\!\!&&C\left (\|\partial_{x_l}Q(x)\|_{\ell} \|\partial_{x_k}Q(x)\|_{\ell}+\|\partial_{x_k}\partial_{ x_l}Q(x)\|_{\ell}\right)\int^t_0 e^{-\lambda s} (1+ s)ds\\
\leq \!\!\!\!\!\!&& \frac{2C}{\lambda}\left (\|\partial_{x_l}Q(x)\|_{\ell} \|\partial_{x_k}Q(x)\|_{\ell}+\|\partial_{x_k}\partial_{ x_l}Q(x)\|_{\ell}\right)
\end{eqnarray*}
and for any $i,i'\in\mathbb{S}$,
\begin{eqnarray*}
&&|\partial_{x_k} \partial_{ x_l}P^x_tf(i)-\partial_{x_k} \partial_{ x_l}P^x_tf(i')|\nonumber\\
\leq\!\!\!\!\!\!&&C\|\partial_{x_l}Q(x)\|_{\ell}\int^t_0 s e^{-\lambda s}\sup_{j\in\mathbb{S}}\big|\left[\partial_{x_k}Q(x)\right]\big[P^{x}_{t-s}(f -\mu^x(f))\big](j)\big|ds\\
&&+C\int^t_0 \|p^x_{i\cdot}(s)-p^x_{i'\cdot}(s)\|_{\text{var}}\sup_{j\in\mathbb{S}}\left|\left[\partial_{x_k}\partial_{ x_l}Q(x)\right]\left[P^{x}_{t-s}(f -\mu^x(f))\right](j)\right| ds\\
&&+C\int^t_0 \|p^x_{i\cdot}(s)-p^x_{i'\cdot}(s)\|_{\text{var}}\sup_{j\in \mathbb{S}}\left|\left[\partial_{x_k}Q(x)\right]\left[\partial_{x_l}P^{x}_{t-s}f -\partial_{x_l}P^{x}_{t-s}f(i))\right](j)\right| ds\\
\leq\!\!\!\!\!\!&&C\|\partial_{x_l}Q(x)\|_{\ell} \|\partial_{x_k}Q(x)\|_{\ell}\int^t_0 s e^{-\lambda s}\sup_{j\in \mathbb{S}}\|p_{j\cdot}^{x}(t-s)-\mu^x\|_{\text{var}}ds\\
&&+C\|\partial_{x_k}\partial_{ x_l}Q(x)\|_{\ell}\int^t_0 \|p^x_{i\cdot}(s)-p^x_{i'\cdot}(s)\|_{\text{var}}\sup_{j\in \mathbb{S}}\|p_{j\cdot}^{x}(t-s)-\mu^x\|_{\text{var}}ds\\
&&+C\|\partial_{x_k}Q(x)\|_{\ell}\int^t_0 \|p^x_{i\cdot}(s)-p^x_{i'\cdot}(s)\|_{\text{var}}\sup_{j\in \mathbb{S}}\left|\partial_{x_l}P^{x}_{t-s}f(j) -\partial_{x_l}P^{x}_{t-s}f(i)\right| ds\\
\leq\!\!\!\!\!\!&&C\left (\|\partial_{x_l}Q(x)\|_{\ell} \|\partial_{x_k}Q(x)\|_{\ell}+\|\partial_{x_k}\partial_{ x_l}Q(x)\|_{\ell}\right)\int^t_0 (1+s)e^{-\lambda s}e^{-\lambda(t-s)} \left[1+(t-s)\right] ds\\
\leq\!\!\!\!\!\!&&C\left (\|\partial_{x_l}Q(x)\|_{\ell} \|\partial_{x_k}Q(x)\|_{\ell}+\|\partial_{x_k}\partial_{ x_l}Q(x)\|_{\ell}\right) e^{-\lambda t}\left(1+t^2\right).
\end{eqnarray*}
The proof is complete.
\end{proof}

\begin{remark}
Suppose that \ref{A2} holds, $Q\in C^m(\RR^n,\RR^{m_0}\otimes\RR^{m_0})$ with $\sum^m_{k=1}\|\nabla^k Q(x)\|_{\ell}\leq C$ for some $m\in\mathbb{N}_{+}$. Then $P^x_t$ is $m$-th differentiable with respective to $x$, more precisely, there exists $C>0$ such that for $f$ on $\mathbb{S}$ with $\|f\|_{\infty}\leq 1$,
\begin{eqnarray}
\sup_{t\geq 0,i\in\mathbb{S}}\|\partial_x^j P^x_tf(i)\|\leq C,\quad j=1,\ldots,m,\label{Partialb}
\end{eqnarray}
Similarly, if $Q\in C^m_p(\RR^n,\RR^{m_0}\otimes\RR^{m_0})$ for some $m\in\mathbb{N}_{+}$. Then there exist $C,k>0$ such that
\begin{eqnarray}
\sup_{t\geq 0,i\in\mathbb{S}}\|\partial_x^j P^x_tf(i)\|\leq C(1+|x|^k),\quad j=1,\ldots,m.\label{PartialP}
\end{eqnarray}
\end{remark}

Now, we give the proof of our first main result on Poisson equation.

\vspace{-2mm}
~~\\
\textbf{Proof of Theorem \ref{Poisson}}: The detailed proofs are divided into three steps.

%\vspace{2mm}
\textbf{Step 1:} We shall prove that $\Phi(x,i)$ is a solution of the Poisson equation (\ref{PE1}).
Put $P^x_t$ is the corresponding transition probability matrix of $Q(x)$. It is sufficient to show that for any $l=1,2,\ldots,n$,
$$
\lim_{s\rightarrow 0}\frac{P^x_s\Phi^{l}(x,\cdot)(i)-\Phi^l(x,i)}{s}=-F^l(x,i).
$$
The Chapman-Kolmogorov equation implies that
\begin{eqnarray*}
P^x_s\Phi^{l}(x,\cdot)(i)-\Phi^l(x,i)=\!\!\!\!\!\!\!\!&&\sum_{j\in\mathbb{S}}p^x_{ij}(s)\Phi^{l}(x,j)-\Phi^{l}(x,i)\\
=\!\!\!\!\!\!\!\!&&\sum_{j\in\mathbb{S}}p^x_{ij}(s)\int^{\infty}_0 \sum_{k\in\mathbb{S}}F^l(x, k)p^x_{jk}(t)dt-\sum_{k\in\mathbb{S}}\int^{\infty}_0 F^l(x, k)p^x_{ik}(t)dt\\
=\!\!\!\!\!\!\!\!&&\sum_{k\in\mathbb{S}}\int^{\infty}_0 F^l(x, k)p^x_{ik}(t+s)dt-\sum_{k\in\mathbb{S}}\int^{\infty}_0 F^l(x, k)p^x_{ik}(t)dt\\
=\!\!\!\!\!\!\!\!&&\sum_{k\in\mathbb{S}}\int^{\infty}_s F^l(x, k)p^x_{ik}(t)dt-\sum_{k\in\mathbb{S}}\int^{\infty}_0 F^l(x, k)p^x_{ik}(t)dt\\
=\!\!\!\!\!\!\!\!&&-\sum_{k\in\mathbb{S}}\int^{s}_0 F^l(x, k)p^x_{ik}(t)dt.
\end{eqnarray*}
This yields
\begin{eqnarray*}
\lim_{s\rightarrow 0}\frac{P^x_s\Phi^{l}(x,\cdot)(i)-\Phi^l(x,i)}{s}=-\lim_{s\rightarrow 0}\sum_{k\in\mathbb{S}}F^l(x, k)p^x_{ik}(s)=-F^l(x,i).
\end{eqnarray*}

By \eref{CenCon} and \eref{ergodicity}, we have
\begin{eqnarray*}
|\Phi(x,i)|\leq\!\!\!\!\!\!\!\!&&\int_{0}^{\infty}|\EE F(x,\alpha_{t}^{x,i})|dt\\
=\!\!\!\!\!\!\!\!&&\int_{0}^{\infty}\left|P^x_t F(x,\cdot)(i)-\mu^x(F(x,\cdot))\right|dt\\
\leq\!\!\!\!\!\!\!\!&&\int_{0}^{\infty}\|F(x,\cdot)\|_{\infty}\|p^x_{i\cdot}(t)-\mu^x\|_{\rm{var}}dt\\
\leq\!\!\!\!\!\!\!\!&&C\|F(x,\cdot)\|_{\infty}\int_{0}^{\infty}e^{-\lambda t}dt\\
\leq\!\!\!\!\!\!\!\!&&\frac{C}{\lambda}\|F(x,\cdot)\|_{\infty}.
\end{eqnarray*}

Put $\hat F(x, i, t):=\EE F(x, \alpha^{x,i}_t)$, $\tilde F_{t_0}(x, i, t):=\hat F(x,i, t)-\hat F(x, i, t+t_0)$. The ergodicity condition \eref{ergodicity} implies
$$\lim_{t_0\rightarrow +\infty} \tilde F_{t_0}(x, i, t)=\EE F(x,\alpha^{x,i}_t).$$

In order to prove \eref{E2} and \eref{E3}, it is sufficient to prove that there exists $C>0$ such that for any $t_0>0$, $ t> 0$, $x\in\RR^{n}$, $k,l\in\{1,2,\ldots,n\}$ and $i\in\mathbb{S}$,
\begin{eqnarray}
| \partial _{x_k}\tilde{F}_{t_0}(x,i,t)|\le C\left[\|F(x,\cdot)\|_{\infty}\|\partial_{x}Q(x)\|_{\ell}+\|\partial_{x}F(x,\cdot)\|_{\infty}\right]e^{-\lambda t}t, \label{E2.}
\end{eqnarray}
\begin{eqnarray}
\!\!\!\!\!\!| \partial_{x_k}\partial_{x_l}\tilde{F}_{t_0}(x,i,t)|\le \!\!\!\!\!\!\!\!&& C\left[\|F(x,\cdot)\|_{\infty}\left (\|\partial_{x}Q(x)\|_{\ell}+\|\partial_{x}Q(x)\|^2_{\ell}+\|\partial^2_{x}Q(x)\|_{\ell}\right)\right.\nonumber\\
&&\quad\left.+\|\partial_{x}F(x,\cdot)\|_{\infty}\|\partial_{x}Q(x)\|_{\ell}+\|\partial^2_x F(x,\cdot)\|_{\infty}\right]e^{-\lambda t}(1\vee t^2), \label{E3.}
\end{eqnarray}
which will be proved in \text{Step 2} and \text{Step 3}, respectively.
\vspace{0.3cm}

\textbf{Step 2:} {\it The proof of \eref{E2.}}. By the Markov property,
\begin{eqnarray*}
\tilde F_{t_0}(x,i, t)=\!\!\!\!\!\!\!\!&& \hat F(x, i, t)\!-\!\EE F(x, \alpha^{x,i}_{t+t_0})\!=\! \hat F(x,i, t)\!-\!\EE \!\left[\EE\!\left(F(x,\alpha^{x,i}_{t+t_0})|\mathcal{F}_{t_0}\right)\right]\\
=\!\!\!\!\!\!\!\!&&\hat F(x, i, t)\!-\!\EE \hat F(x, \alpha^{x,i}_{t_0},t),
\end{eqnarray*}
which implies that, for any $k\in \{1,2,\ldots,n\}$,
\begin{eqnarray}
\partial_{x_k}\tilde F_{t_0}(x,i, t)= \partial_{x_k} \hat F(x, i, t)-\EE \partial_{x_k}\hat {F}(x, \alpha^{x,i}_{t_0},t)- \left\{\partial_{x_k}\EE \left[\hat F(z,\alpha^{x,i}_{t_0},t) \right]\right\}|_{z=x}.\label{F4.12}
%=:\!\!\!\!\!\!\!\!\!&&I_1+I_2,\label{partial x}
\end{eqnarray}

For any $i,j\in\mathbb{S}$,
$$\partial_{x_k}\hat F(x,i, t)=\partial_{x_k}\EE F(x,\alpha^{x,i}_t)=\EE\partial_{x_k}F(x,\alpha^{x,i}_t)+\left\{\partial_{x_k}\EE F(z,\alpha^{x,i}_t)\right\}|_{z=x}$$
implies
\begin{eqnarray*}
&&|\partial_{x_k} \hat{F}(x,i,t)-\partial_{x_k} \hat{F}(x,j,t)|\\
\leq\!\!\!\!\!\!\!\!&&|\EE\partial_{x_k}F(x,\alpha^{x,i}_t)-\EE\partial_{x_k}F(x,\alpha^{x,j}_t)|+\left|\partial_{x_k}\EE F(z,\alpha^{x,i}_t)|_{z=x}-\partial_{x_k}\EE F(z,\alpha^{x,j}_t)|_{z=x}\right|.
\end{eqnarray*}
Then by \eref{SDP1} and \eref{ergodicity}, we have
\begin{eqnarray}
&&|\partial_{x_k} \hat{F}(x,i,t)-\partial_{x_k} \hat{F}(x,j,t)|\nonumber\\
\leq\!\!\!\!\!\!\!\!&&\|\partial_{x}F(x,\cdot)\|_{\infty} \|p^{x}_{i\cdot}(t)-p^{x}_{j\cdot}(t)\|_{\text{var}}+\left|\partial_{x_k}P^x_t F(z,\cdot)(i)|_{z=x}-\partial_{x_k}P^x_tF(z,\cdot)(j)|_{z=x}\right|\nonumber\\
\leq\!\!\!\!\!\!\!\!&&C\|\partial_{x}F(x,\cdot)\|_{\infty}e^{-\lambda t}+C\|F(x,\cdot)\|_{\infty}\|\partial_{x_k}Q(x)\|_{\ell} e^{-\lambda t}t \nonumber\\
\leq\!\!\!\!\!\!\!\!&& C\!\left[\|F(x,\cdot)\|_{\infty}\|\partial_{x_k}Q(x)\|_{\ell}+\|\partial_{x}F(x,\cdot)\|_{\infty}\!\right]e^{-\lambda t}(1\vee t).\label{phatF}
\end{eqnarray}
Hence, we have
\begin{eqnarray}
|\partial_{x_k} \hat F(x, i, t)\!-\!\EE \partial_{x_k}\hat {F}(x, \alpha^{x,i}_{t_0},t)|\!=\!C\left[\|F(x,\cdot)\|_{\infty}\|\partial_{x_k}Q(x)\|_{\ell}\!+\!\|\partial_{x}F(x,\cdot)\|_{\infty}\!\right]e^{-\lambda t}(1\!\vee\! t).\nonumber\\\label{I11}
\end{eqnarray}

By the central condition \eref{CenCon}, we get
\begin{eqnarray}
|\hat F(x,l,t)|\!=\!|P^x_t F(x,\cdot)(l)\!-\!\mu^x(F(x,\cdot)|\!\le\! \|F(x,\cdot)\|_{\infty}\|p^x_{l\cdot}(t)\!-\!\mu^{x}\|_{\text{var}}\!\le\! C\|F(x,\cdot)\|_{\infty}e^{-\lambda t}.\nonumber\\ \label{hatF}
\end{eqnarray}
\eref{FDP} and \eref{hatF} imply that, for any $t_0\geq 0$,
\begin{eqnarray}
\left|\left\{\partial_{x_k}\EE \left[\hat F(z,\alpha^{x,i}_{t_0},t) \right]\right\}|_{z=x}\right|=\!\!\!\!\!\!\!\!&&|\partial_{x_k}P^x(t_0)\hat F(x,\cdot,t)(i)|\leq C\|\hat F(x,\cdot,t)\|_{\infty}\|\partial_{x_k}Q(x)\|_{\ell}\nonumber\\
\leq\!\!\!\!\!\!\!\!&&C\|F(x,\cdot)\|_{\infty}\|\partial_{x_k}Q(x)\|_{\ell}e^{-\lambda t}.\label{I12}
\end{eqnarray}
\eref{F4.12}, \eref{I11} and \eref{I12} yield that \eref{E2.} holds.

%\vspace{2mm}
\textbf{Step 3:}  { \it The proof of \eref{E3.}}. \eref{F4.12} and the chain rule yields that, for $k,l\in \{1,2,\ldots,n\}$,
\begin{eqnarray*}
\partial_{x_k}\partial_{ x_l}\tilde{F}_{t_0}(x,i,t)\!\!\!\!\!\!\!\!&&=\left[\partial_{x_k}\partial_ {x_l}\hat{F}(x,i,t)-\EE\partial_{x_k}\partial_{ x_l}\hat{F}(x,\alpha_{t_0}^{x,i},t)\right]\\
&&~~~~-\left\{\partial_{x_l}\EE[\partial_{x_k}\hat{F}(z,\alpha_{t_0}^{x,i},t)]\right\}|_{z=x}-\partial_{x_l}\left\{\partial_{x_k}\EE \left[\hat F(z,\alpha^{x,i}_{t_0},t) \right]|_{z=x}\right\}\\
\!\!\!\!\!\!\!\!&&=:J_1+J_2+J_3.
\end{eqnarray*}

(i) For $J_1$,  note that
$$\partial_{x_k}\partial_{x_l}\hat{F}(x,i,t)\!=\!\EE\partial_{x_k}\partial_{ x_l} F(x,\alpha_{t}^{x,i})\!+\!\left\{\partial_{x_l}\EE\left[\partial_{x_k}F(z,\alpha_{t}^{x,i})\right]\right\}|_{z=x}\!+\!\partial_{x_l}\left[\partial_{x_k}\EE F(z,\alpha_{t}^{x,i})|_{z=x}\right],$$
which implies that, for any $i,j\in \mathbb{S}$,
\begin{eqnarray*}
&&|\partial_{x_k}\partial_{ x_l}\hat{F}(x,i,t)-\partial_{x_k}\partial_{ x_l}\hat{F}(x,j,t)|\nonumber\\
\leq \!\!\!\!\!\!\!&&|\EE\partial_{x_k}\partial_{ x_l}F(x,\alpha_{t}^{x,i})-\EE\partial_{x_k}\partial_{ x_l}F(x,\alpha_{t}^{x,j})|\nonumber\\
&&+\left|\left\{\partial_{x_l}\EE\left[\partial_{x_k}F(z,\alpha_{t}^{x,i})\right]\right\}|_{z=x}-\left\{\partial_{x_l}\EE\left[\partial_{x_k}F(z,\alpha_{t}^{x,j})\right]\right\}|_{z=x}\right|\nonumber\\
&&+\left|\partial_{x_l}\left[\partial_{x_k}\EE F(z,\alpha_{t}^{x,i})|_{z=x}\right]-\partial_{x_l}\left[\partial_{x_k}\EE F(z,\alpha_{t}^{x,j})|_{z=x}\right]\right|\nonumber\\
=:\!\!\!\!\!\!\!\!\!&&J_{11}+J_{12}+J_{13} .
\end{eqnarray*}

The ergodicity condition \eref{ergodicity} yields that
\begin{eqnarray}
J_{11}\leq\|\partial_{x_k}\partial_{ x_l} F(x,\cdot)\|_{\infty} \|p^{x}_{i\cdot}(t)-p^{x}_{j\cdot}(t)\|_{\text{var}}\le C\|\partial^2_{x}F(x,\cdot)\|_{\infty} e^{-\lambda t}.\label{J11}
\end{eqnarray}

Using \eref{SDP1} we have
\begin{eqnarray}
J_{12}\leq\|\partial_{x_k} F(x,\cdot)\|_{\infty}\|\partial_{x_k}Q(x)\|_{\ell}e^{-\lambda t}t\le C\|\partial_{x}F(x,\cdot)\|_{\infty}\|\partial_{x_k}Q(x)\|_{\ell} e^{-\lambda t}t.\label{J12}
\end{eqnarray}

Applying \eref{SDP1} and \eref{SDP2}, we obtain
\begin{eqnarray}
J_{13}\leq\!\!\!\!\!\!\!\!&&\left|\partial_{x_k}P^x_t\partial_{x_l}F(x,\cdot)(i)\!-\!\partial_{x_k}P^x_t\partial_{x_l}F(x,\cdot)(j)\right|\!+\!\left|\partial_{x_l}\partial_{x_k}P^x_t F(x,\cdot)(i)\!-\!\partial_{x_l}\partial_{x_k}P^x_t F(x,\cdot)(j)\right|\nonumber\\
\leq\!\!\!\!\!\!\!&&C\|\partial_{x_l}F(x,\cdot)\|_{\infty}\|\partial_{x_k}Q(x)\|_{\ell} e^{-\lambda t}t\nonumber\\
&&+C\|F(x,\cdot)\|_{\infty}\left [\|\partial_{x_k}Q(x)\|_{\ell} \|\partial_{x_l}Q(x)\|_{\ell}+\|\partial_{x_k}\partial_{ x_l}Q(x)\|_{\ell}\right]e^{-\lambda t}(1\vee t^2).\label{J13}
\end{eqnarray}

Thus by \eref{J11}-\eref{J13}, we obtain
\begin{eqnarray}
|J_1|\leq\!\!\!\!\!\!\!&&C\left[\|F(x,\cdot)\|_{\infty}\left (\|\partial_{x_k}Q(x)\|_{\ell} \|\partial_{x_l}Q(x)\|_{\ell}+\|\partial_{x_k}\partial_{ x_l}Q(x)\|_{\ell}\right)\right.\nonumber\\
&&\quad+\|\partial_{x}F(x,\cdot)\|_{\infty}(1+\|\partial_{x_k}Q(x)\|_{\ell} )+\|\partial^2_x F(x,\cdot)\|_{\infty}]e^{-\lambda t}(1\vee t^2).\label{J1.}
\end{eqnarray}

(ii) For $J_2$,  for any coupling process $(\tilde{\alpha}_{t_0}^{x,i}, \tilde{\alpha}_{t_0}^{y,i})$ of $(\alpha_{t_0}^{x,i}, \alpha_{t_0}^{y,i})$,  using \eref{phatF}, we have that, for any $x,y\in\RR^n$,
\begin{eqnarray*}
&&\left|\EE[\partial_{x_k}\hat{F}(z,\alpha_{t_0}^{x,i},t)]-\EE[\partial_{x_k}\hat{F}(z,\alpha_{t_0}^{y,i},t)]\right|\\
=\!\!\!\!\!\!\!&&\left|\EE[\partial_{x_k}\hat{F}(z,\tilde{\alpha}_{t_0}^{x,i},t)]-\EE[\partial_{x_k}\hat{F}(z,\tilde{\alpha}_{t_0}^{y,i},t)]\right|\nonumber\\
\leq\!\!\!\!\!\!\!&& C\left[\|F(z,\cdot)\|_{\infty}\|\partial_{x_k}Q(z)\|_{\ell}+\|\partial_{x}F(z,\cdot)\|_{\infty}\right]e^{-\lambda t}(1\vee t) \mathbb{P}(\tilde{\alpha}_{t_0}^{x,i}\neq \tilde{\alpha}_{t_0}^{y,i}),
\end{eqnarray*}
thus we get
\begin{eqnarray*}
&&\left|\EE[\partial_{x_k}\hat{F}(z,\alpha_{t_0}^{x,i},t)]-\EE[\partial_{x_k}\hat{F}(z,\alpha_{t_0}^{y,i},t)]\right|\\
\leq\!\!\!\!\!\!\! &&C\left[\|F(z,\cdot)\|_{\infty}\|\partial_{x_k}Q(z)\|_{\ell}+\|\partial_{x}F(z,\cdot)\|_{\infty}\right]e^{-\lambda t}(1\vee t)\mathbb{W}(p^x_{i\cdot}(t_0), p^y_{i\cdot}(t_0)),
\end{eqnarray*}
where
$$\mathbb{W}(\mu,\nu):=\inf_{\pi} \sum^{m_0}_{i=1}\sum^{m_0}_{j=1}1_{\{i\neq j\}}\pi_{ij},$$
here the infimum is taken over all coupling measures $\pi=(\pi_{ij})$ on $\mathbb{S}\times \mathbb{S}$ of $\mu=(\mu_1,\ldots,\mu_{m_0})$ and $\nu=(\nu_1,\ldots,\nu_{m_0})$.

By $\mathbb{W}(\mu,\nu)=1/2\|\mu-\nu\|_{\text{var}}$ and \eref{FDP}, we get
\begin{eqnarray*}
&&\left|\EE[\partial_{x_k}\hat{F}(z,\alpha_{t_0}^{x,i},t)]-\EE[\partial_{x_k}\hat{F}(z,\alpha_{t_0}^{y,i},t)]\right|\nonumber\\
\leq\!\!\!\!\!\!\!&& C\left[\|F(z,\cdot)\|_{\infty}\|\partial_{x_k}Q(z)\|_{\ell}+\|\partial_{x}F(z,\cdot)\|_{\infty}\right]e^{-\lambda t}(1\vee t)\|p^x_{i\cdot}(t_0)-p^y_{i\cdot}(t_0)\|_{\text{var}}\nonumber\\
\leq\!\!\!\!\!\!\!&& C\left[\|F(z,\cdot)\|_{\infty}\|\partial_{x_k}Q(z)\|_{\ell}+\|\partial_{x}F(z,\cdot)\|_{\infty}\right]e^{-\lambda t}(1\vee t)|x-y|,
\end{eqnarray*}
which implies that
\begin{eqnarray}
|J_2|\leq C\left[\|F(x,\cdot)\|_{\infty}\|\partial_{x_k}Q(x)\|_{\ell}+\|\partial_{x}F(x,\cdot)\|_{\infty}\right]e^{-\lambda t}(1\vee t).\label{J2.}
\end{eqnarray}

(iii) For $J_3$, note that
\begin{eqnarray*}
\partial_{x_l}\!\left\{\partial_{x_k}\EE \hat F(z,\alpha^{x,i}_{t_0},t) |_{z=x}\right\}\!=\!\!\!\!\!\!\!\!\!&&\left\{\partial_{x_k}\EE \left[\partial_{x_l}\hat F(z,\alpha^{x,i}_{t_0},t) \right]\right\}|_{z=x}\!+\!\left\{\partial_{x_k}\partial_ {x_l}\EE \left[\hat F(z,\alpha^{x,i}_{t_0},t) \right]\right\}|_{z=x}\nonumber\\
\!=:\!\!\!\!\!\!\!\!&&J_{31}+J_{32}.
\end{eqnarray*}

As the same argumente in  $J_2$, it is easy to get
\begin{eqnarray}
|J_{31}|\leq  C\left[\|F(x,\cdot)\|_{\infty}\|\partial_{x_l}Q(x)\|_{\ell}+\|\partial_{x}F(x,\cdot)\|_{\infty}\right]e^{-\lambda t}(1\vee t).\label{J31}
\end{eqnarray}

By \eref{FDP2} and \eref{hatF}, we have
\begin{eqnarray}
|J_{32}|\!=\!\!\!\!\!\!\!\!\!&&\left|\partial_{x_k}\partial_{x_l}P^x_{t_0}\hat F(x,\cdot,t)\right|\nonumber\\
\!\le\!\!\!\!\!\!\!\!\!&& C\|\partial_{x_k}\partial_{ x_l}Q(x)\|_{\ell}\|\hat F(x,\cdot,t)\|_{\infty}
\!\le\! C\|\partial_{x_k}\partial_{ x_l}Q(x)\|_{\ell}\|F(x,\cdot)\|_{\infty}e^{-\lambda t}.\label{J32}
\end{eqnarray}
Hence, \eref{J31} and \eref{J32} yield that
\begin{eqnarray}
|J_{3}|\!\leq \!  C\left[\|F(x,\cdot)\|_{\infty}(\|\partial_{x_l}Q(x)\|_{\ell}\!+\!\|\partial_{x_k}\partial_{ x_l}Q(x)\|_{\ell})+\|\partial_{x}F(x,\cdot)\|_{\infty}\right]e^{-\lambda t}(1\vee t).\label{J3.}
\end{eqnarray}
By \eref{J1.}, \eref{J2.} and \eref{J3.}, we get \eref{E3.}. The proof is complete. $\square$

\begin{remark}
Suppose that \ref{A2} holds, $F$ satisfies \eref{CenCon} with $\|F(x,\cdot)\|_{\infty}\!<\!\infty$, $\forall x\in\RR^n$.
%\begin{eqnarray*}
%&&|F(x,i)-F(y,i)| \le C\left(1+|x|^{k}+|y|^k\right)|x-y|,\\
%&&\|Q(x)-Q(y)\|_{\ell} \le C|x-y|.
%\end{eqnarray*}
Then following a similar argument as used in \textbf{Step 2} in the proof of Theorem \ref{Poisson}, there exists a constant $C>0$ such that for any $x,y\in \RR^n$,
\begin{eqnarray}
\|\Phi(x,\cdot)-\Phi(y,\cdot)\|_{\infty}\le C\left[\|F(x,\cdot)\|_{\infty}\|Q(x)-Q(y)\|_{\ell}+\|F(x,\cdot)-F(y,\cdot)\|_{\infty}\right].\label{Elip}
\end{eqnarray}
\end{remark}

\begin{remark}
Assuming the "\emph{central condition}" \eref{CenCon}, \eref{SPE} is a solution of Poisson equation \eref{PE1}. However, this solution is not unique usually, but it is enough for our purpose. In fact, if the solution $\Phi$ also satisfies the "\emph{central condition}", i.e., $\sum^{m_0}_{i=1}\Phi(x,i)\mu^{x}_i=0$ for any $x\in\RR^n$, then the solution is unique (cf. \cite[Lemma 4.2.6]{PTW2012}).
\end{remark}
%\begin{remark}
%In the situation that Markov chain $\{\alpha^{x,i}_t\}_{t\geq 0}$ is independent of $x$, that is, the generator $Q(x)$ is constant matrix, the same results can be easily obtained through computation, as shown in \cite{LLSW2023} . However in current situation, i.e., the "full dependent" case,  the proof is much more complicated.
%\end{remark}

\section{Proofs of averaging principle}
In this section, we will use Poisson equation to study the averaging principle of  Eq.\eref{e:eqn2}. Specifically, we will investigate three convergences: (1) strong convergence of $X^{\varepsilon}$ in $C([0,T],\RR^n)$; (2) weak convergence of $X^{\varepsilon}$ in $C([0,T],\RR^n)$; (3) weak convergence of $X^{\varepsilon}_t$ in $\RR^n$ for any $t>0$.

Prior to proving the aforementioned results, it is necessary to establish a priori estimates of the solution $\{X^{\varepsilon}_t\}_{t \geq 0}$ of Eq. \eref{e:eqn2}, which will be utilized throughout the paper. Since the proof follows a standard argument, we omit the detailed proof here.

\begin{lemma} \label{PMX}
Suppose that  \ref{A1} and \ref{A2} hold. For any $T>0$ and $p>0$, there exists $C_{p,T}>0$ such that
\begin{eqnarray}
\mathbb{E}\big(\sup_{0\leq t\leq T}| X_t^{\varepsilon }|^{p}\big)\le C_{p,T}\left(1+|x|^p\right).  \label{X}
\end{eqnarray}
\end{lemma}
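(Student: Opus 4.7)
The plan is to derive the estimate via a standard Itô-based energy argument, observing that although $\alpha^\varepsilon$ is a pure-jump process, the slow component $X^\varepsilon_t$ has continuous paths, so classical Itô calculus applies to any smooth function of $X^\varepsilon$ with the coefficients $b,\sigma$ evaluated at the random state $(X^\varepsilon_s,\alpha^\varepsilon_s)$. The key ingredients are the bound \eref{ConA12} (a direct consequence of \eref{ConA11}), which gives the dissipativity $\langle b(x,i), x\rangle \leq C(1+|x|^2)$ uniformly in $i\in\mathbb{S}$, and the linear growth bound $\|\sigma(x,\cdot)\|_\infty \leq C(1+|x|)$ from \eref{ConA14}. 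Crucially, both bounds are uniform in the switching variable, so the resulting constants will not depend on $\varepsilon$.

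First I would treat the case $p\geq 2$ and apply Itô's formula to the smooth function $x\mapsto (1+|x|^2)^{p/2}$, yielding
\begin{align*}
(1+|X^\varepsilon_t|^2)^{p/2} =\;& (1+|x|^2)^{p/2} + p\int_0^t (1+|X^\varepsilon_s|^2)^{p/2-1}\langle b(X^\varepsilon_s,\alpha^\varepsilon_s), X^\varepsilon_s\rangle\,ds \\
&+ \tfrac{p}{2}\int_0^t (1+|X^\varepsilon_s|^2)^{p/2-1}\mathrm{tr}(\sigma\sigma^{\ast})(X^\varepsilon_s,\alpha^\varepsilon_s)\,ds \\
&+ p(p-2)\int_0^t (1+|X^\varepsilon_s|^2)^{p/2-2}|\sigma^{\ast}(X^\varepsilon_s,\alpha^\varepsilon_s)X^\varepsilon_s|^2\,ds + M^\varepsilon_t,
\end{align*}
where $M^\varepsilon_t$ is the stochastic integral against $W$. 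Using \eref{ConA12} and \eref{ConA14}, each of the three time integrals is dominated by $C_p\int_0^t (1+|X^\varepsilon_s|^2)^{p/2}\,ds$.

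Next I would introduce localizing stopping times $\tau_N := \inf\{t\geq 0: |X^\varepsilon_t|\geq N\}$ to kill the martingale part in expectation, apply Gronwall's inequality to $\mathbb{E}(1+|X^\varepsilon_{t\wedge\tau_N}|^2)^{p/2}$, and let $N\to\infty$ via Fatou's lemma to obtain $\sup_{t\leq T}\mathbb{E}|X^\varepsilon_t|^p \leq C_{p,T}(1+|x|^p)$. To move the supremum inside the expectation, I would apply the Burkholder--Davis--Gundy inequality to $M^\varepsilon_t$, bound the resulting quadratic variation by $C_p\int_0^T (1+|X^\varepsilon_s|^2)^{p}\,ds$, use Young's inequality to absorb the term $\mathbb{E}\sup_{t\leq T}(1+|X^\varepsilon_t|^2)^{p/2}$ into the left-hand side (after halving), and apply Gronwall once more. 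The case $0<p<2$ follows from the case $p=2$ via Jensen's inequality.

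I do not anticipate any serious obstacle: the argument is entirely classical and the switching process enters only through the evaluation of coefficients whose bounds are uniform in $i\in\mathbb{S}$, so the resulting constant $C_{p,T}$ is independent of $\varepsilon\in(0,1]$. The mildly nonstandard feature is the coupling with the jump process $\alpha^\varepsilon$, but since $X^\varepsilon$ itself has continuous paths and the jump of $\alpha^\varepsilon$ does not contribute any quadratic-variation term to $X^\varepsilon$, Itô's formula for continuous semimartingales suffices and no compensator of $\alpha^\varepsilon$ needs to be handled.
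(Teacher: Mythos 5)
Your argument is correct: the dissipativity bound \eref{ConA12} and the linear growth of $\sigma$ in \eref{ConA14} are uniform in $i\in\mathbb{S}$, so the classical Itô/localization/BDG/Gronwall scheme goes through with constants independent of $\varepsilon$, and the continuity of $X^{\varepsilon}$ indeed means no jump compensator enters. The paper itself omits the proof of this lemma as "a standard argument," and what you wrote is precisely that standard argument.
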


\subsection{Strong convergence of $X^{\varepsilon}$ in $C([0,T],\RR^n)$}
\vspace{0.2cm}

In this subsection, we study the strong convergence \eref{StrongA2} of $X^{\varepsilon}$ in $C([0,T],\RR^n)$. In this case, recall the corresponding averaged Eq. \eref{AR1}.

\subsubsection{Well-posedness of equation \eref{AR1}} We first study the existence and uniqueness of the solution for Eq. \eref{AR1} and its a priori estimates.

\begin{lemma}     \label{PMbarX}
Equation \eref{AR1} admits a unique solution $\{\bar{X}_t\}_{t\geq 0}$. Moreover, for any $T>0$ and $p>0$, there exists $C_{p,T}>0$ such that	
\begin{eqnarray}
\mathbb{E}\big(\sup_{0\leq t\leq T}| \bar{X}_t|^{p}\big)\le C_{p,T}(1+|x|^p).  \label{barX}
\end{eqnarray}
\end{lemma}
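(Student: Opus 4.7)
The plan is to reduce the well-posedness of \eref{AR1} to verifying that the averaged drift $\bar{b}$ satisfies a one-sided monotonicity inequality together with a polynomial growth bound; once these are in place, global existence and pathwise uniqueness follow from the classical theory of SDEs with locally Lipschitz coefficients satisfying a monotonicity condition, since $\sigma$ directly inherits its Lipschitz and linear growth properties from \eref{ConA14}. The a priori bound \eref{barX} will then come from a standard It\^o--Grönwall argument.

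The heart of the proof is the monotonicity estimate $\langle \bar{b}(x)-\bar{b}(y), x-y\rangle\le C|x-y|^2$. I would obtain it by a coupling argument: for any coupling $(\xi,\eta)$ of the invariant measures $(\mu^x,\mu^y)$, the strengthened monotonicity \eref{ConA11} gives
\[
\langle \bar{b}(x)-\bar{b}(y),x-y\rangle = \mathbb{E}\langle b(x,\xi)-b(y,\eta),x-y\rangle \le C|x-y|^2 + C|x-y|\,\mathbb{P}(\xi\neq\eta).
\]
Infimizing over couplings and using $\mathbb{W}(\mu^x,\mu^y)=\tfrac{1}{2}\|\mu^x-\mu^y\|_{\text{var}}$ reduces the monotonicity of $\bar{b}$ to the Lipschitz bound $\|\mu^x-\mu^y\|_{\text{var}}\le C|x-y|$ for the invariant measure.

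The main obstacle is proving this Lipschitz regularity of $\mu^x$, and this is exactly where \ref{A2} and \ref{A3} combine. I would replicate the semigroup interpolation used in the proof of Proposition \ref{DQ}: from
\[
P^y_t f - P^x_t f = \int_0^t P^y_s [Q(y)-Q(x)] P^x_{t-s} f\, ds,
\]
and using that $Q(y)-Q(x)$ annihilates constants so that $P^x_{t-s}f$ may be replaced by $P^x_{t-s}f-\mu^x(f)$, the Lipschitz bound \eref{LipQ} together with the uniform exponential ergodicity \eref{ergodicity} yield, for $\|f\|_\infty\le 1$,
\[
|P^y_tf(i)-P^x_tf(i)| \le \|Q(y)-Q(x)\|_\ell \int_0^t Ce^{-\lambda(t-s)}\, ds \le C|x-y|,
\]
uniformly in $t$ and $i$; letting $t\to\infty$ and using the ergodic limits of $p^x_{i\cdot}(t)$ and $p^y_{i\cdot}(t)$ produces $\|\mu^x-\mu^y\|_{\text{var}}\le C|x-y|$.

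With monotonicity secured, the polynomial growth $|\bar{b}(x)|\le\|b(x,\cdot)\|_\infty\le C(1+|x|^k)$ is immediate from \eref{ConA13}, so classical existence and uniqueness results for SDEs with monotone drift and Lipschitz diffusion apply. For \eref{barX}, It\^o's formula applied to $|\bar{X}_t|^p$, combined with $\langle \bar{b}(x),x\rangle\le C(1+|x|^2)$ (the averaged form of \eref{ConA12}) and $\|\sigma(x)\|^2\le C(1+|x|^2)$, reduces the estimate to a Grönwall bound on $\mathbb{E}|\bar{X}_t|^p$; the supremum version is then obtained by a Burkholder--Davis--Gundy inequality on the martingale part.
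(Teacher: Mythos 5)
Your proposal is correct and follows essentially the same route as the paper: the monotonicity of $\bar{b}$ is obtained by combining the strengthened condition \eref{ConA11} with a coupling argument and the total-variation Lipschitz bound coming from the semigroup perturbation formula of Proposition \ref{DQ}, after which classical monotone-coefficient well-posedness and an It\^o--BDG--Gr\"onwall argument give the moment bound. The only cosmetic difference is that you couple the invariant measures $(\mu^x,\mu^y)$ directly, whereas the paper couples the time-$t$ laws $(p^{x}_{i\cdot}(t),p^{y}_{i\cdot}(t))$ and sends $t\to\infty$ only at the end; both reduce to the same estimate $\|p^{x}_{i\cdot}(t)-p^{y}_{i\cdot}(t)\|_{\mathrm{var}}\le C|x-y|$.
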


\begin{proof}
For any $x_1,x_2\in\RR^n$, we have
\begin{eqnarray*}
&&\left\langle \bar b(x_{1})-\bar b(x_{2}),x_{1}-x_{2}\right\rangle\\
=\!\!\!\!\!\!\!\!&&\left\langle \mu^{x_1}(b(x_1,\cdot))-\mu^{x_2}(b(x_2,\cdot)),x_{1}-x_{2}\right\rangle\\
\leq\!\!\!\!\!\!\!\!&&\left|\mu^{x_1}(b(x_1,\cdot))-\EE b(x_{1},\alpha^{x_1,i}_t)\right||x_1-x_2|+\left|\mu^{x_2}(b(x_2,\cdot))-\EE b(x_{2},\alpha^{x_2,i}_t)\right||x_1-x_2|\\
&&+\left\langle \EE b(x_{1},\alpha^{x_1,i}_t)-\EE b(x_{2},\alpha^{x_2,i}_t), x_1-x_2\right\rangle\\
\le\!\!\!\!\!\!\!\!&&\|b(x_1,\cdot)\|_{\infty}\|p^{x_1}_{i\cdot}-\mu^{x_1}\|_{\text{var}}|x_1-x_2|
+\|b(x_2,\cdot)\|_{\infty}\|p^{x_2}_{i\cdot}-\mu^{x_2}\|_{\text{var}}|x_1-x_2|\\
&&+\left\langle \EE b(x_{1},\alpha^{x_1,i}_t)-\EE b(x_{2},\alpha^{x_2,i}_t), x_1-x_2\right\rangle\\
\le\!\!\!\!\!\!\!\!&&C(1+|x_1|^k+|x_2|^k) e^{-\lambda t}|x_1-x_2|+\left\langle \EE b(x_{1},\alpha^{x_1,i}_t)-\EE b(x_{2},\alpha^{x_2,i}_t), x_1-x_2\right\rangle.
\end{eqnarray*}
Using condition \eref{ConA11}, for any coupling process $(\tilde{\alpha}_{t}^{x_1,i}, \tilde{\alpha}_{t}^{x_2,i})$ of $(\alpha_{t}^{x_1,i}, \alpha_{t}^{x_2,i})$,  we have
\begin{eqnarray*}
\left\langle \EE b(x_{1},\alpha^{x_1,i}_t)-\EE b(x_{2},\alpha^{x_2,i}_t), x_1-x_2\right\rangle
=\!\!\!\!\!\!\!\!\!\!&&\left\langle \EE b(x_{1},\tilde\alpha^{x_1,i}_t)-\EE b(x_{2},\tilde\alpha^{x_2,i}_t), x_1-x_2\right\rangle\nonumber\\
\le\!\!\!\!\!\!\!\!\!\!&& C|x_1-x_2|^2+C\PP(\tilde{\alpha}_{t}^{x_1,i}\neq\tilde{\alpha}_{t}^{x_2,i})|x_1-x_2|,
\end{eqnarray*}
which implies
\begin{eqnarray}
\left\langle \EE b(x_{1},\alpha^{x_1,i}_t)-\EE b(x_{2},\alpha^{x_2,i}_t), x_1-x_2\right\rangle
\le C|x_1-x_2|^2+C\mathbb{W}^2(p^{x_1}_{i\cdot}(t), p^{x_2}_{i\cdot}(t)).\label{Wd}
\end{eqnarray}

As the proof of Proposition \ref{DQ}, \eref{LipQ} implies that
\begin{eqnarray}
\sup_{t>0}\|p^{x_1}_{i\cdot}(t)-p^{x_2}_{i\cdot}(t)\|_{\text{var}}\leq C\|Q(x_1)-Q(x_2)\|_{\ell}\leq C|x_1-x_2|.\label{Var}
\end{eqnarray}
\eref{Wd}, \eref{Var} and $\mathbb{W}(\mu,\nu)=1/2\|\mu-\nu\|_{\text{var}}$ imply that
\begin{eqnarray*}
\left\langle \bar b(x_{1})-\bar b(x_{2}),x_{1}-x_{2}\right\rangle \leq C\left(1+|x_1|^k+|x_2|^k\right) e^{-\lambda t}+C|x_1-x_2|^2.
\end{eqnarray*}
Let $t\to \infty$, we final obtain
 \begin{eqnarray}
\left\langle \bar b(x_{1})-\bar b(x_{2}),x_{1}-x_{2}\right\rangle\leq C|x_{1}-x_{2}|^{2}, \label{barc1}
\end{eqnarray}
which implies
\begin{eqnarray}
\langle \bar b(x),x\rangle \leq C(1+|x|^{2}).\label{Eobarb}
\end{eqnarray}
Hence, Eq.\eref{AR1} admits a unique solution (see \cite[Theorem 3.1.1]{LR2015}). Moreover,  \eref{barX} holds by using \eref{Eobarb} and \eref{ConA14}. The proof is complete.		
\end{proof}

\subsubsection{Mollifying approximation} Using \eref{Elip}, \ref{A1}-\ref{A3} imply the Poisson equation's solution $\Phi(x,\alpha)$ is polynomial growth and local Lipschitz continuous with respective to $x$, which is not smooth enough for applying It\^{o}'s formula on $\Phi$ later. Thus a mollifying convergence argument will be introduced firstly.

%\vspace{2mm}
Let $\rho:\RR^n\to[0,1]$ be a smooth function such that for $m\in\mathbb{N}_{+}$,
\begin{eqnarray}
\int_{\mathbb{R}^{n}}\rho (z)dz=1,\quad \int_{\mathbb{R}^{n}} \left |z\right |^{m}\rho (z)dz\le C_m,\quad \left \|\nabla^{m}\rho (z) \right\|\le C_m\rho (z).\label{F6.1}
\end{eqnarray}
Define a mollifying approximation of $H:\RR^n\times \mathbb{S}\rightarrow \RR^n$  as follows:
\begin{eqnarray*}
H_m(x,i):=\int_{\mathbb{R}^{n}}H(x-z,i)\rho ^{m}(z)dz.
\end{eqnarray*}

We have the following lemma:

\begin{lemma}\label{App}
Suppose there exist $C,k>0$ such that for $x,y\in\RR^n$,
\begin{eqnarray}
\|H(x,\cdot)-H(y,\cdot)\|_{\infty}\leq C(1+|x|^k+|y|^k)|x-y|,\quad \|H(x,\cdot)\|_{\infty}\leq C(1+|x|^k).\label{locLip}
\end{eqnarray}
Then for $m\in\mathbb{N}_{+}$ and $x\in\RR^n$, we have
\begin{eqnarray}
\|H(x,\cdot)-H_m(x,\cdot)\|_\infty\leq C(1+|x|^k) m^{-1},\quad \|H_m(x,\cdot)\|_{\infty}\leq C(1+|x|^k),\label{App1}
\end{eqnarray}
\begin{eqnarray}
\| \partial_x H_m(x,\cdot)\|_{\infty}\leq C(1+|x|^k), \quad\| \partial^2_x H_m(x,\cdot)\|_{\infty}\leq C(1+|x|^k) m,\label{App2}
\end{eqnarray}
where $C>0$ is a constant independent of $m$ and $x$.
\end{lemma}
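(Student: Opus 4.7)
The plan is to interpret $\rho^m$ as the standard rescaled mollifier of width $1/m$, namely $\rho^m(z) := m^n \rho(mz)$. Under this rescaling, the three assumptions in \eref{F6.1} on $\rho$ translate into
\begin{eqnarray*}
\int_{\RR^n}\rho^m(z)\,dz=1,\quad \int_{\RR^n}|z|^j \rho^m(z)\,dz \leq C_j m^{-j},\quad \|\nabla^j \rho^m(z)\|\leq C_j m^j \rho^m(z),
\end{eqnarray*}
so in particular $\int \nabla^j\rho^m(z)\,dz = 0$ for $j\geq 1$ by integration by parts. With these three facts in hand, every estimate in the lemma reduces to a direct mollifier calculation that exploits \eref{locLip}.

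For \eref{App1}, I would write $H(x,i)-H_m(x,i) = \int [H(x,i)-H(x-z,i)]\rho^m(z)\,dz$ and bound the integrand by $C(1+|x|^k+|z|^k)|z|\rho^m(z)$ via \eref{locLip}; the moment bounds then give $\|H(x,\cdot)-H_m(x,\cdot)\|_\infty \leq C(1+|x|^k)m^{-1}+C m^{-(k+1)}$, which is absorbed into $C(1+|x|^k)m^{-1}$. The sup norm of $H_m$ itself follows at once from this difference estimate and the polynomial bound on $H$ (or by estimating $\int |H(x-z,i)|\rho^m(z)\,dz$ directly).

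For the derivative estimates \eref{App2}, the essential trick is to transfer the $x$-derivative onto $\rho^m$ via the change of variables $y=x-z$, writing $H_m(x,i) = \int H(y,i)\rho^m(x-y)\,dy$ so that $\partial_x^j H_m(x,i) = \int H(y,i)\nabla^j\rho^m(x-y)\,dy$ for $j=1,2$. Since $\int\nabla^j\rho^m(z)\,dz=0$ for $j\geq 1$, I may subtract $H(x,i)$ inside the integral to obtain $\partial_x^j H_m(x,i) = \int [H(x-z,i)-H(x,i)]\nabla^j\rho^m(z)\,dz$. Combining $|\nabla^j\rho^m(z)|\leq C m^j\rho^m(z)$ with the Lipschitz bound in \eref{locLip} then yields
\begin{eqnarray*}
|\partial_x^j H_m(x,i)|\leq C m^j\int (1+|x|^k+|z|^k)|z|\rho^m(z)\,dz \leq C(1+|x|^k) m^{j-1},
\end{eqnarray*}
which gives $C(1+|x|^k)$ for $j=1$ and $C(1+|x|^k)m$ for $j=2$, as required.

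There is no genuine obstacle here, since the proof is a standard mollifier computation. The only subtle point is the subtraction of $H(x,i)$ inside the derivative integral, which exploits the vanishing zeroth moment of $\nabla^j\rho^m$ to extract the extra factor of $|z|$ from the Lipschitz assumption; without this step the naive bound for $\partial_x^j H_m$ would be off by a factor of $m$, and the second-derivative estimate in \eref{App2} would be degraded from $C(1+|x|^k)m$ to $C(1+|x|^k)m^2$.
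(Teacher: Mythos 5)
Your proof is correct, and for the derivative estimates \eref{App2} it takes a genuinely different (though closely related) route from the paper. The paper never puts more than one derivative on the mollifier: it obtains $\|\partial_x H_m(x,\cdot)\|_\infty\le C(1+|x|^k)$ by showing directly from \eref{locLip} that $H_m$ inherits the local Lipschitz bound of $H$ (no differentiation of $\rho^m$ at all), and it obtains the second-derivative bound by writing $\partial_x H_m(x,i)=\int H(x-z,i)\,m^{n+1}\nabla\rho(mz)\,dz$ and proving that this expression is locally Lipschitz in $x$ with constant $C(1+|x|^k)m$, the factor $m$ coming from $\int m^{n+1}\rho(mz)\,dz=m$. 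You instead put $j$ derivatives on $\rho^m$ and recover the lost factor $m^{-1}$ by subtracting $H(x,i)$ under the integral, which requires the vanishing of $\int\nabla^j\rho^m\,dz$; this is legitimate here because \eref{F6.1} forces $\rho,\nabla\rho,\nabla^2\rho\in L^1(\RR^n)$ (each dominated by $C\rho$), so the integrals of the gradients vanish even without assuming compact support --- a small extra justification you should record. Your method is more symmetric (one computation covers $j=1,2$ and would extend to higher $j$ with rate $m^{j-1}$), while the paper's difference-quotient argument avoids the moment-cancellation step entirely and only ever uses the pointwise domination $\|\nabla\rho\|\le C\rho$ for a single gradient. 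Both arguments yield exactly the stated rates, and your closing observation about why the subtraction is needed is accurate.
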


\begin{proof}
(i) {\it The proof of \eref{App1}. } For $m\in\mathbb{N}_{+}$, we have
\begin{eqnarray*}
\left |H_m(x,i)-H(x,i)\right |=\!\!\!\!\!\!\!&&\Big |\int_{\mathbb{R}^{n}}H(x-z,i)\rho ^{m}(z)dz-\int_{\mathbb{R}^{n}}H(x,i)\rho ^{m}(z)dz\Big |\nonumber\\
\leq\!\!\!\!\!\!\!&&C\int_{\mathbb{R}^{n}}(1+|x|^k+|z|^k)||z|\rho ^{m}(z)dz\nonumber \\
=\!\!\!\!\!\!\!&&C(1+|x|^k)m^{-1}\int_{\mathbb{R}^{n}}\left |mz\right |m^{n}\rho (mz)dz+Cm^{-k}\int_{\mathbb{R}^{n}}|mz|^k m^{n}\rho (mz)dz\nonumber \\
=\!\!\!\!\!\!\!&&C(1+|x|^k)m^{-1}\int_{\mathbb{R}^{n}}\left |y\right |\rho (y)dy+Cm^{-k}\int_{\mathbb{R}^{n}}\left |y\right |^k\rho (y)dy\nonumber\\
\leq\!\!\!\!\!\!\!&&C(1+|x|^k)m^{-1},
\end{eqnarray*}
which implies
$$
\|H(x,\cdot)-H_m(x,\cdot)\|_\infty\leq C(1+|x|^k) m^{-1}.
$$
Consequently, we have for $x\in\mathbb{R}^n$,
\begin{eqnarray*}
\|H_m(x,\cdot)\|_{\infty}\leq \|H(x,\cdot)-H_m(x,\cdot)\|_{\infty}+\|H(x,\cdot)\|_{\infty}\leq C(1+|x|^k).
\end{eqnarray*}

(ii) {\it The proof of \eref{App2}. } For $x_1,x_2\in\RR^n$ and $i\in\mathbb{S}$,
\begin{eqnarray*}
\left |H_m(x_1,i)-H_m(x_2,i)\right |=\!\!\!\!\!\!\!&&\Big |\int_{\mathbb{R}^{n}}H(x_1-z,i)\rho ^{m}(z)dz-\int_{\mathbb{R}^{n}}H(x_2-z,i)\rho ^{m}(z)dz\Big |\nonumber\\
&&\le C\int_{\mathbb{R}^{n}}(1+|x_1|^k+|x_2|^k+|z|^k)|x_1-x_2|\rho ^{m}(z)dz\nonumber\\
&&\le (1+|x_1|^k+|x_2|^k)|x_1-x_2|
\end{eqnarray*}
 implies
\begin{eqnarray*}
\left \|\partial_x H_m(x,\cdot)\right \|_{\infty}\le C(1+|x|^k).
\end{eqnarray*}
Since
\begin{eqnarray*}
\partial_x H_m(x,i)=\int_{\mathbb{R}^{n}}H(z,i)\partial_x\left[\rho ^{m}(x-z)\right]dz
%=\int_{\mathbb{R}^{n}}F(x-z,i)\nabla\rho ^{m}(z)dz
=\int_{\mathbb{R}^{n}}H(x-z,i)m^{n+1}\nabla\rho(mz)dz.
\end{eqnarray*}
Then by \eref{locLip} and \eref{F6.1}, we get
\begin{eqnarray*}
\left \|\partial_x H_m(x_1,i)-\partial_x H_m(x_2,i)\right \|\le\!\!\!\!\!\!\!&& \int_{\mathbb{R}^{n}}\left |H(x_1-z,i)-H(x_2-z,i)\right |\left |\nabla\rho (mz)\right |\cdot m^{n+1}dz\\
\le\!\!\!\!\!\!\!&& C\left |x_1-x_2\right |\int_{\mathbb{R}^{n}}(1+|x_1|^k+|x_2|^k+|z|^k)\rho (mz)m^{n+1}dz\\
\le\!\!\!\!\!\!\!&& C(1+|x_1|^k+|x_2|^k)|x_1-x_2|m,
\end{eqnarray*}
which implies
\begin{eqnarray*}
\left \|\partial_x^{2} H_m(x,\cdot)\right \|_{\infty}\le C(1+|x|^k) m.
\end{eqnarray*}
The proof is complete.
\end{proof}

\subsubsection{Proof of Theorem \ref{main result 1}}  We prove our first result in the averaging principle.

\vspace{-2mm}
~~\\
\textbf{Proof of Theorem \ref{main result 1}:} We divide the proof into three steps.

\vspace{-3mm}
~~\\
\textbf{Step 1:} Note that
\begin{eqnarray*}
X_t^\varepsilon-\bar{X_t}=\!\!\!\!\!\!&&\int_0^{t}\left[b(X_s^\varepsilon,\alpha_s^\varepsilon)-\bar{b}(\bar{X}_s)\right]ds+\int_0^t\left[\sigma(X_s^\varepsilon)-\sigma(\bar{X}_s)\right]dW_s.
\end{eqnarray*}
By It\^{o}'s formula, we have  that, for $p\geq 4$,
		\begin{eqnarray*}
			|X_t^\varepsilon-\bar{X_t}|^{p}=\!\!\!\!\!\!&&p\int_0^t|X_s^\varepsilon-\bar{X}_s|^{p-2}\left\langle X_s^\varepsilon-\bar{X}_s,b(X_s^{\varepsilon},\alpha_s^{\varepsilon})-\bar{b}({X_s^{\varepsilon }})\right\rangle ds\\
			&&+p\int_0^t|X_s^\varepsilon-\bar{X}_s|^{p-2}\left\langle X_s^\varepsilon-\bar{X}_s,\bar{b}({X_s^{\varepsilon }})-\bar{b}(\bar{X}_s)\right\rangle ds\\
			 &&+{\frac{p(p-2)}{2}}\int_0^t|X_s^\varepsilon-\bar{X}_s|^{p-4}\big|\!\left(\sigma(X_s^\varepsilon)-\sigma(\bar{X}_s)\right)^{*}\!\cdot\!\left(X_s^\varepsilon-\bar{X}_s \right) \!\big|^{2}ds\\
			&&+{\frac{p}{2}}\int_0^t|X_s^\varepsilon-\bar{X}_s|^{p-2} \|\sigma(X_s^\varepsilon)-\sigma(\bar{X}_s)\|^2ds\\
			&&+p\int_0^t|X_s^\varepsilon-\bar{X_s}|^{p-2}\left\langle X_s^\varepsilon-\bar{X_s},\left(\sigma(X_s^\varepsilon)-\sigma(\bar{X}_s)\right)dW_s\right\rangle\\
=:\!\!\!\!\!\!&&\sum^{5}_{i=1}B_i(t).
		\end{eqnarray*}
Young's inequality yields that
\begin{eqnarray}
			&&\mathbb{E}\big(\sup_{0\leq t\leq T}|B_1(t)|\big)\nonumber\\
 \le\!\!\!\!\!\!\!\!&& C_{p} \mathbb{E}\Big[\sup_{0\leq s\leq T}|X_s^\varepsilon-\bar{X}_s|^{p-2}\cdot\sup_{0\leq t\leq T}\Big|\int_0^t\left\langle X_s^\varepsilon-\bar{X}_s,b(X_s^{\varepsilon},\alpha_s^{\varepsilon})-\bar{b}({X_s^{\varepsilon }})\right\rangle ds\Big|\Big]\nonumber \\
			\le\!\!\!\!\!\!\!\!&& C_p\mathbb{E}\Big(\sup_{0\leq t\leq T}\Big|\int_0^t\!\!\left\langle X_s^\varepsilon\!-\!\bar{X}_s,b(X_s^{\varepsilon},\alpha_s^{\varepsilon})\!-\!\bar{b}({X_s^{\varepsilon }})\right\rangle ds\Big|^{\frac{p}{2}}\Big) +\frac{1}{3}\mathbb{E}\big(\sup_{0\leq t\leq T}|X_t^\varepsilon\!-\!\bar{X}_t|^{p}\big)\!.\label{S5.1}
		\end{eqnarray}
By \eref{ConA14} and \eref{barc1}, it is easy to prove
		\begin{eqnarray}
			\sum^{4}_{i=2}\EE\big(\sup_{0\leq t\leq T}|B_i(t)|\big)\le C_{p}\int_0^T\EE|X_s^\varepsilon-\bar{X}_s|^{p}ds.\label{S5.2}
		\end{eqnarray}

The Burkholder-Davis-Gundy (BDG) inequality and Young's inequality imply that
		\begin{eqnarray}
			\mathbb{E}\big(\sup_{0\leq t\leq T}|B_5(t)|\big)\!\!\!\!\!\!\!\!&&\le C_{p}\mathbb{E}\Big[\int_{0}^{T}|X_s^\varepsilon-\bar{X}_s|^{2(p-1)} \|\sigma(X_s^\varepsilon)-\sigma(\bar{X}_s)\|^2ds \Big]^{1/2}\nonumber\\
			\!\!\!\!&&\le \frac{1}{3}\mathbb{E}\big(\sup_{0\leq t\leq T}|X_t^\varepsilon-\bar{X}_t|^{p}\big)+\!  C_p\mathbb{E}\int_{0}^{T}|X_s^\varepsilon-\bar{X}_s|^{p}ds\label{S5.4}
		\end{eqnarray}
Combining  \eref{S5.1}-\eref{S5.4}, we get
		\begin{eqnarray*}
			\mathbb{E}\big(\sup_{0\leq t\leq T}|X^{\varepsilon}_t-\bar{X}_t|^p\big)\!\!\!\!\!\!&&
			\leq C_{p}\mathbb{E}\Big[\sup_{0\leq t\leq T}\Big|\int^t_0\langle X_{s}^{\varepsilon}-\bar{X}_{s}, b(X^{\varepsilon}_{s},\alpha^{\varepsilon}_{s})-\bar{b}(X^{\varepsilon}_{s})\rangle ds\Big|^{p/2}\Big]\\
			\!\!\!\!\!\!&&+C_p\int_{0}^{T}\mathbb{E}|X_s^\varepsilon-\bar{X}_s|^{p}ds.
		\end{eqnarray*}
Then by Gronwall's inequality, it follows
		\begin{eqnarray}
			\mathbb{E}\big(\sup_{0\leq t\leq T}|X^{\varepsilon}_t-\bar{X}_t|^p\big)
			\leq C_{p,T}\mathbb{E}\Big[\sup_{0\leq t\leq T}\big|\int^t_0 \langle X_{s}^{\varepsilon}-\bar{X}_{s}, b(X^{\varepsilon}_{s},\alpha^{\varepsilon}_{s})-\bar{b}(X^{\varepsilon}_{s})\rangle ds\big|^{p/2}\Big]. \label{S5.61}
		\end{eqnarray}

\vspace{-3mm}
~~\\
\textbf{Step 2:} In order to estimate the right term in \eref{S5.61}, we consider the Poisson equation
\begin{eqnarray}
-Q(x)\Phi(x,\cdot)(i)=F(x,i), \label{PEQ}
\end{eqnarray}
where $F(x,i):=b(x,i)-\bar{b}(x)$. Obviously, $F$ satisfies "\emph{central condition}" \eref{CenCon}. Then Th.\ref{Poisson} yields that \eref{PEQ} has a solution $\Phi$. Moreover, using \eref{Elip}, \eref{ConA13} and \eref{LipQ} imply that
\begin{eqnarray*}
\|\Phi(x,\cdot)-\Phi(y,\cdot)\|_{\infty}\leq C\left(1+|x|^k+|y|^k\right)|x-y|,\quad \|\Phi(x,\cdot)\|_{\infty}\leq C\left(1+|x|^k\right).
\end{eqnarray*}
%This means that $\Phi$ is not smooth enough by applying It\^{o}'s formula.
Lemma \ref{App} implies that there exist smooth sequences $\{\Phi_m(x,i)\}_{m\geq 1}$ such that
\begin{eqnarray}
\|\Phi(x,\cdot)-\Phi_m(x,\cdot)\|_\infty\leq C\left(1+|x|^k\right) m^{-1},\quad \|\Phi_m(x,\cdot)\|_{\infty}\leq C\left(1+|x|^k\right),\label{PhiApp1}
\end{eqnarray}
\begin{eqnarray}
\| \partial_x \Phi_m(x,\cdot)\|_{\infty}\leq C\left(1+|x|^k\right), \quad\| \partial^2_x \Phi_m(x,\cdot)\|_{\infty}\leq C\left(1+|x|^k\right) m.\label{PhiApp2}
\end{eqnarray}
Using Young's inequality and \eref{PhiApp1}, there exists $k_p>0$ such that
\begin{eqnarray}
&&\mathbb{E}\big(\sup_{0\leq t\leq T}|X_t^\varepsilon-\bar{X_t}|^{p}\big)\nonumber\\
\le\!\!\!\!\!\!&&C_{p,T}\mathbb{E}\Big[\sup_{0\leq t\leq T}\Big|\int^t_0\langle X_{s}^{\varepsilon}-\bar{X}_{s},Q(X_{s}^{\varepsilon})\Phi(X_{s}^{\varepsilon},\cdot)(\alpha_s^{\varepsilon})
-Q(X_{s}^{\varepsilon})\Phi_m(X_{s}^{\varepsilon},\cdot)(\alpha_s^{\varepsilon})\rangle ds\Big|^{p/2}\Big]\nonumber\\
&&+C_{p,T}\mathbb{E}\Big[\sup_{0\leq t\leq T}\Big|\int^t_0 \langle X_{s}^{\varepsilon}-\bar{X}_{s},Q(X_{s}^{\varepsilon})\Phi_m(X_{s}^{\varepsilon},\cdot)(\alpha_s^{\varepsilon})\rangle ds\Big|^{p/2}\Big]\nonumber\\
\le\!\!\!\!\!\!&&\frac{1}{2}\mathbb{E}\big(\sup_{0\leq t\leq T}|X_t^\varepsilon-\bar{X_t}|^{p}\big)+C_{p,T}\mathbb{E}\int^T_0\|Q(X_{s}^{\varepsilon})\|^p_{\ell}
\|\Phi(X_{s}^{\varepsilon},\cdot)-\Phi_m(X_{s}^{\varepsilon},\cdot)\|^p_{\infty} ds\nonumber\\
&&+C_{p,T}\mathbb{E}\Big[\sup_{0\leq t\leq T}\Big|\int^t_0 \langle X_{s}^{\varepsilon}-\bar{X}_{s},Q(X_{s}^{\varepsilon})\Phi_m(X_{s}^{\varepsilon},\cdot)(\alpha_s^{\varepsilon})\rangle ds\Big|^{p/2}\Big]\nonumber\\
\le\!\!\!\!\!\!&&\frac{1}{2}\mathbb{E}\big(\sup_{0\leq t\leq T}|X_t^\varepsilon-\bar{X_t}|^{p}\big)+C_{p,T}(1+|x|^{k_p})m^{-p}\nonumber\\
&&+C_{p,T}\mathbb{E}\Big[\sup_{0\leq t\leq T}\Big|\int^t_0 \langle X_{s}^{\varepsilon}-\bar{X}_{s}, Q(X_{s}^{\varepsilon})\Phi_m(X_{s}^{\varepsilon},\cdot)(\alpha_s^{\varepsilon})\rangle ds\Big|^{p/2}\Big].\label{F4.19}
\end{eqnarray}

Recall that $\alpha^{\varepsilon}$ can be described as following (cf. \cite{BBG1999}),
\begin{equation*} \label{2.3}
d\alpha^{\varepsilon}_t=\int_{[0, \infty)}g^{\varepsilon}(X^{\vare}_{t-},\alpha^{\varepsilon}_{t-}, z)N(dt, dz),
\end{equation*}
where $N(dt, dz)$ is a Poisson random measure defined on $\Omega\times\mathcal{B}(\mathbb{\RR_{+}})\times\mathcal{B}(\mathbb{\RR_{+}})$ with Lebesgue measure as its intensity measure, $N(dt, dz)$ is independent of $W$, and
$$
g^{\varepsilon}(x,i, z)=\sum_{j\in \mathbb{S}\backslash\{ i\} }(j-i)1_{z\in\triangle^{\varepsilon}_{ij}(x)},\quad   i\in \mathbb{S}\,,
$$
$\triangle^{\varepsilon}_{ij}(x)$ are the consecutive (with respect to the lexicographic ordering on $\mathbb{S}\times\mathbb{S}$) left-closed, right-open intervals of $\mathbb{R}_{+}$, each having length
$q_{ij}(x)\varepsilon^{-1}$, with $\Delta^{\varepsilon}_{12}= [0,\varepsilon^{-1}q_{12}(x))$.

Applying It\^{o}'s formula, see e.g. \cite[(2.7)]{YZ2010}, we obtain
\begin{eqnarray*}
&&\big\langle\Phi_m(X_t^{\varepsilon},\alpha_t^{\varepsilon}),X^{\varepsilon}_t-\bar{X}_t\big\rangle\\
=\!\!\!\!\!\!\!\!\!&&\int_0^t\!\!\left \langle\partial_x\Phi_m(X_{s}^{\varepsilon},\alpha_s^{\varepsilon})\cdot b(X_{s}^{\varepsilon},\alpha_s^{\varepsilon}), X^{\varepsilon}_s\!-\!\bar{X}_s\right\rangle ds+\int_0^t\langle\partial_x\Phi_m(X_{s}^{\varepsilon},\alpha_s^{\varepsilon})\cdot \sigma(X_{s}^{\varepsilon})dW_s, X^{\varepsilon}_s-\bar{X}_s\rangle\\
&&+\frac12\!\int_0^t\!\!\left\langle \text{Tr}\big[\partial_x^{2}\Phi_m(X_{s}^{\varepsilon},\alpha_s^{\varepsilon})\cdot \left(\sigma\sigma^{*}\right)(X_{s}^{\varepsilon})\big], X^{\varepsilon}_s\!-\!\bar{X}_s\right\rangle ds\\
&&+\int_0^t\!\int_{[0,\infty)}\left\langle \Phi_m(X_{s-}^{\varepsilon},\alpha^{\varepsilon}_{s-}\!+\!g^{\varepsilon}(X_{s-}^{\varepsilon},\alpha_{s-}^{\varepsilon},z))-\Phi_m(X_{s-}^{\varepsilon}\!,\alpha_{s-}^{\varepsilon}),X^{\varepsilon}_s-\bar{X}_s\right\rangle \tilde{N}(ds,dz)\\
&&+\int_0^{t}\left\langle \Phi_m(X_s^{\varepsilon},\alpha_s^{\varepsilon}),\left[b(X_s^\varepsilon,\alpha_s^\varepsilon)-\bar{b}(\bar{X}_s)\right]\right\rangle ds+\int_0^t\left\langle\Phi_m(X_s^{\varepsilon},\alpha_s^{\varepsilon}),\left[\sigma(X_s^\varepsilon)-\sigma(\bar{X}_s)\right]dW_s\right\rangle\\
&&+\int^t_0\text{Tr}\left[\sigma(X_s^{\varepsilon})\big(\sigma^{\ast}(X_s^{\varepsilon})-\sigma^{\ast}(\bar {X}_s)\big)\cdot \partial_x\Phi_m(X_s^{\varepsilon},\alpha_s^{\varepsilon})\right] ds\\
&&+\frac{1}{\varepsilon}\int_0^t\left\langle Q(X_{s}^{\varepsilon})\Phi_m(X_{s}^{\varepsilon},\cdot)(\alpha_s^{\varepsilon}), X^{\varepsilon}_s-\bar{X}_s\right\rangle ds,
\end{eqnarray*}
where
$$\text{Tr}\big[\partial_x^{2}\Phi_m(x,i)\cdot \left(\sigma\sigma^{*}\right)(x)\big]\!:=\!\left(\text{Tr}\big[\partial_x^{2}\Phi^1_m(x,i)\cdot \left(\sigma\sigma^{*}\right)(x)\big],\ldots,\text{Tr}\big[\partial_x^{2}\Phi^n_m(x,i)\cdot \left(\sigma\sigma^{*}\right)(x)\big]\right).$$
Therefore, it is easy to see
\begin{eqnarray}
&&-\int_0^t \langle Q(X_{s}^{\varepsilon})\Phi_m(X_{s}^{\varepsilon},\cdot)(\alpha_s^{\varepsilon}),X^{\varepsilon}_s-\bar{X}_s\rangle ds\nonumber\\
=\!\!\!\!\!\!\!\!\!&&\varepsilon\Big\{-\langle\Phi_m(X_t^{\varepsilon},\alpha_t^{\varepsilon}),X^{\varepsilon}_t-\bar{X}_t\rangle+\int_0^t \langle\partial_x\Phi_m(X_{s}^{\varepsilon},\alpha_s^{\varepsilon})\cdot b(X_{s}^{\varepsilon},\alpha_s^{\varepsilon}), X^{\varepsilon}_s-\bar{X}_s\rangle ds\nonumber\\
&&+\int_0^t\langle\partial_x\Phi_m(X_{s}^{\varepsilon},\alpha_s^{\varepsilon})\cdot \sigma(X_{s}^{\varepsilon})dW_s, X^{\varepsilon}_s-\bar{X}_s\rangle+\int_0^{t}\langle \Phi_m(X_s^{\varepsilon},\alpha_s^{\varepsilon}),\left[b(X_s^\varepsilon,\alpha_s^\varepsilon)-\bar{b}(\bar{X}_s)\right]\rangle ds\nonumber\\
&&+\!\!\int_0^t\!\!\langle\Phi_m(X_s^{\varepsilon},\alpha_s^{\varepsilon}),[\sigma(X_s^\varepsilon)\!-\!\sigma(\bar{X}_s)]dW_s\rangle\!+\!\!\int^t_0\!\!\text{Tr}\!\left[\sigma(X_s^{\varepsilon})(\sigma^{\ast}(X_s^{\varepsilon})\!-\!\sigma^{\ast}(\bar {X}_s))\!\cdot\! \partial_x\Phi_m(X_s^{\varepsilon},\alpha_s^{\varepsilon})\right]\! ds\Big\}\nonumber\\
&&+\varepsilon\int_0^t \frac{1}{2}\langle \text{Tr}\big[\partial_x^{2}\Phi_m(X_{s}^{\varepsilon},\alpha_s^{\varepsilon})\cdot \left(\sigma\sigma^{*}\right)(X_{s}^{\varepsilon})\big], X^{\varepsilon}_s-\bar{X}_s\rangle ds\nonumber\\
&&+\varepsilon\int_0^t\!\int_{[0,\infty)}\langle \Phi_m(X_{s-}^{\varepsilon},\alpha^{\varepsilon}_{s-}+\!g^{\varepsilon}(X_{s-}^{\varepsilon},\alpha_{s-}^{\varepsilon},z))-\Phi_m(X_{s-}^{\varepsilon}\!,\alpha_{s-}^{\varepsilon}),X^{\varepsilon}_s-\bar{X}_s\rangle \tilde{N}(ds,dz)\nonumber\\
=:\!\!\!\!\!\!\!\!\!&&\sum^3_{i=1}V^{\varepsilon}_i(t).\label{F4.11}
\end{eqnarray}

According to \eref{F4.19} and \eref{F4.11}, we get
\begin{eqnarray}
\mathbb{E}\big(\sup_{0\leq t\leq T}|X_t^\varepsilon-\bar{X_t}|^{p}\big)\le\!\!\!\!\!\!&& C_{p,T}\left(1\!+\!|x|^{kp}\right)m^{-p}+C_{p,T}\sum^3_{i=1}\mathbb{E}\big(\sup_{0\leq t\leq T}\left|V^{\varepsilon}_i(t)\right|^{p/2}\big).\label{F4.9}
\end{eqnarray}
	
\vspace{-3mm}
~\\
\textbf{Step 3:} In this step, we show the estimate of $V^{\varepsilon}_1(t)$, $V^{\varepsilon}_2(t)$ and $V^{\varepsilon}_3(t)$, respectively. By BDG inequality and Young's inequality, \eref{PhiApp1}, \eref{PhiApp2}, \eref{X} and \eref{barX}, we get
\begin{eqnarray}
\mathbb{E}\big(\sup_{0\leq t\leq T}\left|V^{\varepsilon}_1(t)\right|^{p/2}\big)\leq\!\!\!\!\!\!\!\!&&\varepsilon^{p/2} C_{p,T}\big[1+\mathbb{E}\big(\sup_{0\leq t\leq T}|X_{t}^{\varepsilon}|^{k_p}\big)+\mathbb{E}\big(\sup_{0\leq t\leq T}|\bar X_{t}|^{k_p}\big)\big]\nonumber\\
\leq\!\!\!\!\!\!\!\!&& C_{p,T}\varepsilon^{p/2}\big(1+|x|^{k_p}\big)\label{S5.5}
\end{eqnarray}
and
\begin{eqnarray}
\mathbb{E}\big(\sup_{0\leq t\leq T}\left|V^{\varepsilon}_2(t)\right|^{p/2}\big)
\le\!\!\!\!\!\!\!\!&& \frac14\mathbb{E}\big(\sup_{0\leq t\leq T}|X_t^\varepsilon\!-\!\bar{X_t}|^{p}\big)\!+\! C_{p,T}\varepsilon^p\mathbb{E}\int_0^T\!\!\|\partial_x^{2}\Phi_m(X_{s}^{\varepsilon},\alpha_s^{\varepsilon})\|^p \|\sigma\sigma^{\ast}(X_{s}^{\varepsilon})\|^{p} ds\nonumber\\
\le\!\!\!\!\!\!\!\!&& \frac{1}{4}\mathbb{E}\big(\sup_{0\leq t\leq T}|X_t^\varepsilon-\bar{X_t}|^{p}\big)+C_{p,T}m^p\varepsilon^p\mathbb{E}\int_0^T(1+|X_{s}^{\varepsilon}|^{k_p})ds\nonumber\\
\le\!\!\!\!\!\!\!\!&&\frac{1}{4}\mathbb{E}\big(\sup_{0\leq t\leq T}|X_t^\varepsilon-\bar{X_t}|^{p}\big)+C_{p,T}m^p\varepsilon^p(1+|x|^{k_p}).\label{S5.6}
\end{eqnarray}

Using Kunita's first inequality (see \cite[Theorem 4.4.23]{A}), \eref{PhiApp1} and condition \eref{Finte K}, we have for any $p\geq 4$,
\begin{eqnarray}
&&\mathbb{E}\big(\sup_{0\leq t\leq T}\left|V^{\varepsilon}_3(t)\right|^{p/2}\big)\nonumber\\
\le\!\!\!\!\!\!\!\!&&C\varepsilon^{p/2}\mathbb{E}\Big[\int^T_0\!\!\int_{[0,\infty)}|\Phi_m(X^{\varepsilon}_{s},\alpha^{\varepsilon}_{s}
\!+\!g^{\varepsilon}(X^{\varepsilon}_{s},\alpha^{\varepsilon}_{s},z))-\Phi_m(X^{\varepsilon}_{s},\alpha^{\varepsilon}_{s})|^2|X_s^\varepsilon-\bar{X_s}|^{2} dzds\Big]^{p/4}\nonumber\\
&&+C_p\varepsilon^{p/2}\mathbb{E}\int^T_0\int_{[0,\infty)}|\Phi_m(X^{\varepsilon}_{s},\alpha^{\varepsilon}_{s}+g^{\varepsilon}(X^{\varepsilon}_{s},\alpha^{\varepsilon}_{s},z))-\Phi_m(X^{\varepsilon}_{s},\alpha^{\varepsilon}_{s})|^{p/2}|X_s^\varepsilon-\bar{X_s}|^{p/2}  dzds\nonumber\\
\le\!\!\!\!\!\!\!\!&&\frac{1}{4}\mathbb{E}\big(\sup_{0\leq t\leq T}|X_t^\varepsilon-\bar{X_t}|^{p}\big)\nonumber\\
&&+C_p\varepsilon^{p}\mathbb{E}\Big[\int^T_0\!\!\int_{[0,\infty)}|\Phi_m(X^{\varepsilon}_{s},\alpha^{\varepsilon}_{s}\!+\!g^{\varepsilon}(X^{\varepsilon}_{s},\alpha^{\varepsilon}_{s},z))-\Phi_m(X^{\varepsilon}_{s},\alpha^{\varepsilon}_{s})|^2 dzds\Big]^{p/2}\nonumber\\
&&+C_p\varepsilon^{p}\mathbb{E}\Big[\int^T_0\int_{[0,\infty)}|\Phi_m(X^{\varepsilon}_{s},\alpha^{\varepsilon}_{s}+g^{\varepsilon}(X^{\varepsilon}_{s},\alpha^{\varepsilon}_{s},z))-\Phi_m(X^{\varepsilon}_{s},\alpha^{\varepsilon}_{s})|^{p/2} dzds\Big]^2\nonumber\\
\le\!\!\!\!\!\!\!\!&&\frac{1}{4}\mathbb{E}\big(\sup_{0\leq t\leq T}|X_t^\varepsilon-\bar{X_t}|^{p}\big)+C_p\varepsilon^{p}\mathbb{E}\Big[\int^T_0(1+|X^{\varepsilon}_{s}|^{2k})\int_{[0,K(X^{\varepsilon}_{s})\varepsilon^{-1}]} dzds\Big]^{p/2}\nonumber\\
&&+C_p\varepsilon^{p}\mathbb{E}\Big[\int^T_0(1+|X^{\varepsilon}_{s}|^{\frac{pk}{2}})\int_{[0,K(X^{\varepsilon}_{s})\varepsilon^{-1}]} dzds\Big]^2\nonumber\\
\le\!\!\!\!\!\!\!\!&&\frac{1}{4}\mathbb{E}\big(\sup_{0\leq t\leq T}|X_t^\varepsilon-\bar{X_t}|^{p}\big)+C_p\varepsilon^{p/2}\mathbb{E}\Big[\int^T_0(1+|X^{\varepsilon}_{s}|^{2k})(1+|X^{\varepsilon}_{s}|^k)ds\Big]^{p/2}\nonumber\\
&&+C_p\varepsilon^{p-2}\mathbb{E}\Big[\int^T_0(1+|X^{\varepsilon}_{s}|^{\frac{pk}{2}})(1+|X^{\varepsilon}_{s}|^k)ds\Big]^{2}\nonumber\\
\le\!\!\!\!\!\!\!\!&&\frac{1}{4}\mathbb{E}\big(\sup_{0\leq t\leq T}|X_t^\varepsilon-\bar{X_t}|^{p}\big)+C_{p,T}\left(1+|x|^{k_p}\right)\varepsilon^{p/2}.\label{F4.24}
\end{eqnarray}
Hence, by \eref{F4.9}-\eref{F4.24}, we obtain
\begin{eqnarray*}
\mathbb{E}\big(\sup_{0\leq t\leq T}|X_t^\varepsilon-\bar{X_t}|^{p}\big)\le C_{p,T}\left(1+|x|^{k_p}\right)\left(m^{-p}+m^p\varepsilon^p+\varepsilon^{p/2}\right).
\end{eqnarray*}

Finally, we have \eref{main 1}  by taking $m=[\varepsilon^{-1/2}]$, where $[s]$ means the maximum integral part of $s$. The proof is complete. $\square$

\subsection{Weak convergence of $X^{\varepsilon}$ in $C([0,T],\RR^n)$}

We will take the following steps: Firstly, we study the well-posedness of the corresponding averaged equation Eq.\eref{AR21}. Secondly, we establish the tightness of $X^{\varepsilon}$ in $C([0,T],\RR^n)$. Finally, we employ the martingale problem approach to determine the weak limiting process $X$.

\subsubsection{Well-posedness of equation \eref{AR21}}

\begin{lemma} \label{PMbarX2}
Suppose that \ref{A1}-\ref{A4} hold. Eq.\eref{AR21} has a unique solution $\bar{X}$. Moreover, for any $T>0$ and $p>0$, there exists $C_{p,T}>0$ such that	
\begin{eqnarray}
\mathbb{E}\big(\sup_{0\leq t\leq T}| \bar{X}_t|^{p}\big)\le C_{p,T}(1+|x|^p).  \label{barX2}
\end{eqnarray}
\end{lemma}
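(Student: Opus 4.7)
The plan is to verify the hypotheses of the standard existence and uniqueness theorem for SDEs with monotone coefficients (e.g.\ \cite[Theorem 3.1.1]{LR2015}), which requires (i) a monotonicity inequality for the drift, (ii) a coercivity/linear growth estimate for the drift, (iii) Lipschitz continuity of the diffusion coefficient, and (iv) linear growth of the diffusion coefficient. The monotonicity \eqref{barc1} and the estimate \eqref{Eobarb} for $\bar b$ have already been established in the proof of Lemma \ref{PMbarX} and carry over verbatim to the present setting, since these properties of $\bar b$ do not depend on whether $\sigma$ depends on $\alpha$ or not. Hence the only substantive new task is to study $\bar\sigma$.

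The main obstacle is to prove that $\bar\sigma(x)=[\overline{\sigma\sigma^{\ast}}(x)]^{1/2}$ is Lipschitz continuous in $x$. I would proceed in two steps. First, show that $\overline{\sigma\sigma^{\ast}}(x)=\sum_{j\in\mathbb{S}}\sigma(x,j)\sigma^{\ast}(x,j)\mu^{x}_{j}$ is Lipschitz in $x$. This combines the boundedness and Lipschitz continuity of $\sigma$ (from \ref{A1} and the boundedness condition in \ref{A4}) with the Lipschitz continuity of the invariant measure $x\mapsto\mu^{x}$. The latter follows by letting $t\to\infty$ in the bound
\[
\|p^{x_{1}}_{i\cdot}(t)-p^{x_{2}}_{i\cdot}(t)\|_{\mathrm{var}}\le C\|Q(x_{1})-Q(x_{2})\|_{\ell}\le C|x_{1}-x_{2}|,
\]
which was already derived in \eqref{Var} and uses \ref{A2} together with \eqref{LipQ}. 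Second, use \ref{A4}, specifically the non-degeneracy \eqref{NonD}, to conclude that $\overline{\sigma\sigma^{\ast}}(x)$ is uniformly positive definite in $x$; on the set of uniformly positive definite symmetric matrices, the principal square root $A\mapsto A^{1/2}$ is smooth (in fact real-analytic) and Lipschitz on bounded subsets with a modulus controlled by the lower spectral bound. Composing with the previous step yields the Lipschitz continuity of $\bar\sigma$, and the linear growth of $\bar\sigma$ follows directly from \eqref{ConA14}.

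With these properties in hand, existence and uniqueness of a strong solution $\bar X$ to \eqref{AR21} follow from the standard theory for monotone SDEs. For the moment estimate \eqref{barX2}, I would apply It\^o's formula to $|\bar X_{t}|^{p}$, use the coercivity \eqref{Eobarb} to control the drift term, use the linear growth of $\bar\sigma$ to control the It\^o correction, apply the BDG inequality to handle the stochastic integral, and close the loop with Gronwall's inequality. Since these calculations are standard and essentially identical to the ones behind Lemma \ref{PMbarX}, I would omit the detailed computation in the final write-up, following the same convention as for Lemma \ref{PMX}.
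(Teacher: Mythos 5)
Your proposal is correct and follows essentially the same route as the paper: reduce everything to the Lipschitz continuity of $\bar\sigma$ (the monotonicity \eqref{barc1} and coercivity of $\bar b$ carrying over unchanged from Lemma \ref{PMbarX}), establish Lipschitz continuity of $\overline{\sigma\sigma^{\ast}}$, and then pass to the square root using the uniform non-degeneracy \eqref{NonD}, exactly as the paper does by citing \cite[Lemma A.7]{CLX}. The only (minor) divergence is in the intermediate step: the paper obtains $\overline{\sigma\sigma^{\ast}}\in C^1_b$ via the differentiability of $P^x_t$ from Proposition \ref{DQ}, whereas you get Lipschitz continuity directly from the total-variation Lipschitz bound $\|\mu^{x_1}-\mu^{x_2}\|_{\mathrm{var}}\le C|x_1-x_2|$ obtained by letting $t\to\infty$ in \eqref{Var} — a slightly more elementary argument that yields the same conclusion.
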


\begin{proof}
It is sufficient to prove
\begin{eqnarray}
\|\bar{\sigma}(x)-\bar{\sigma}(y)\|\leq C|x-y|.\label{Lipbarsigma}
\end{eqnarray}
\eref{Lipbarsigma} and \eref{barc1} imply that Eq.\eref{AR21} admits a unique solution. Moreover, \eref{barX2} holds by the same argument in Lemma \ref{PMX}. Hence, we only prove \eref{Lipbarsigma}.

Since
$$(\overline{\sigma\sigma^{\ast}})(x)
	=\sum_{i\in\mathbb{S}}[\sigma\sigma^{\ast}](x,i)\mu^x_{i}=\lim_{t\rightarrow \infty}P^x_t[\sigma\sigma^{\ast}](x,\cdot)(i),$$
$\sigma\in C^1_b(\RR^n\times\mathbb{S},\RR^{n}\otimes\RR^d)$ and \eref{Partialb} yield $\overline{\sigma\sigma^{\ast}}\in C^{1}_b(\RR^n,\RR^n\otimes\RR^n)$. And $\eref{NonD}$ implies that $\overline{\sigma\sigma^{\ast}}$ is non-degenerate, i.e.,
\begin{eqnarray}
			\inf_{x\in\RR^n,z\in \RR^n\backslash\{0\}}\frac{\langle \overline{\sigma\sigma^{\ast}}(x)\cdot z, z\rangle}{|z|^2}>0. \label{NonDbar}
		\end{eqnarray}
By a similar argument as used in \cite[Lemma A.7]{CLX}, we can prove \eref{Lipbarsigma} for $\bar{\sigma}(x)=
	(\overline{\sigma\sigma^{\ast}})^{1/2}(x)$. The proof is complete.	
\end{proof}

\begin{remark}
The non-degenerate condition \eref{NonD} on $\sigma\sigma^{\ast}$ is not necessary, that is $\sigma\sigma^{\ast}$ is allowed to degenerate. In fact, if $\sigma\in C^2_b(\RR^n\times\mathbb{S}, \RR^n\otimes\RR^d)$ and $Q\in C^2(\RR^n,\RR^{m_0}\otimes\RR^{m_0})$ with $\sup_{x\in\RR^n}\sum^2_{k=1}\|\nabla^k Q(x)\|_{\ell}<\infty$, we can prove $\overline{\sigma\sigma^{\ast}}\in C^{2}_b(\RR^n,\RR^n\otimes\RR^n)$, thus \eref{Lipbarsigma} holds by \cite[Theorem 5.2.3]{SV}.
\end{remark}

\begin{remark}\label{HighR}
The higher regularity of $\bar{b}$ and $\bar{\sigma}$ needs more regularity of $b$, $\sigma$ and $Q$. In fact, if ${\color{blue} {\bf H}_{k}}$ ($k=2,3,4$) holds, by using $\bar{b}(x)=\lim_{t\to \infty}P^x_t b(x,\cdot)(i)$, $b\in C^{k}_{p}(\RR^n\times\mathbb{S},\RR^n)$, $Q\in C^k_p(\RR^n,\RR^{m_0}\otimes\RR^{m_0})$ and \eref{PartialP}, we can easily obtain $\bar{b}\in C^{k}_p(\RR^n,\RR^n)$.
Similarly, note that
	$$(\overline{\sigma\sigma^{\ast}})(x)
	=\sum_{i\in\mathbb{S}}[\sigma\sigma^{\ast}](x,i)\mu^x_{i}=\lim_{t\rightarrow \infty}P^x_t[\sigma\sigma^{\ast}](x,\cdot)(i) .$$
$\sigma\sigma^{\ast}\in C^{k}_p(\RR^n\times\mathbb{S},\RR^n\otimes\RR^n)$ and \eref{PartialP} imply $\overline{\sigma\sigma^{\ast}}\in C^{k}_p(\RR^n,\RR^n)$, which together with \eref{NonDbar}, we can get $\bar{\sigma}=(\overline{\sigma\sigma^{\ast}})^{1/2}\in C^k_p(\RR^n,\RR^n\otimes \RR^n)$ by the same argument as in \cite[Lemma A.7]{CLX}.
\end{remark}

\subsubsection{Tightness of $\{X^{\varepsilon}\}$ in $C([0,T];\RR^{n})$ }

\begin{proposition}\label{pro4.6}
 Suppose that  \ref{A1}-\ref{A4} hold. Then for $T>0$, the distribution of process $\{X^\varepsilon\}_{\varepsilon\in(0,1]}$ is tight in $C([0,T];\RR^{n})$.
\end{proposition}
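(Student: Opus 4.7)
The plan is to apply the standard Kolmogorov–Chentsov tightness criterion for continuous $\RR^n$-valued processes on $[0,T]$, which reduces tightness to a uniform moment bound at a single time combined with a H\"older-type estimate of the form $\EE|X^{\varepsilon}_t-X^{\varepsilon}_s|^p\leq C|t-s|^{1+\delta}$ holding uniformly in $\varepsilon\in(0,1]$. Since $X^{\varepsilon}$ has continuous sample paths (the jump component lives only in the $\alpha^{\varepsilon}$-coordinate, while $X^{\varepsilon}$ is driven by an It\^o integral and a Lebesgue integral), this criterion is directly applicable.

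First, I would record that the moment bound at time $0$ is trivial (deterministic initial datum $x$), and more generally Lemma \ref{PMX} gives $\EE(\sup_{0\leq t\leq T}|X^{\varepsilon}_t|^p)\leq C_{p,T}(1+|x|^p)$ uniformly in $\varepsilon\in(0,1]$, for any $p\geq 2$. This handles the one-point moment control required by Kolmogorov's criterion.

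Next, for $0\leq s<t\leq T$ and $p\geq 2$, I would split
\[
X^{\varepsilon}_t-X^{\varepsilon}_s=\int_s^t b(X^{\varepsilon}_r,\alpha^{\varepsilon}_r)\,dr+\int_s^t \sigma(X^{\varepsilon}_r,\alpha^{\varepsilon}_r)\,dW_r.
\]
For the drift piece I would use H\"older's inequality together with $\|b(x,\cdot)\|_\infty\leq C(1+|x|^k)$ from \ref{A1} and Lemma \ref{PMX} to get
\[
\EE\Big|\int_s^t b(X^{\varepsilon}_r,\alpha^{\varepsilon}_r)\,dr\Big|^p\leq |t-s|^{p-1}\int_s^t \EE\|b(X^{\varepsilon}_r,\cdot)\|_\infty^p\,dr\leq C_{p,T}(1+|x|^{kp})\,|t-s|^p.
\]
For the stochastic integral piece I would invoke the BDG inequality and the linear growth of $\sigma$ from \ref{A1} to obtain
\[
\EE\Big|\int_s^t \sigma(X^{\varepsilon}_r,\alpha^{\varepsilon}_r)\,dW_r\Big|^p\leq C_p\,|t-s|^{p/2-1}\int_s^t\EE\|\sigma(X^{\varepsilon}_r,\cdot)\|_\infty^p\,dr\leq C_{p,T}(1+|x|^p)\,|t-s|^{p/2}.
\]
Combining the two bounds yields $\EE|X^{\varepsilon}_t-X^{\varepsilon}_s|^p\leq C_{p,T}(1+|x|^{kp})|t-s|^{p/2}$ uniformly in $\varepsilon\in(0,1]$.

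Choosing any $p>2$ makes $p/2>1$, so the Kolmogorov–Chentsov criterion applies and gives tightness of $\{X^{\varepsilon}\}_{\varepsilon\in(0,1]}$ in $C([0,T];\RR^n)$. There is no real obstacle here: the fast switching in $\alpha^{\varepsilon}$ might at first sight be worrying, but it enters only through the uniformly (in $i\in\mathbb{S}$) controlled coefficients $b(\cdot,\alpha^{\varepsilon}_r)$ and $\sigma(\cdot,\alpha^{\varepsilon}_r)$, so the estimates above are insensitive to $\varepsilon$. The only mild point to keep in mind is to take $p$ strictly larger than $2$ so that the exponent $p/2$ exceeds $1$, which is needed to apply the Kolmogorov criterion.
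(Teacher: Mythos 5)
Your proof is correct and follows essentially the same route as the paper: the paper also splits the increment into drift and stochastic-integral parts, bounds $\EE\big(\sup_{t\leq s\leq t+\delta}|X^{\varepsilon}_s-X^{\varepsilon}_t|^{4}\big)\leq C_T(1+|x|^{4(k+1)})\delta^{2}$ using \ref{A1} and Lemma \ref{PMX}, and concludes via Chebyshev and Billingsley's criterion. Your use of the Kolmogorov--Chentsov criterion with two-point increments and general $p>2$ is just a different (equivalent) packaging of the same moment estimate.
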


\begin{proof}
In order to prove  $\{X^\varepsilon\}_{\varepsilon\in(0,1]}$ being tight in $C([0,T];\RR^{n})$, by using the tight criterion in $C([0,T];\RR^{n})$ (cf.~\cite[Theorem 7.3]{B1999}),  we only check the following conditions.

(i) For any $\eta>0$, there exist $K>0$ and $\varepsilon_0>0$ such that
\begin{equation}\label{T1}
\sup_{0<\varepsilon<\varepsilon_0}\mathbb{P}\big(|X^\varepsilon_0|\geq K\big)\leq\eta.
\end{equation}

(ii) For any $\theta>0,\eta>0$, there exist constants $\varepsilon_0>0,\delta>0$ such that
\begin{equation}\label{T2}
\sup_{0<\varepsilon<\varepsilon_0}\mathbb{P}\big(\sup_{t_1,t_2\in[0,T], |t_1-t_2|<\delta}|X^\varepsilon_{t_1}-X^\varepsilon_{t_2}|\geq \theta\big)\leq \eta.
\end{equation}

Since $X^\varepsilon_0=x$, \eref{T1} holds obviously. By \cite[P83, (7.12)]{B1999}, a sufficient condition that guarantee \eref{T2} holds is following: If for any positive constants $\theta,\eta$, there exist constants $\varepsilon_0,\delta$ such that for any $0\leq t\leq t+\delta\leq T$,
\begin{equation}\label{T3}
\sup_{0<\varepsilon<\varepsilon_0}\delta^{-1}\mathbb{P}\big(\sup_{t\leq s\leq t+\delta}|X_{s}^{\varepsilon}-X_{t}^{\varepsilon}|\geq \theta\big)\leq \eta.
\end{equation}
%Then condition \eref{T2} holds.
Hence, we only prove \eref{T3}. For any $\delta>0$, by \ref{A1} and \eref{X}, we have
\begin{eqnarray*}
\mathbb{E}\big(\sup_{t\leq s\leq t+\delta}|X_{s}^{\varepsilon}\!-\!X_{t}^{\varepsilon}|^{4}\big)
\!\leq\!\!\!\!\!\!\!\!\!&&C\EE\Big[\sup_{t\leq s\leq t+\delta}\Big|\int_t^{s}b(X_{r}^{\varepsilon},\alpha^{\varepsilon}_{r})dr\Big|^4\Big]
\!+\!C\EE\Big[\sup_{t\leq s\leq t+\delta}\Big|\int_t^{s}\sigma(X_{r}^{\varepsilon},\alpha^{\varepsilon}_{r})dW_r\Big|^4\Big]\\
\leq\!\!\!\!\!\!\!\!&&C \delta^4\sup_{r\in[0,T]}\mathbb{E}|b(X_{r}^{\varepsilon},\alpha^{\varepsilon}_{r})|^{4}
+C\mathbb{E}\Big(\int_t^{t+\delta} \|\sigma(X_{r}^{\varepsilon},\alpha^{\varepsilon}_{r})\|^2 dr\Big)^{2}\nonumber\\
\leq\!\!\!\!\!\!\!\!&&C \delta^{4}\big(1+ \sup_{r\in[0,T]}\EE|X_{r}^{\varepsilon}|^{4(k+1)}\big)+C \delta^{2}\big(1+ \sup_{r\in[0,T]}\EE|X_{r}^{\varepsilon}|^{4}\big)\nonumber\\
\leq\!\!\!\!\!\!\!\!&& C_{T}\left(1+|x|^{4(k+1)}\right) \delta^{2}.
\end{eqnarray*}
Chebyshev's inequality implies that, for any $\theta>0$,
$$
\delta^{-1}\PP\big(\sup_{t\leq s\leq t+\delta}|X_{s}^{\varepsilon}-X_{t}^{\varepsilon}|\geq \theta\big)\leq \frac{C_T\left(1+|x|^{4(k+1)}\right) \delta}{\theta^4}.
$$
This show that \eref{T3} holds. The proof is complete.
\end{proof}

\subsubsection{Proof of Theorem \ref{main result 1.2}}

We prove our second result in the averaging principle.

\vspace{-3mm}
~~\\
\textbf{Proof of Theorem \ref{main result 1.2}}  Using Proposition \ref{pro4.6}, it's sufficient to prove that, for any sequence $\varepsilon_k\to 0$, there is a subsequence (we will still keep denoting by $\varepsilon_{k}$) such that $X^{\varepsilon_k}$ converges weakly to $X$ (short for $X^{\varepsilon_k}{\overset{W}\longrightarrow} X$) in $C([0,T];\RR^{n})$, which is the unique solution of Eq.\eref{AR21}. In order to do this, we shall apply the martingale problem approach to characterize $X$.

%\vspace{0.1cm}
Let $\Psi_{t_0}(\cdot)$ be a $\sigma\{\varphi_t, \varphi\in C([0,T],\RR^{n}), t\leq t_0\}$-measurable bounded continuous function on $C([0,T],\RR^{n})$. It is sufficient to prove that for any $t_0 \geq 0$, any $\Psi_{t_0}(\cdot)$ and $U\in C^3_b(\RR^{n})$, the following assertion holds:
\begin{eqnarray}
\EE\left[\left(U(X_t)-U(X_{t_0})-\int^t_{t_0}\bar{\mathcal{L}}U(X_s)ds\right)\Psi_{t_0}(X)\right]=0,\quad t\leq t_0\leq T,\label{F3.33}
\end{eqnarray}
where $\bar{\mathcal{L}}$ is the generator of Eq. \eref{AR21}, i.e.,
\begin{eqnarray}
\bar{\mathcal{L}}U(x):=\sum^n_{i=1}\partial_{x_i}U(x)\bar{b}_i(x)+\frac{1}{2}\sum^{n}_{i=1}\sum^n_{j=1}\partial_{x_i}\partial_{x_j} U(x)(\bar{\sigma}\bar{\sigma})_{ij}(x).\label{LD}
\end{eqnarray}

Applying It\^{o}'s formula, we have
\begin{eqnarray*}
U(X^{\varepsilon_k}_t)=\!\!\!\!\!\!\!\!&&U(X^{\varepsilon_k}_{t_0})+\int^t_{t_0}\langle\partial_x U(X^{\varepsilon_k}_s),b(X^{\varepsilon_k}_s, \alpha^{\varepsilon_k}_s)\rangle ds+\int^t_{t_0}\langle\partial_x U(X^{\varepsilon_k}_s),\sigma(X^{\varepsilon_k}_s, \alpha^{\varepsilon_k}_s)d W_s\rangle\nonumber\\
&&+\frac{1}{2}\int^t_{t_0}\text{Tr}\left[(\sigma\sigma^{\ast})(X^{\varepsilon_k}_s,\alpha^{\varepsilon_k}_s)\cdot\partial^2_x U(X^{\varepsilon_k}_s)\right]ds.
\end{eqnarray*}
Using $X^{\varepsilon_k}{\overset{W}\longrightarrow} X$ in $C([0,T];\RR^{n})$, it is easy to see
$$\lim_{k\to\infty} \EE\left\{\left[U(X^{\varepsilon_k}_t)-U(X^{\varepsilon_k}_{t_0})\right]\Psi_{t_0}(X^{\vare_k})\right\}
=\EE\left\{\left[U(X_t)-U(X_{t_0})\right]\Psi_{t_0}(X)\right\}$$
$$\lim_{k\to \infty} \EE\Big\{\Big[\int^t_{t_0}\langle\partial_x U(X^{\varepsilon_k}_s),\bar{b}(X^{\varepsilon_k}_s)\rangle ds\Big]\Psi_{t_0}(X^{\varepsilon_k})\Big\}=\EE\Big\{\Big[\int^t_{t_0}\langle\partial_x U(X_s),\bar{b}(X_s)\rangle ds\Big]\Psi_{t_0}(X)\Big\},$$
\begin{eqnarray*}
&&\lim_{k\to \infty} \EE\Big\{\Big[\int^t_{t_0}\text{Tr}\left(\bar{\sigma}\bar{\sigma}(X^{\varepsilon_k}_s)\cdot\partial^2_x U(X^{\varepsilon_k}_s)\right)ds\Big]\Psi_{t_0}(X^{\varepsilon_k})\Big\}\\
=\!\!\!\!\!\!\!\!&&\EE\Big\{\Big[\int^t_{t_0}\text{Tr}\left((\bar{\sigma}\bar{\sigma})(X_s)\cdot\partial^2_x U(X_s)\right)ds\Big]\Psi_{t_0}(X)\Big\}
\end{eqnarray*}
and
\begin{eqnarray*}
\EE\left\{\left[\int^t_{t_0}\langle\partial_x U(X^{\varepsilon_k}_s),\sigma(X^{\varepsilon_k}_s, \alpha^{\varepsilon_k}_s)d W_s\rangle\right]\Psi_{t_0}(X^{\vare_k})\right\}=0.
\end{eqnarray*}

In order to prove \eref{F3.33}, it is sufficient to prove that
\begin{eqnarray}
&&\lim_{k\to \infty} \EE\Big|\int^t_{t_0}\left[\langle\partial_x U(X^{\varepsilon_k}_s),b(X^{\varepsilon_k}_s, \alpha^{\varepsilon_k}_s)\rangle +\text{Tr}\left(\sigma\sigma^{\ast}(X^{\varepsilon_k}_s,\alpha^{\varepsilon_k}_s)\cdot\partial^2_x U(X^{\varepsilon_k}_s)\right)\right]ds\nonumber\\
&&\quad\quad\quad\quad -\int^t_{t_0}\left[\langle\partial_x U(X^{\varepsilon_k}_s),\bar{b}(X^{\varepsilon_k}_s)\rangle+\text{Tr}\left(\bar{\sigma}\bar{\sigma}(X^{\varepsilon_k}_s)\cdot\partial^2_x U(X^{\varepsilon_k}_s)\right)\right]ds\Big|=0.\label{F3.34}
\end{eqnarray}
Put $H(x,i):=G(x,i)-\bar{G}(x)$ with
$$G(x,i):=\langle\partial_x U(x),b(x, i)\rangle+\text{Tr}\left[(\sigma\sigma^{\ast})(x,i)\cdot\partial^2_x U(x)\right]$$
and
$$\bar{G}(x):=\langle\partial_x U(x),\bar b(x)\rangle+\text{Tr}\left[\bar{\sigma}\bar{\sigma}(x)\cdot\partial^2_x U(x)\right].$$
Then $H$ satisfies the "\emph{central condition}" obviously. Using \ref{A1} and $U\in C^3_b(\RR^{n})$, it is easy to check $H\in C^1_p(\RR^n\times\mathbb{S},\RR^n)$.
%$$|H(x_1,i)-H(x_2,i)|\leq C(1+|x_1|^k+|x_2|^k)|x_1-x_2|,$$
%$$ \|H(x,\cdot)\|_{\infty}\leq C(1+|x|^k).$$
Applying the Poisson equation and mollifying argument as in the proof of Th.\ref{main result 1}, it's easy to prove \eref{F3.34}.

Finally, Lemma \ref{PMbarX} implies that \eref{AR21} has a unique strong solution. Hence, the weak solution is also unique.
%uniqueness can be inferred by Yamada-Watanabe principle (refer to \cite[Chapter 5, Proposition 3.20]{KS1991}).
Since the uniqueness of solutions to the martingale problem with the operator $\bar{\mathcal{L}}$ is equivalent to the uniqueness of weak solution of \eref{AR21}, the limiting process $X$ satisfying \eref{F3.33} is the unique solution of \eref{AR21} in the weak sense. The proof is complete. $\square$

\begin{remark}
Th.\ref{main result 1.2} is equivalent to
\begin{eqnarray*}
\lim_{\varepsilon\rightarrow \infty}|\mathbb{E}f(X^{\varepsilon})-\mathbb{E}f(\bar{X})|=0,\quad \forall f\in\mathbb{C},
\end{eqnarray*}
where $\mathbb{C}$ is a class of bounded continuous functions on $C([0,T],\RR^n)$.
The martingale problem method appears to be ineffective in attaining an convergence order. Consequently, an intriguing inquiry arises as to how one can attain a satisfactory convergence order in such situation.
\end{remark}

\subsection{Weak convergence of $X^{\varepsilon}_t$ in $\RR^n$ }
\vspace{0.1cm}
In this subsection, we study the weak convergence \eref{WeakA2}. The corresponding averaged equation is still Eq. \eref{AR21}. In order to achieve a satisfactory convergence order, we require an additional ${\color{blue} {\bf H}_{4}}$ .

Now, we consider the following Kolmogorov equation:
\begin{equation}\left\{\begin{array}{l}\label{KE1}
			\displaystyle
			\partial_t u(t,x)=\bar{\mathcal{L}} u(t,x),\quad t\in[0, T], \\
			u(0, x)=\phi(x),
		\end{array}\right.
\end{equation}
where $\phi\in C^{4}_p(\RR^{n})$ and $\bar{\mathcal{L}}$ is the infinitesimal generator of Eq.\eref{AR21}, which is defined in \eref{LD}. Obviously, \eref{KE1} admits a unique solution $u$ given by
$$
u(t,x)=\EE\phi(\bar{X}^x_t),\quad t\geq 0.
$$
	
Following the same argument in \cite[Lemma 5.1]{SSWX2023},  by direct calculation, we can get the following regularity estimates of the solution $u$ of Eq.\eref{KE1}.

\begin{lemma} \label{Lemma 5.1}
For unit vectors $l_1,l_2,l_3,l_4\in \RR^n$ and $T>0$, there exist $C_T>0$, $k>0$ such that for $x\in\RR^n$,
$$\sup_{0\leq t\leq T}|\partial_{x} u(t,x)\cdot l_1|\leq C_{T}\left(1+|x|^{k}\right),~~~\sup_{0\leq t\leq T}|\partial^2_{x} u(t,x)\cdot (l_1,l_2)|\leq C_T\left(1+|x|^{k}\right),$$
$$\sup_{0\leq t\leq T}|\partial^3_{x} u(t,x)\cdot (l_1,l_2,l_3)|\leq C_{T}\left(1+|x|^{k}\right),~~~\sup_{0\leq t\leq T}|\partial^4_{x} u(t,x)\cdot (l_1,l_2,l_3,l_4)|\leq C_{T}\left(1+|x|^{k}\right), $$
$$\sup_{0\leq t\leq T}|\partial_t(\partial_x u(t,x))\cdot l_1|\leq C_T \left(1+|x|^{k}\right),~~~\sup_{0\leq t\leq T}|\partial_t(\partial^2_x u(t,x))\cdot (l_1,l_2)|\leq C_T\left(1+|x|^{k}\right).$$
\end{lemma}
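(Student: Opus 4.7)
The plan is to use the Feynman--Kac representation $u(t,x) = \EE\phi(\bar{X}^x_t)$ and differentiate it in $x$ via the variational (Jacobian) SDEs associated with the averaged equation \eref{AR21}. Under Hypothesis ${\bf H}_4$, Remark \ref{HighR} gives $\bar{b},\bar{\sigma}\in C^4_p$, so the flow $x\mapsto \bar{X}^x_t$ is four times Gâteaux differentiable. Writing $\eta^{x,l}_t:=\partial_x \bar{X}^x_t\cdot l$, the first Jacobian satisfies the linear SDE
$$
d\eta^{x,l}_t = \nabla\bar{b}(\bar{X}^x_t)\cdot\eta^{x,l}_t\,dt + \nabla\bar{\sigma}(\bar{X}^x_t)\cdot\eta^{x,l}_t\,dW_t,\qquad \eta^{x,l}_0=l,
$$
and the higher derivatives $\partial^k_x \bar{X}^x_t$ ($k=2,3,4$) satisfy analogous inhomogeneous linear SDEs whose forcing terms are multilinear combinations of lower-order Jacobians and higher derivatives of $\bar{b},\bar{\sigma}$ evaluated at $\bar{X}^x_t$.

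The second step is to obtain moment bounds $\sup_{0\le t\le T}\EE|\partial^k_x\bar{X}^x_t\cdot(l_1,\ldots,l_k)|^p \le C_{p,T}(1+|x|^{k_p})$ for $k=1,\ldots,4$ by induction on $k$. For $k=1$, Itô's formula applied to $|\eta^{x,l}_t|^p$ together with the one-sided Lipschitz property \eref{barc1} of $\bar{b}$, the polynomial growth of $\nabla\bar{\sigma}$, and the moment estimate \eref{barX2}, reduces the bound to a Gronwall argument. For $k\ge 2$ the inhomogeneous terms are products of lower-order Jacobians and polynomially growing derivatives of the coefficients, so Hölder's inequality and the induction hypothesis close the estimate. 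By the Faà di Bruno formula, $\partial^k_x u(t,x)\cdot(l_1,\ldots,l_k)$ is then a finite sum of terms of the form $\EE[(\partial^j\phi)(\bar{X}^x_t)\cdot(\partial^{k_1}_x\bar{X}^x_t,\ldots,\partial^{k_j}_x\bar{X}^x_t)]$ with $k_1+\cdots+k_j=k$ and $j\le k$. Since $\phi\in C^4_p$, another application of Hölder together with the moment bounds yields the four spatial estimates in the lemma.

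Finally, the time-derivative bounds follow by differentiating the Kolmogorov equation \eref{KE1}: $\partial_t(\partial^j_x u)=\partial^j_x(\bar{\mathcal{L}}u)$, and the right-hand side is a linear combination of $\partial^i_x u$ for $i\le j+2$, with coefficients that are products of derivatives of $\bar{b},\bar{\sigma}$ (polynomially bounded by ${\bf H}_4$). Thus $\partial_t\partial_x u$ uses only the spatial bounds up to order $3$ and $\partial_t\partial^2_x u$ only those up to order $4$, both of which are already in hand. The only genuine subtlety lies in handling the one-sided monotonicity of $\bar{b}$ in the $k=1$ moment estimate, exactly as was exploited in the well-posedness Lemma \ref{PMbarX2}; the remainder of the proof is a mechanical induction mirroring \cite[Lemma 5.1]{SSWX2023}, which is why we omit the detailed calculation.
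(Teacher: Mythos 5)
Your proposal is correct and follows exactly the route the paper intends: the paper defers the proof to the "same argument as in [SSWX2023, Lemma 5.1]", and its own proof of the analogous Lemma \ref{Lemma 5.3} confirms this is precisely the variational-SDE method — differentiate the flow $x\mapsto\bar{X}^x_t$, establish moment bounds for the Jacobians $\partial^k_x\bar{X}^x_t$ using the one-sided monotonicity \eref{barc1} together with the polynomial growth of $\bar b,\bar\sigma\in C^4_p$ from Remark \ref{HighR}, and then combine with $\phi\in C^4_p$ via the chain rule, obtaining the time-derivative bounds from the Kolmogorov equation. No gaps; your treatment of the $k=1$ case via the one-sided Lipschitz condition and the inductive closing of the higher-order estimates is the same calculation the paper omits.
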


%\vspace{2mm}
Now, we can prove our third result in the averaging principle.

\vspace{-3mm}
~~\\
\textbf{Proof of Theorem \ref{main result 1.3}}:
Fixed $t\in (0,T]$, let $\tilde{u}^t(s,x):=u(t-s,x)$, $0\le s\le t$, It\^{o}'s formula implies
	\begin{eqnarray*}
		\tilde{u}^t(t, X^{\vare}_t)=\!\!\!\!\!\!\!\!&&\tilde{u}^t(0,x)+\int^t_0 \partial_s \tilde{u}^t(s, X^{\vare}_s )ds+\int^t_0 \mathcal{L}_{1}\tilde{u}^t(s, \cdot)(\alpha^{\vare}_s,X^{\vare}_s) ds+\tilde{M}_t,
	\end{eqnarray*}
where
	\begin{eqnarray*}
	\mathcal{L}_{1}\phi(\alpha,x):=\langle b(x,\alpha), \nabla  \phi(x)\rangle+\frac{1}{2}\text{Tr}[\sigma\sigma^{*}(x,\alpha)\nabla^2\phi(x)]
	\end{eqnarray*}
and $\tilde{M}_t:=\int^t_0 \langle \partial_{x}\tilde{u}^t(s,X_{s}^{\vare}),\sigma(X_{s}^{\vare}, \alpha_{s}^{\vare})dW_{s}^{1} \rangle$  is a $\mathcal{F}_{t}$-martingale.
Note that $\tilde{u}^t(t, X^{\vare}_t)=\phi(X^{\vare}_t)$, $\tilde{u}^t(0, x)=\EE\phi(\bar{X}^{x}_t)$ and $\partial_s \tilde{u}^t(s, X^{\vare}_s )=-\bar{\mathcal{L}} \tilde{u}^t(s,\cdot)(X^{\vare}_s)$, we have
	\begin{eqnarray}
		\left|\EE\phi(X^{\vare}_{t})-\EE\phi(\bar{X}_{t})\right|
 =\!\!\!\!\!\!\!\!&&\left|\EE\int^t_0 -\bar{\mathcal{L}} \tilde{u}^t(s, \cdot)(X^{\vare}_s )ds+\EE\int^t_0 \mathcal{L}_{1}\tilde{u}^t(s, \cdot)(\alpha^{\vare}_s,X^{\vare}_s)ds\right|\nonumber\\
=\!\!\!\!\!\!\!\!&&\Big|\EE\!\int^t_0 \!\langle b(X^{\vare}_s,\alpha^{\vare}_s)\!-\!\bar{b}(X^{\vare}_s),\partial_x \tilde{u}^t(s, X^{\vare}_s )\rangle\nonumber\\
		&&\quad\quad\quad\quad +\frac{1}{2}\text{Tr}\big[(\sigma\sigma^{\ast}(X^{\vare}_s,\alpha^{\vare}_s)\!-\!\bar{\sigma}\bar{\sigma}(X^{\vare}_s))\partial_x^{2}\tilde{u}^t(s, X^{\vare}_s)\big]  ds\Big| .  \label{F5.11}
	\end{eqnarray}
	
For $s\in [0,t], x\in\RR^{n},i\in\mathbb{S}$, put
\begin{eqnarray*}
F^t(s,x,i):=\langle b(x,i)-\bar{b}(x), \partial_x \tilde{u}^t(s, x)\rangle+\frac{1}{2}\text{Tr}\big[\left(\sigma\sigma^{*}(x,i)-\bar{\sigma}\bar{\sigma}(x)\right)\partial_x^{2}\tilde{u}^t(s,x)\big].
	\end{eqnarray*}
Then $F^t(s,\cdot,\cdot)$ satisfies the "\emph{central condition}". Lemma \ref{Lemma 5.1} and ${\color{blue} {\bf H}_{4}}$  imply $F^t(s,\cdot,\cdot) \in C^{2}_{p}(\RR^n\times\mathbb{S},\RR^n)$ and
$$\sup_{s\in [0,t],i\in\mathbb{S}}|\partial_sF^t(s,x,i)|\leq C_T(1+|x|^k).$$
By Th.\ref{Poisson}, the Poisson equation
	\begin{eqnarray}
		-Q(x)\tilde{\Phi}^t(s,x,\cdot)(i)=F^t(s,x,i)\label{WPE1}
	\end{eqnarray}
admits a solution $\tilde{\Phi}^t$. Moreover, for $T>0$, $t\in [0,T]$, there exist $C_T, k>0$ such that the following estimates hold:
	\begin{eqnarray}
	&&\sup_{0\leq s\leq t}\left\{\|\tilde{\Phi}^t(s,x,\cdot)\|_{\infty}+\|\partial_s \tilde{\Phi}^t(s,x,\cdot)\|_{\infty}+\|\partial_x \tilde{\Phi}^t(s,x,\cdot)\|_{\infty}+\| \partial_x^{2}\tilde{\Phi}^t(s,x, \cdot)\|_{\infty}\right\}\nonumber\\
	&&\leq C_{T}(1+|x|^{k}). \label{WR1}
	\end{eqnarray}
	
Using It\^o's formula and taking expectation on both sides, we get
	\begin{eqnarray*}
		\EE\tilde{\Phi}^t(t, X_{t}^{\vare},\alpha^{\vare}_{t})=\!\!\!\!\!\!\!\!&&\tilde \Phi^t(0, x,\alpha)+\EE\int^t_0 \partial_s \tilde{\Phi}^t(s, X_{s}^{\vare},\alpha^{\vare}_{s})ds\\
		&&	+\EE\int^t_0\mathcal{L}_{1}\tilde\Phi^t(s, \cdot,\alpha^{\vare}_{s})(\alpha^{\vare}_{s},X_{s}^{\vare})ds
		+\frac{1}{\vare}\EE\int^t_0 Q(X_{s}^{\vare})\tilde{\Phi}^t(s, X_{s}^{\vare},\cdot)(\alpha^{\vare}_{s})ds,
	\end{eqnarray*}
which implies
	\begin{eqnarray}
	-\EE\int^t_0 Q(X_{s}^{\vare})\tilde\Phi^t(s, X_{s}^{\vare},\cdot)(\alpha^{\vare}_{s})ds
		\!=\!\!\!\!\!\!\!\!\!&&\vare\Big[\tilde{\Phi}^t(0, x,\alpha)\!-\!\EE\tilde{\Phi}^t(t, X_{t}^{\vare},\alpha^{\vare}_{t})\!+\!\EE\!\int^t_0\!\! \partial_s \tilde{\Phi}^t(s, X_{s}^{\vare},\cdot)(\alpha^{\vare}_{s})ds\nonumber\\
		&&~~~+\EE\int^t_0\mathcal{L}_{1}\tilde{\Phi}^t(s, \cdot,\alpha^{\vare}_{s})(\alpha^{\vare}_{s},X_{s}^{\vare})ds\Big].\label{F3.391}
	\end{eqnarray}
\eref{F5.11}, \eref{WPE1} and \eref{F3.391} imply that
	\begin{eqnarray*}
		&&\sup_{0\leq t\leq T}\left|\EE\phi(X^{\vare}_{t})-\EE\phi(\bar{X}_{t})\right|\\
=\!\!\!\!\!\!\!\!&&\sup_{0\leq t\leq T}\left|\EE\int^t_0 \left[F^t(s,X_{s}^{\vare},\alpha^{\vare}_{s})\right]ds\right|\\
=\!\!\!\!\!\!\!\!&&\sup_{0\leq t\leq T}\left|\EE\int^t_0 Q(X_{s}^{\vare})\tilde{\Phi}^t(s, X_{s}^{\vare},\cdot)(\alpha^{\vare}_{s})ds\right|\\
\leq\!\!\!\!\!\!\!\!&&\vare\Big[\sup_{0\leq t\leq T}|\tilde{\Phi}^t(0, x,\alpha)|+\sup_{0\leq t\leq T}\left|\EE\tilde{\Phi}^t(t, X_{t}^{\vare},\alpha^{\vare}_{t})\right|+\sup_{0\leq t\leq T}\EE\int^t_0\left|\partial_s \tilde{\Phi}^t(s, X_{s}^{\vare},\alpha^{\vare}_{s})\right|ds.\\
		&&~~~~ +\sup_{0\leq t\leq T}\EE\int^t_0\left|\mathcal{L}_{1}\tilde{\Phi}^t(s, \cdot,\alpha^{\vare}_{s})(\alpha^{\vare}_{s},X_{s}^{\vare})\right|ds\Big].
	\end{eqnarray*}
Finally, using \eref{WR1}, we obtain that, for some $k>0$,
	\begin{eqnarray*}
		\sup_{0\leq t\leq T}\left|\EE\phi(X^{\vare}_{t})-\EE\phi(\bar{X}_{t})\right|\leq C_{T}\left(1+|x|^{k}\right)\vare.
	\end{eqnarray*}
	The proof is complete. $\square$

\section{Proofs of central limit theorem}

In this section, we will study CLT of Eq.\eref{e:eqn2}. Specifically, we will provide two convergences.  (1) weak convergence of $(X^{\varepsilon}-\bar{X})/\sqrt{\varepsilon}$ in $C([0,T],\RR^n)$. (2) weak convergence of $(X_t^{\varepsilon}-\bar{X}_t)/\sqrt{\varepsilon}$ in $\RR^n$ for any fixed $t>0$. These convergences are based on Th.\ref{main result 1}, which requires $\sigma(x,i)=\sigma(x)$.

\subsection{Weak convergence of $(X^{\varepsilon}-\bar{X})/\sqrt{\varepsilon}$ in $C([0,T],\RR^n)$}

In this subsection, we study the weak convergence \eref{WeakA1} of $X^{\varepsilon}$ in $C([0,T],\RR^n)$.

\subsubsection{Construction of the auxiliary process} \label{S4.1}
Eq.\eref{e:eqn2} and Eq.\eref{AR1} imply that  the deviation process $Z_t^\varepsilon:=(X^{\varepsilon}_t-\bar{X}_t)/{\sqrt{\varepsilon}}$ satisfies the equation:
\begin{equation*}
\left\{ \begin{aligned}
dZ_t^\varepsilon=&\frac{1}{\sqrt{\varepsilon}}\left[\left(b(X^{\varepsilon}_t,\alpha^{\varepsilon}_t)-\bar{b}(X^{\varepsilon}_t)\right)
+\left(\bar{b}(X^{\varepsilon}_t)-\bar{b}(\bar{X}_t)\right)\right]dt+\frac{1}{\sqrt{\varepsilon}}\left(\sigma(X^{\varepsilon}_t)-\sigma(\bar{X}_t)\right)dW_t,\\
Z_0^\varepsilon=&~0.
\end{aligned} \right.
\end{equation*}
For $p>0,T>0$,  (\ref{main 1}) implies that there exist constants $C_{p,T}>0, k_p>0$ such that
\begin{eqnarray}
\sup_{\epsilon\in(0,1]}\EE\big(\sup_{0\leq t\leq T}|Z_t^\varepsilon|^p\big)\leq C_{p,T}\left(1+|x|^{k_p}\right).\label{supZvare}
\end{eqnarray}

Let auxiliary process $\eta^\varepsilon$ satisfy the linear equation:
\begin{equation}\label{e6}
\left\{ \begin{aligned}
d\eta_t^\varepsilon=&~\frac{1}{\sqrt{\varepsilon}}\left(b(X^{\varepsilon}_t,\alpha^{\varepsilon}_t)-\bar{b}(X^{\varepsilon}_t)+\sqrt{\varepsilon}\nabla\bar{b}(\bar{X}_t)\cdot\eta_t^\varepsilon \right)dt+\left(\nabla\sigma(\bar{X}_t)\cdot\eta_t^\varepsilon\right) dW_t,\\
\eta_0^\varepsilon=&~0.
\end{aligned} \right.
\end{equation}
We will study the limitation of the difference process $Z^\varepsilon-\eta^\varepsilon$ in $C([0,T];\RR^n)$ for $T>0$.

\begin{proposition}\label{pro5.1}
Suppose that $\sigma(x,i)=\sigma(x)$ and \ref{A1}-\ref{A3} and ${\color{blue} {\bf H}_{2}}$  hold. Then for $T>0$, there exist $C_T,k>0$ such that
\begin{eqnarray}
\EE\big(\sup_{0\leq t\leq T}|Z_t^\varepsilon-\eta_t^\varepsilon|^2\big)\leq C_T(1+|x|^{k})\varepsilon.\label{F4.2}
\end{eqnarray}
\end{proposition}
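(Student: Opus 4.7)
The plan is to derive an SDE for the difference $D^\varepsilon_t:=Z^\varepsilon_t-\eta^\varepsilon_t$, isolate a linear part (in $D^\varepsilon$) and a remainder of size $O(\sqrt{\varepsilon}|Z^\varepsilon|^2)$, and then close the estimate by It\^o's formula combined with Gronwall's inequality, using the uniform moment bound \eqref{supZvare}.

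First I would subtract \eqref{e6} from the displayed equation for $Z^\varepsilon$. The fast-fluctuation term $\varepsilon^{-1/2}(b(X^\varepsilon,\alpha^\varepsilon)-\bar b(X^\varepsilon))$ cancels, leaving
\begin{eqnarray*}
dD^\varepsilon_t=\Big[\tfrac{1}{\sqrt{\varepsilon}}\big(\bar b(X^\varepsilon_t)-\bar b(\bar X_t)\big)-\nabla\bar b(\bar X_t)\cdot\eta^\varepsilon_t\Big]dt
+\Big[\tfrac{1}{\sqrt{\varepsilon}}\big(\sigma(X^\varepsilon_t)-\sigma(\bar X_t)\big)-\nabla\sigma(\bar X_t)\cdot\eta^\varepsilon_t\Big]dW_t.
\end{eqnarray*}
Using $X^\varepsilon_t-\bar X_t=\sqrt{\varepsilon}Z^\varepsilon_t$ and a first-order Taylor expansion of $\bar b$ and $\sigma$ about $\bar X_t$, I would rewrite these brackets as $\nabla\bar b(\bar X_t)\cdot D^\varepsilon_t+\varepsilon^{-1/2}R^b_t$ and $\nabla\sigma(\bar X_t)\cdot D^\varepsilon_t+\varepsilon^{-1/2}R^\sigma_t$, where the Taylor remainders satisfy
\begin{eqnarray*}
|R^b_t|+\|R^\sigma_t\|\leq C\big(1+|X^\varepsilon_t|^k+|\bar X_t|^k\big)|X^\varepsilon_t-\bar X_t|^2 = C\big(1+|X^\varepsilon_t|^k+|\bar X_t|^k\big)\varepsilon|Z^\varepsilon_t|^2,
\end{eqnarray*}
thanks to $\bar b,\sigma\in C^2_p$ (this uses $\mathbf{H}_2$ together with Remark \ref{HighR} for $\bar b$, while $\sigma$ does not need averaging by assumption). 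Hence $\varepsilon^{-1/2}|R^b_t|$ and $\varepsilon^{-1/2}\|R^\sigma_t\|$ are of order $\sqrt{\varepsilon}(1+|X^\varepsilon_t|^k+|\bar X_t|^k)|Z^\varepsilon_t|^2$.

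Next I would apply It\^o's formula to $|D^\varepsilon_t|^2$. The linear-in-$D^\varepsilon$ parts contribute a term bounded by $C(1+|\bar X_t|^{k_1})|D^\varepsilon_t|^2$, thanks to the polynomial growth of $\nabla\bar b$ and $\nabla\sigma$, while the remainder parts contribute (via Cauchy-Schwarz and $2ab\leq a^2+b^2$) at most
\begin{eqnarray*}
|D^\varepsilon_t|^2+C\varepsilon\big(1+|X^\varepsilon_t|^{2k}+|\bar X_t|^{2k}\big)|Z^\varepsilon_t|^4.
\end{eqnarray*}
Taking $\mathbb{E}\sup_{s\leq t}$ and handling the stochastic integral by the BDG inequality (absorbing the resulting $\tfrac12\mathbb{E}\sup_{s\leq t}|D^\varepsilon_s|^2$ into the left-hand side) produces an inequality of the form
\begin{eqnarray*}
\mathbb{E}\big(\sup_{0\leq s\leq t}|D^\varepsilon_s|^2\big)\leq C_T\int_0^t\mathbb{E}|D^\varepsilon_s|^2\,ds+C_T\varepsilon\int_0^t\mathbb{E}\big[(1+|X^\varepsilon_s|^{2k}+|\bar X_s|^{2k})|Z^\varepsilon_s|^4\big]\,ds.
\end{eqnarray*}
Using H\"older's inequality together with Lemma \ref{PMX}, Lemma \ref{PMbarX} and \eqref{supZvare} (all applied with sufficiently high moment exponents), the double integrand is uniformly bounded by $C_T(1+|x|^{k'})$, and Gronwall's inequality then yields \eqref{F4.2}.

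The only nontrivial point, and what I expect to be the main technical obstacle, is keeping track of the polynomial weights $(1+|X^\varepsilon|^k+|\bar X|^k)$ while splitting remainders: one must ensure that the powers of $|X^\varepsilon|$ and $|Z^\varepsilon|$ produced by Taylor remainders are high enough to absorb via \eqref{X}, \eqref{barX} and \eqref{supZvare} but not so high that one loses the prefactor $\varepsilon$. This is essentially a bookkeeping step. No new Poisson-equation input is needed here, since the fast oscillations are absorbed inside $\eta^\varepsilon$ by construction.
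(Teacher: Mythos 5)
Your proposal follows essentially the same route as the paper's proof: subtract the two equations so the fast term $\varepsilon^{-1/2}(b-\bar b)$ cancels, split the drift and diffusion differences into a part linear in $Z^\varepsilon-\eta^\varepsilon$ plus second-order Taylor remainders of size $\sqrt{\varepsilon}\,(1+|\bar X|^k+|Z^\varepsilon|^k)|Z^\varepsilon|^2$ (using $\bar b\in C^2_p$ via ${\bf H}_2$ and Remark \ref{HighR}), and close with It\^o's formula, BDG, Gronwall and the moment bounds \eref{supZvare}, \eref{X}, \eref{barX}. The one point to tighten is the linear term: to obtain a Gronwall inequality in $\mathbb{E}|Z^\varepsilon_s-\eta^\varepsilon_s|^2$ you should use the uniform one-sided bound $\langle\nabla\bar b(x)\cdot z,z\rangle\le C|z|^2$ coming from \eref{barc1} together with the boundedness of $\nabla\sigma$ from \eref{ConA14} (this is exactly \eref{partialbarb} in the paper), rather than the polynomial-growth bound $C(1+|\bar X_t|^{k_1})|D^\varepsilon_t|^2$ you invoke, since the latter does not reduce to $C\int_0^t\mathbb{E}|D^\varepsilon_s|^2\,ds$ and so would not close the Gronwall step as written.
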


\begin{proof} ~\eref{barc1} and \eref{ConA14} imply that
\begin{eqnarray}
\langle x,\nabla\bar{b}(x)\cdot x\rangle\leq C|x|^2,\quad \|\nabla\sigma(x)\cdot x\|\leq C|x|^2.\label{partialbarb}
\end{eqnarray}
Put $\rho^{\vare}_t:=Z_t^\varepsilon-\eta_t^\varepsilon$. We have
\begin{eqnarray*}
\rho^{\vare}_t=\!\!\!\!\!\!\!\!&&\frac{1}{\sqrt{\varepsilon}}\int_0^t\left[\bar{b}(X^{\varepsilon}_s)-\bar{b}(\bar{X}_s)-\nabla \bar{b}(\bar{X}_s)\cdot\sqrt{\varepsilon}Z_s^\varepsilon\right]ds+\int_0^t\nabla\bar{b}(\bar{X}_s)\cdot \rho^{\vare}_s ds\\
&&+\frac{1}{\sqrt{\varepsilon}}\int_0^t\left[\sigma(X^{\varepsilon}_s)-\sigma(\bar{X}_s)-\nabla \sigma(\bar{X}_s)\cdot\sqrt{\varepsilon}Z_s^\varepsilon\right]dW_s+\int_0^t\left(\nabla\sigma(\bar{X}_s)\cdot \rho^{\vare}_s \right)dW_s.
\end{eqnarray*}
By It\^{o}'s formula, BDG inequality and Young's inequality, we get
\begin{eqnarray*}
\EE\big(\sup_{0\leq t\leq T}|\rho^{\vare}_t|^2\big)\leq\!\!\!\!\!\!\!\!&&\frac{C_T}{\varepsilon}\int_0^T\EE\left|\bar{b}(X^{\varepsilon}_s)-\bar{b}(\bar{X}_s)-\nabla \bar{b}(\bar{X}_s)\cdot\sqrt{\varepsilon}Z_s^\varepsilon\right|^2ds\\
&&+\frac{C}{\varepsilon}\EE\int_0^T\left\|\sigma(X^{\varepsilon}_s)-\sigma(\bar{X}_s)-\nabla \sigma(\bar{X}_s)\cdot\sqrt{\varepsilon}Z_s^\varepsilon\right\|^2ds\\
&&+C\EE\int_0^T|\rho^{\vare}_s|^2ds+\frac{1}{2}\EE\big(\sup_{0\leq t\leq T}|\rho^{\vare}_t|^2\big)\\
=:\!\!\!\!\!\!\!\!&&J^{\varepsilon}_1(T)+J^{\varepsilon}_2(T)+C_T\int_0^T\EE|\rho^{\vare}_s|^2 ds+\frac{1}{2}\EE\big(\sup_{0\leq t\leq T}|\rho^{\vare}_t|^2\big).
\end{eqnarray*}
Gronwall's inequality gives that
\begin{eqnarray}
\EE\big(\sup_{0\leq t\leq T}|\rho^{\vare}_t|^2\big)\leq C_T\left(J^{\varepsilon}_1(T)+J^{\varepsilon}_2(T)\right).\label{J0}
\end{eqnarray}

For $J^{\varepsilon}_1(T)$,  by Remark \ref{HighR}, ${\color{blue} {\bf H}_{2}}$ implies $\bar{b}\in C^2_p(\RR^n,\RR^n)$. Using Taylor's formula, there exists $k>0$ and $r\in (0,1)$ such that
\begin{eqnarray}
J^{\varepsilon}_1(T)
=\!\!\!\!\!\!\!\!&&\frac{C_T}{\varepsilon}\int_0^T\EE\left|\bar{b}(\bar{X}_s+\sqrt{\varepsilon}Z_s^\varepsilon)-\bar{b}(\bar{X}_s)-\partial_x \bar{b}(\bar{X}_s)\cdot\sqrt{\varepsilon}Z_s^{\varepsilon}\right|^2ds\nonumber\\
=\!\!\!\!\!\!\!\!&&\frac{C_T}{\varepsilon}\int_0^T\EE\left|\int^1_0\nabla^2 \bar{b}(\bar{X}_s+r\sqrt{\varepsilon}Z_s^\varepsilon)\cdot(\sqrt{\varepsilon}Z_s^{\varepsilon},\sqrt{\varepsilon}Z_s^{\varepsilon})dr\right|^2ds\nonumber\\
\leq\!\!\!\!\!\!\!\!&&C_T\varepsilon\int_0^T\EE\left[(1+|\bar{X}_s|^{k}+|Z_s^{\varepsilon}|^{k})|Z_s^{\varepsilon}|^4\right]ds\nonumber\\
\leq\!\!\!\!\!\!\!\!&&C_T(1+|x|^{k})\varepsilon.\label{J1}
\end{eqnarray}
Similar argument for $J^{\varepsilon}_2(T)$, it follows
\begin{eqnarray}
J^{\varepsilon}_2(T)\leq C_T(1+|x|^{k})\varepsilon. \label{J2}
\end{eqnarray}
Combining \eref{J0}-\eref{J2}, we have
$$
\EE\big(\sup_{0\leq t\leq T}|\rho^{\vare}_t|^2\big)\leq C_T(1+|x|^{k})\varepsilon.
$$

The proof is complete. %\hspace{\fill}$\Box$
\end{proof}

\subsubsection{Tightness of $\eta^\varepsilon$}\label{S4.2}

In this subsection, we study the tightness of the solution $\eta^\varepsilon$ of Eq. (\ref{e6}) in $C([0,T],\RR^n)$ as $\varepsilon\to0$.

\vspace{0.2cm}
For $0\leq t\leq T$, put
$$\eta^\varepsilon(t)=:I_1^\varepsilon(t)+I_2^\varepsilon(t)+I_3^\varepsilon(t)$$
with
$$ I_1^\varepsilon(t)=\frac{1}{\sqrt{\varepsilon}}\!\int_{0}^t\!\left(b(X^{\varepsilon}_s,\alpha^{\varepsilon}_s)\!-\!\bar{b}(X^{\varepsilon}_s)\right)ds,~I_2^\varepsilon(t)=\!\int_{0}^t\!\nabla\bar{b}(\bar{X}_s)\cdot \eta_s^\varepsilon ds,~I_3^\varepsilon(t)=\!\int_{0}^t\!\nabla\sigma(\bar{X}_s)\cdot\eta_s^\varepsilon dW_s.$$

\begin{proposition}\label{pro3.2}
Suppose that $\sigma(x,i)=\sigma(x)$ and \ref{A1}-\ref{A3} and ${\color{blue} {\bf H}_{2}}$ hold. Then for $T>0$, $\eta^\varepsilon$ is tight in $C([0,T];\RR^{n})$ as $\varepsilon\to0$.
\end{proposition}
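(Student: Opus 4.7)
The plan is to verify Aldous's tightness criterion for $\{\eta^\varepsilon\}_{\varepsilon\in(0,1]}$. Since $\eta^\varepsilon$ is continuous in $t$, tightness in $C([0,T];\RR^n)$ reduces to: (i) tightness of the marginals $\{\eta^\varepsilon_t\}$ at each $t\in[0,T]$, and (ii) for every $\theta,\eta_0>0$, the existence of $\delta_0,\varepsilon_0>0$ such that
$$\PP\bigl(|\eta^\varepsilon_{\tau+\delta}-\eta^\varepsilon_\tau|>\theta\bigr)\le \eta_0$$
for every $\mathcal{F}_t$-stopping time $\tau\le T-\delta$, every $\delta\in(0,\delta_0]$ and every $\varepsilon\in(0,\varepsilon_0]$. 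The preparatory bound $\sup_{\varepsilon\in(0,1]}\EE(\sup_{t\le T}|\eta^\varepsilon_t|^2)\le C_T(1+|x|^k)$ is immediate from \eref{supZvare} and Proposition \ref{pro5.1} via $\eta^\varepsilon=Z^\varepsilon-(Z^\varepsilon-\eta^\varepsilon)$; in particular (i) holds.

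For the two ``slow'' pieces, the regularity of $\bar b$ (from ${\color{blue}{\bf H}_2}$ combined with Remark \ref{HighR}) and of $\sigma$, together with the moment bound above, yields by Hölder and the BDG inequality respectively
$$\EE|I_2^\varepsilon(\tau+\delta)-I_2^\varepsilon(\tau)|^2\le C\delta^2, \qquad \EE|I_3^\varepsilon(\tau+\delta)-I_3^\varepsilon(\tau)|^2\le C\delta,$$
uniformly in $\varepsilon$ and $\tau$, so Chebyshev delivers the Aldous bound for $I_2^\varepsilon$ and $I_3^\varepsilon$.

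The main obstacle is $I_1^\varepsilon$, because of its $\varepsilon^{-1/2}$ prefactor. To neutralise it, consider the Poisson equation $-Q(x)\Phi(x,\cdot)(i)=F(x,i)$ with $F(x,i):=b(x,i)-\bar b(x)$, which satisfies the central condition \eref{CenCon}. Under ${\color{blue}{\bf H}_2}$ both $F$ and $Q$ lie in $C^2_p$, so Theorem \ref{Poisson} supplies a solution $\Phi\in C^2_p$ in $x$, which is sufficient regularity for Itô's formula to apply directly (no mollification is needed here, unlike in the proof of Theorem \ref{main result 1}). Arguing as in the derivation of \eref{F4.11} and using $-Q\Phi=F$ produces the decomposition
$$I_1^\varepsilon(t)=\sqrt\varepsilon\bigl[\Phi(x,\alpha)-\Phi(X^\varepsilon_t,\alpha^\varepsilon_t)\bigr]+\sqrt\varepsilon\int_0^t\mathcal{L}_1\Phi(X^\varepsilon_s,\alpha^\varepsilon_s)\,ds+\sqrt\varepsilon\, M^{W,\varepsilon}_t+\sqrt\varepsilon\, M^{N,\varepsilon}_t,$$
where $M^{W,\varepsilon}$ is the Brownian martingale built from $\partial_x\Phi\cdot\sigma$ and $M^{N,\varepsilon}$ is the compensated jump martingale driven by $\alpha^\varepsilon$. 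The first three terms contribute $O(\varepsilon)+O(\varepsilon\delta^2)+O(\varepsilon\delta)$ to the $L^2$-increment by straightforward moment estimates. The crux lies in the jump martingale: since the compensator rate for $\alpha^\varepsilon$ scales as $\varepsilon^{-1}$, a Kunita-type estimate analogous to \eref{F4.24} gives
$$\EE|\sqrt\varepsilon(M^{N,\varepsilon}_{\tau+\delta}-M^{N,\varepsilon}_\tau)|^2\le \varepsilon\cdot\frac{C}{\varepsilon}\,\delta=C\delta,$$
uniformly in $\varepsilon$ and $\tau$. Summing the four contributions yields $\EE|I_1^\varepsilon(\tau+\delta)-I_1^\varepsilon(\tau)|^2\le C(\varepsilon+\delta)$, and Chebyshev gives $\PP(|I_1^\varepsilon(\tau+\delta)-I_1^\varepsilon(\tau)|>\theta)\le C(\varepsilon+\delta)/\theta^2$, which is made $\le\eta_0$ by first choosing $\varepsilon_0$ and then $\delta_0$ small. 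This verifies (ii) and completes the proof.
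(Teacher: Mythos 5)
Your overall strategy coincides with the paper's: reduce to a Skorokhod/Aldous-type criterion (D-tightness plus path continuity gives C-tightness), treat $I_2^\varepsilon$ and $I_3^\varepsilon$ by elementary moment bounds, and neutralise the $\varepsilon^{-1/2}$ in $I_1^\varepsilon$ by writing $b-\bar b=-Q\Phi$ with $\Phi\in C^2_p$ from Theorem \ref{Poisson} (indeed no mollification is needed here), applying It\^o's formula, and observing that the $\varepsilon$ from the prefactor cancels the $\varepsilon^{-1}$ in the compensator of the switching process, so the jump-martingale increment is $O(\delta)$ uniformly in $\varepsilon$. That computation, which is the crux, is correct and is exactly the paper's estimate \eref{S5.7} combined with the increment bound in its Step 2.

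The one genuine gap is your preparatory moment bound. Obtaining $\sup_{\varepsilon}\EE\big(\sup_{t\le T}|\eta^\varepsilon_t|^2\big)\le C_T(1+|x|^k)$ from \eref{supZvare} and Proposition \ref{pro5.1} via $\eta^\varepsilon=Z^\varepsilon-(Z^\varepsilon-\eta^\varepsilon)$ is legitimate, but it only delivers \emph{second} moments, because Proposition \ref{pro5.1} is stated (and proved) only in $L^2$. Second moments are not enough for your estimate of $I_2^\varepsilon$: under ${\color{blue}{\bf H}_2}$ one only has $\bar b\in C^2_p$, so $\nabla\bar b$ has polynomial growth, and
$$\EE\Big|\int_\tau^{\tau+\delta}\nabla\bar b(\bar X_s)\cdot\eta^\varepsilon_s\,ds\Big|^2\le \delta\,\EE\int_\tau^{\tau+\delta}\big(1+|\bar X_s|^{2k}\big)|\eta^\varepsilon_s|^2\,ds$$
requires, after Cauchy--Schwarz against the moments of $\bar X$, a bound on $\EE\big(\sup_{t\le T}|\eta^\varepsilon_t|^4\big)$ that you have not established. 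The paper closes exactly this point by proving $\EE\big(\sup_{t\le T}|\eta^\varepsilon_t|^p\big)\le C_{p,T}(1+|x|^{k_p})$ for all $p$ directly (its estimate \eref{5}), via It\^o's formula applied to $|\eta^\varepsilon_t|^p$, using the uniform $p$-th moment bound \eref{6} on $I_1^\varepsilon$ together with the one-sided bound $\langle z,\nabla\bar b(x)\cdot z\rangle\le C|z|^2$ from \eref{partialbarb}; note that this one-sided bound is what makes the $p$-th moment Gronwall argument work even though $\nabla\bar b$ is unbounded, but it cannot be used to control $|I_2^\varepsilon(\tau+\delta)-I_2^\varepsilon(\tau)|$ directly. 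So you should either prove an $L^p$ version of Proposition \ref{pro5.1} or, more simply, insert the paper's direct $p$-th moment estimate for $\eta^\varepsilon$; with that supplement your argument is complete.
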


\begin{proof}
 For technical reasons, we can not use the criterion \eref{T1} and \eref{T2} of tightness in $C([0,T];\RR^{n})$ directly. We just have to prove the tightness of $\{\eta^{\varepsilon}\}_{\varepsilon\le 1}$ in $\mathbb{D}([0,T];\RR^{n})$ firstly, where $\mathbb{D}([0,T];\RR^{n})$ is the set of $c\grave{a}dl\grave{a}g$ $\RR^n$-valued functions on $[0,T]$. Path continulity  of $\eta^{\varepsilon}$ shows that $\{\eta^{\varepsilon}\}_{\varepsilon<1}$ is also tight in  $C([0,T];\RR^{n})$.

By the tight criterion in $\mathbb{D}([0,T];\RR^{n})$ (cf.~\cite[Chapter VI, Theorem 4.5]{JS}), we only prove that $\{\eta^\varepsilon\}_{\varepsilon<1}$ satisfies the followings.

(i) For $\delta>0$, there exist $\varepsilon_0>0$ and $\lambda>0$ such that
\begin{equation}\label{t1}
\sup_{0<\varepsilon\leq \varepsilon_0}\mathbb{P}\big(\sup_{0\leq t\leq T}|\eta^\varepsilon_t|\geq \lambda\big)\leq \delta.
\end{equation}

(ii) For $\delta>0$ and $K>0$, there exist $\theta>0$ and $\varepsilon_0>0$ such that
\begin{equation}\label{t2}
\sup_{0<\varepsilon\leq \varepsilon_0}\sup_{S_1,S_2\in \mathcal{T}^{T} , S_1\leq S_2\leq S_1+\theta}\mathbb{P}\big(|\eta^\varepsilon_{S_1}-\eta^\varepsilon_{S_2}|\geq K\big)\leq \delta,
\end{equation}
where $ \mathcal{T}^{T}=\{S:S{\rm \text ~is}~\mathcal{F}_t-{\rm \text stopping~time~with}~S\le T\}$.

%\vspace{2mm}
\textbf{Step 1:}  We check \eref{t1}. Since $F=b(x,i)-\bar{b}(x)$ satisfies the "\emph{central condition}" \eref{CenCon},  ${\color{blue} {\bf H}_{2}}$ implies that $F\in C^2_p(\RR^n\times\mathbb{S},\RR^n)$. Th.\ref{Poisson} yields that \eref{PEQ} admits a solution $\Phi(x,i)$. Moreover, there exist constants $C>0,k>0$ such that, for $x\in \RR^n$,
\begin{eqnarray}
\|\partial_{x}\Phi(x,\cdot)\|_{\infty}+\|\partial^2_{x}\Phi(x,\cdot)\|_{\infty}\le C(1+|x|^k).\label{E1}
\end{eqnarray}
By It\^{o}'s formula, we obtain
\begin{eqnarray*}
\Phi(X_t^{\varepsilon},\alpha_t^{\varepsilon})=\!\!\!\!\!\!\!\!\!&&\Phi(x,\alpha)+\int^t_0 \mathcal{L}_{1}\Phi(\cdot,\alpha^{\varepsilon}_{s})(\alpha^{\vare}_{s},X_{s}^{\varepsilon})ds\\
&&+\frac{1}{\varepsilon}\int_0^tQ(X_{s}^{\varepsilon})\Phi(X_{s}^{\varepsilon},\cdot)(\alpha_s^{\varepsilon})ds+M^{1,\varepsilon}_t+M^{2,\varepsilon}_t,
\end{eqnarray*}
where $\mathcal{L}_{1}\Phi(\cdot,\alpha)(\alpha,x):=(\mathcal{L}_{1}\Phi_1(\cdot,\alpha)(\alpha,x),\ldots, \mathcal{L}_{1}\Phi_n(\cdot,\alpha)(\alpha,x))$
with
\begin{eqnarray*}\label{inf2}
 \mathcal{L}_{1}\Phi_k(x,\alpha):=\left\langle b(x,\alpha), \partial_x \Phi_k(x,\alpha)\right \rangle+\frac{1}{2}\text{Tr}\left[\partial^2_{x} \Phi_k(x,\alpha)\sigma\sigma^{*}(x)\right ],\quad k=1,\ldots, n
\end{eqnarray*}
and $M^{1,\varepsilon}_t=\int_0^t\!\int_{[0,\infty)}[ \Phi(X_{s-}^{\varepsilon},\alpha^{\varepsilon}_{s-}+g^{\varepsilon}(X_{s-}^{\varepsilon},\alpha_{s-}^{\varepsilon},z))-\Phi(X_{s-}^{\varepsilon}\!,\alpha_{s-}^{\varepsilon})]\tilde{N}(ds,dz).$ and $M^{2,\varepsilon}_t=\int_0^t \left[\partial_x\Phi(X_{s}^{\varepsilon},\alpha_s^{\varepsilon})\cdot \sigma(X_{s}^{\varepsilon})\right]dW_s$ are local martingales.
Then it follows that
\begin{eqnarray*}\label{4}
I_1^\varepsilon(t)=\!\!\!\!\!\!\!\!&&\sqrt{\varepsilon}\Big\{-\Phi(X_{t}^{\varepsilon},\alpha^{\varepsilon}_{t})+\Phi(x,\alpha)+\int^t_0 \mathcal{L}_{1}\Phi(\cdot,\alpha^{\varepsilon}_{s})(\alpha^{\varepsilon}_{s},X_{s}^{\varepsilon})ds+M^{2,\varepsilon}_t\Big\}+\sqrt{\varepsilon}M^{1,\varepsilon}_t\nonumber\\
=:\!\!\!\!\!\!\!\!&&I_{11}^\varepsilon(t)+I_{12}^\varepsilon(t).
\end{eqnarray*}

For any $p>0$, BDG inequality, (\ref{E1}) and (\ref{X}) imply that there exists $k_p>0$ such that
\begin{eqnarray}
\EE\big(\sup_{t\in[0,T]}|I_{11}^\varepsilon(t)|^p\big)\leq C_T\varepsilon^{p/2}\Big[1+\EE\big(\sup_{t\in[0,T]}|X^{\varepsilon}_t|^{k_p}\big)\Big]\le C_T\varepsilon(1+|x|^{k_p}).\label{Vanish}
\end{eqnarray}
Using Kunita's first inequality (see \cite[Theorem 4.4.23]{A}), \eref{E1} and condition \eref{Finte K}, we have, for  $p\geq 2$,
\begin{eqnarray}
&&\mathbb{E}\big(\sup_{0\leq t\leq T}\left|I_{12}^\varepsilon(t)\right|^p\big)\nonumber\\
\le\!\!\!\!\!\!\!\!&&C_p\varepsilon^{p/2}\mathbb{E}\Big[\int^T_0\int_{[0,\infty)}\left|\Phi(X^{\varepsilon}_{s},\alpha^{\varepsilon}_{s}+g^{\varepsilon}(X^{\varepsilon}_{s},\alpha^{\varepsilon}_{s},z))-\Phi(X^{\varepsilon}_{s},\alpha^{\varepsilon}_{s})\right|^2 dzds\Big]^{p/2}\nonumber\\
&&+C_p\varepsilon^{p/2}\mathbb{E}\int^T_0\int_{[0,\infty)}\left|\Phi(X^{\varepsilon}_{s},\alpha^{\varepsilon}_{s}+g^{\varepsilon}(X^{\varepsilon}_{s},\alpha^{\varepsilon}_{s},z))-\Phi(X^{\varepsilon}_{s},\alpha^{\varepsilon}_{s})\right|^p dzds\nonumber\\
\le\!\!\!\!\!\!\!\!&&C_p\varepsilon^{p/2}\mathbb{E}\Big[\int^T_0\!\!\left(1\!+\!|X^{\varepsilon}_{s}|^{k_p}\right)\int_{[0,K(X^{\varepsilon}_{s})\varepsilon^{-1}]} \!\!\!dzds\Big]^{p/2}\!\!\!+\!C_p\varepsilon^{p/2}\mathbb{E}\int^T_0\!\!\left(1+|X^{\varepsilon}_{s}|^{k_p}\right)\!\!\int_{[0,K(X^{\varepsilon}_{s})\varepsilon^{-1}]}\!\! \!dzds\nonumber\\
\le\!\!\!\!\!\!\!\!&&C_p\mathbb{E}\Big[\int^T_0\left(1+|X^{\varepsilon}_{s}|^{k_p})(1+|X^{\varepsilon}_{s}|^{k_p}\right)ds\Big]^{p/2}\!\!\!+C_p\mathbb{E}\int^T_0\left(1+|X^{\varepsilon}_{s}|^{k_p}\right)\left(1+|X^{\varepsilon}_{s}|^{k_p}\right)ds\nonumber\\
\le\!\!\!\!\!\!\!\!&&C_{p,T}\left(1+|x|^{k_p}\right).\label{S5.7}
\end{eqnarray}

By \eref{Vanish} and \eref{S5.7}, we get
\begin{eqnarray}\label{6}
\sup_{\varepsilon\in(0,1)}\EE\big(\sup_{t\in[0,T]}|I_1^\varepsilon(t)|^p\big)\leq C_T\left(1+|x|^{k_p}\right).
\end{eqnarray}

It\^{o}'s formula implies that
\begin{eqnarray*}
|\eta^\varepsilon_t|^p=\!\!\!\!\!\!&&\frac{p}{\sqrt{\varepsilon}}\int_0^t|\eta^\varepsilon_s|^{p-2}\left\langle \eta^\varepsilon_s,b(X^{\varepsilon}_s,\alpha^{\varepsilon}_s)-\bar{b}(\bar{X}_s)\right\rangle ds+p\int_0^t|\eta^\varepsilon_s|^{p-2}\left\langle \eta^\varepsilon_s,\nabla\bar{b}(\bar{X}_s)\cdot\eta_s^\varepsilon\right\rangle ds\\
&&+p\int_0^t|\eta^\varepsilon_s|^{p-2}\left\langle \eta^\varepsilon_s,\nabla\sigma(\bar{X}_s)\cdot\eta_s^\varepsilon dW_s\right\rangle+{\frac{p}{2}}\int_0^t|\eta^\varepsilon_s|^{p-2} \|\nabla\sigma(\bar X_s)\cdot \eta^\varepsilon_s\|^2ds\\
&&+{\frac{p(p-2)}{2}}\int_0^t|\eta^\varepsilon_s|^{p-4}\big|(\nabla\sigma(\bar X_s)\cdot \eta^\varepsilon_s)^{*}\cdot\eta^\varepsilon_s \big|^{2}ds.
\end{eqnarray*}
Then by \eref{partialbarb}, \eref{6}, BDG inequality and Young's inequality, we get
\begin{eqnarray*}
\EE\big(\sup_{0\leq t\leq T}|\eta^\varepsilon_t|^p\big)\leq\!\!\!\!\!\!\!\!&&C_p\EE\big(\sup_{0\leq t\leq T}|I_1^\varepsilon(t)|^{k_p}\big)+C_{p}\EE\int_{0}^T|\eta_s^\varepsilon|^p ds
+\frac{1}{2}\EE\big(\sup_{0\leq t\leq T}|\eta^\varepsilon_t|^p\big)\\
\leq\!\!\!\!\!\!\!\!&&C_{p,T}\left(1+|x|^{k_p}\right)+C_{p}\EE\int_{0}^T|\eta_s^\varepsilon|^p ds+\frac{1}{2}\EE\big(\sup_{0\leq t\leq T}|\eta^\varepsilon_t|^p\big).
\end{eqnarray*}
This implies that
\begin{eqnarray}\label{5}
\EE\big(\sup_{0\leq t\leq T}|\eta^\varepsilon_t|^p\big)\leq C_{p,T}\left(1+|x|^{k_p}\right).
\end{eqnarray}
Hence, it is easy to get that $\{\eta^\varepsilon\}$ satisfies condition \eref{t1} by Chebyshev's inequality.

%\vspace{0.2cm}
\textbf{Step 2:} We check \eref{t2}. Because it's easy to deduce that $I^\varepsilon_{11}$ satisfies \eref{t2} from \eref{Vanish}, we just have to prove that $J^{\varepsilon}_t:=I^\varepsilon_{12}(t)+I^\varepsilon_{2}(t)+I^\varepsilon_{3}(t)$ satisfies condition \eref{t2}.

In fact, for any $S_1,S_2\in \mathcal{T}^{T}$ with $S_1\leq S_2\leq S_1+\theta$, $\bar{b}\in C^{1}_{p}(\RR^n,\RR^n)$, \eref{E1} and \eref{5} yield
\begin{eqnarray*}
\EE|J^{\varepsilon}_{S_1}-J^{\varepsilon}_{S_2}|^2
\leq\!\!\!\!\!\!\!\!&&C\varepsilon\mathbb{E}\Big|\int^{S_2}_{S_1}\int_{[0,\infty)}\Phi(X^{\varepsilon}_{s},\alpha^{\varepsilon}_{s}+g^{\varepsilon}(X^{\varepsilon}_{s},\alpha^{\varepsilon}_{s},z))-\Phi(X^{\varepsilon}_{s},\alpha^{\varepsilon}_{s}) \tilde{N}(ds,dz)\Big|^2\nonumber\\
&&+C\mathbb{E}\Big|\int^{S_2}_{S_1}\nabla\bar{b}(\bar{X}_s)\cdot \eta_s^\varepsilon ds\Big|^2+C\mathbb{E}\Big|\int_{S_1}^{S_2}\nabla\sigma(\bar{X}_s)\cdot\eta_s^\varepsilon dW_s\Big|^2\\
\leq\!\!\!\!\!\!\!\!&&C\varepsilon\EE\int^{S_2}_{S_1}\int_{[0,\infty)}|\Phi(X^{\varepsilon}_{s},\alpha^{\varepsilon}_{s}+g^{\varepsilon}(X^{\varepsilon}_{s},\alpha^{\varepsilon}_{s},z))-\Phi(X^{\varepsilon}_{s},\alpha^{\varepsilon}_{s})|^2 dzds\nonumber\\
&&+C\theta\mathbb{E}\int^{S_2}_{S_1}|\nabla\bar{b}(\bar{X}_s)\cdot \eta_s^\varepsilon|^2 ds+C\mathbb{E}\int_{S_1}^{S_2}\|\nabla\sigma(\bar{X}_s)\cdot\eta_s^\varepsilon\|^2 ds\\
\leq\!\!\!\!\!\!\!\!&&C\mathbb{E}\int^{S_2}_{S_1}(1+|X^{\varepsilon}_{s}|^k)ds+C\theta\mathbb{E}\int^{S_2}_{S_1}(1+|\bar{X}_s|^k)| \eta_s^\varepsilon|^2 ds+C\mathbb{E}\int_{S_1}^{S_2}|\eta_s^\varepsilon|^2 ds\nonumber\\
\leq\!\!\!\!\!\!\!\!&&C\theta\EE\big(\sup_{s\in [0,T]}(1\!+\!|X^{\varepsilon}_{s}|^k)\big)\!+\!C(\theta^2\!+\!\theta)\!\Big[\EE\big(\sup_{0\leq s\leq T}|\bar{X}_s|^{2k}\big)\Big]^{1/2}\!\Big[\EE\big(\sup_{0\leq s\leq T}|\eta_s^\varepsilon|^4\big)\Big]^{1/2}\\
\leq\!\!\!\!\!\!\!\!&&C_T(\theta^2+\theta)\left(1+|x|^{k}\right).
\end{eqnarray*}
Chebyshev's inequality implies that $\{J^{\varepsilon}\}$ satisfies \eref{t2}. The proof is complete.
\end{proof}

Now we prove our first result in the central limit theorem.

\vspace{-3mm}
~~\\
\textbf{Proof of Theorem \ref{main result 4}:}
Note that \eref{main 1} gives that $X^{\varepsilon}\to\bar{X}$ (as $\varepsilon\to0$) in the space $C([0,T];\RR^{n})$  $\PP$-a.s., then by Proposition \ref{pro3.2}, the family $\{(\bar{X},X^{\varepsilon},\eta^{\varepsilon})\}_{0<\varepsilon\leq 1}$ is tight $C([0,T];\RR^{3n})$ for any $T>0$ (see \cite[Chapter VI, Corollary 3.33]{JS}). Hence, we just have to show that for any sequence $\varepsilon_k\to0$, there is the subsequence (for the sake of writing, we still write as $\varepsilon_{k}$) such that $\{(\bar{X},X^{\varepsilon_{k}},\eta^{\varepsilon_{k}})\}_{k\ge 1}$ weakly converges to the unique solution of equations $(\bar{X}, \bar{X}, Z)$ of \eref{AR1} and \eref{limE}  in $C([0,T];\RR^{3n})$ as $k\to\infty$. To to this, we will use the martingale problem approach to determine the limiting process $(\bar{X}, Z)$. The detailed proof is divided into three steps.

%\vspace{0.1cm}
\textbf{Step 1:} Let $\Psi_{t_0}(\cdot)$ be a $\sigma\{\varphi_t, \varphi\in C([0,T],\RR^{2n}), t\leq t_0\}$-measurable bounded continuous function on $C([0,T],\RR^{2n})$. It is sufficient to prove that, for $0\leq t_0 \leq T$, $\Psi_{t_0}(\cdot)$ and $U\in C^4_b(\RR^{2n})$, the following assertion holds:
\begin{eqnarray}
\EE\Big[\Big(U(\bar X_t, Z_t)-U(\bar X_{t_0}, Z_{t_0})-\int^t_{t_0}\mathcal{L}U(\bar X_s,Z_s)ds\Big)\Psi_{t_0}(\bar X,Z)\Big]=0,\quad t_0\leq t\leq T,\label{Mar11}
\end{eqnarray}
where $\mathcal{L}$ is the generator of Eq. \eref{AR1} and Eq. \eref{limE}, i.e.,
\begin{eqnarray}
\mathcal{L}U(x,z):=\sum^4_{i=1}\mathcal{L}_iU(x,z),\quad x,z\in\RR^n,\label{Generator}
\end{eqnarray}
with
$$\mathcal{L}_1U(x,z):=\sum^n_{i=1}\partial_{x_i}U(x,z)\bar{b}_i(x)+\frac{1}{2}\sum^{n}_{i=1}\sum^n_{j=1}\partial_{x_i}\partial_{x_j} U(x,z)(\sigma\sigma^{\ast})_{ij}(x),$$
$$\mathcal{L}_2U(x,z)\!:=\!\sum^n_{i=1}\!\partial_{z_i}U(x,z)(\nabla\bar{b}_i(x)\cdot z)\!+\!\frac12\!\sum^{n}_{i=1}\sum^n_{j=1}\!\partial_{z_i}\partial_{z_j} U(x,z)\left[(\nabla\sigma\cdot z)(\nabla\sigma\cdot z)^{\ast}\right]_{ij}(x),$$
$$\mathcal{L}_3U(x,z):=\sum^{n}_{i=1}\sum^n_{j=1}\partial_{x_i}\partial_{z_j} U(x,z)\left[\sigma(\nabla\sigma\cdot z)^{\ast}\right]_{ij}(x),$$
$$\mathcal{L}_4U(x,z):=\frac{1}{2}\sum^{n}_{i=1}\sum^n_{j=1}\partial_{z_i}\partial_{z_j} U(x,z)(\Theta\Theta^{\ast})_{ij}(x).$$

For the solution $\Phi$ of Poisson equation (\ref{PEQ}), Th.\ref{Poisson} and ${\color{blue} {\bf H}_{2}}$ imply $\Phi\in C^2_p(\RR^n\times\mathbb{S},\RR^n)$. It\^{o}'s formula gives
\begin{eqnarray*}
d\Phi(X_{t}^{\varepsilon_k},\alpha_{t}^{\varepsilon_k})=\!\!\!\!\!\!\!\!\!&&\partial_x\Phi(X_{t}^{\varepsilon_k},\alpha_{t}^{\varepsilon_k})\cdot b(X_{t}^{\varepsilon_k},\alpha_{t}^{\varepsilon_k})dt+\partial_x\Phi(X_{t}^{\varepsilon_k},\alpha_{t}^{\varepsilon_k})\cdot \sigma(X_{t}^{\varepsilon_k})dW_t\\
&&+\frac{1}{\varepsilon_k}Q(X_{t}^{\varepsilon_k})\Phi(X_{t}^{\varepsilon_k},\cdot)( \alpha_t^{\varepsilon_k})dt+\frac{1}{2}\text{Tr}[\partial_x^{2}\Phi(X_{t}^{\varepsilon_k},\alpha_{t}^{\varepsilon_k})\cdot \left(\sigma\sigma^{*}\right)(X_{t}^{\varepsilon_k})]dt\\
&&+\int_{[0,\infty)}\left( \Phi(X_{t}^{\varepsilon_k},\alpha_{t-}^{\varepsilon_k}+\!g^{\varepsilon}(X_{t}^{\varepsilon_k},\alpha_{t-}^{\varepsilon_k},z))-\Phi(X_{t}^{\varepsilon_k},\alpha_{t-}^{\varepsilon_k})\right)\tilde{N}(dt,dz)
\end{eqnarray*}
and
\begin{eqnarray*}
d\partial_z U(\bar{X}_{t},\eta^{\varepsilon_k}_t)=\!\!\!\!\!\!\!\!\!&&(\mathcal{L}_1+\mathcal{L}_2+\mathcal{L}_3) (\partial_z U)(\bar X_t, \eta^{\varepsilon_k}_t)dt\\
&&+\partial^2_z U(\bar X_t, \eta^{\varepsilon_k}_t)\cdot\frac{1}{\sqrt{\varepsilon_k}}\left[b(X^{\vare_k}_t, \alpha^{\varepsilon_k}_t)-\bar{b}(X^{\vare_k}_t)\right] dt\\
&&+ \partial_x \partial_z U(\bar X_t, \eta^{\varepsilon_k}_t)\cdot \sigma(\bar X_t)d W_t+\partial^2_z U(\bar X_t, \eta^{\varepsilon_k}_t)\cdot\left(\nabla\sigma(\bar{X}_t)\cdot\eta_t^{\varepsilon_k}\right)d W_t.
\end{eqnarray*}
Then using integration by parts formula, for any $t\geq t_0$,
\begin{eqnarray*}
&&\big\langle\Phi(X_{t}^{\varepsilon_k},\alpha_{t}^{\varepsilon_k}), \partial_z U(\bar{X}_{t},\eta^{\varepsilon_k}_t)\big\rangle\\
=\!\!\!\!\!\!\!\!\!&&\left\langle\Phi(X_{t_0}^{\varepsilon_k},\alpha^{\varepsilon_k}_{t_0}), \partial_z U(\bar{X}_{t_0},\eta^{\varepsilon_k}_{t_0})\right\rangle+\int^t_{t_0}\left\langle\Phi(X_{s}^{\varepsilon_k},\alpha_{s}^{\varepsilon_k}), d\partial_z U(\bar{X}_{s},\eta^{\varepsilon_k}_s)\right\rangle\\
&&+\int^t_{t_0}\left\langle\partial_z U(\bar{X}_{s},\eta^{\varepsilon_k}_s), d\Phi(X_{s}^{\varepsilon_k},\alpha_{s}^{\varepsilon_k})\right\rangle\!+\!\int^t_{t_0} d[\Phi(X^{\varepsilon_k},\alpha^{\varepsilon_k}), \partial_z U(\bar{X},\eta^{\varepsilon_k})]_s\\
=\!\!\!\!\!\!\!\!\!&&\left\langle\!\Phi(X_{t_0}^{\varepsilon_k}\!,\alpha^{\varepsilon_k}_{t_0}), \partial_z U(\bar{X}_{t_0},\eta^{\varepsilon_k}_{t_0})\!\right\rangle
\!+\!\frac{1}{\sqrt{\varepsilon_k}}\!\int^t_{t_0}\!\!\left\langle\!\Phi(X_{s}^{\varepsilon_k}\!,\alpha_{s}^{\varepsilon_k}),\partial^2_z U(\bar X_s, \eta^{\varepsilon_k}_s)\!\cdot\!\left[b(X^{\vare_k}_s, \alpha^{\varepsilon_k}_s)-\bar{b}( X^{\vare_k}_s)\right] \right\rangle ds\\
&&+\frac{1}{\varepsilon_k}\int^{t}_{t_0}\left\langle Q(X_{s}^{\varepsilon_k})\Phi(X_{s}^{\varepsilon_k},\cdot)(\alpha^{\varepsilon_k}_{s}), \partial_z U(\bar X_s,\eta_{s}^{\varepsilon_k})\right\rangle ds+R^{\vare_k}_{\partial_zU,\Phi}(t,t_0),
\end{eqnarray*}
where
\begin{eqnarray}
&&R^{\vare_k}_{\partial_zU,\Phi}(t,t_0)   \nonumber\\
:=\!\!\!\!\!\!\!\!&&\int^t_{t_0}\left\langle\Phi(X_{s}^{\varepsilon_k},\alpha_{s}^{\varepsilon_k}),[(\mathcal{L}_1+\mathcal{L}_2+\mathcal{L}_3) (\partial_z U)(\bar X_s, \eta^{\varepsilon_k}_s)ds\right.\nonumber\\
&&~~~+ \left.\partial_x \partial_z U(\bar X_s, \eta^{\varepsilon_k}_s)\cdot \sigma(\bar X_s)d W_s+\partial^2_z U(\bar X_s, \eta^{\varepsilon_k}_s)\cdot\left(\partial_x\sigma(\bar{X}_s)\cdot\eta_s^{\varepsilon_k}\right) d W_s]\right\rangle \nonumber\\
&&+\int^t_{t_0}\big\langle\partial_z U(\bar X_{s},\eta^{\varepsilon_k}_{s}),\big[\partial_x\Phi(X_{s}^{\varepsilon_k},\alpha_{s}^{\varepsilon_k})\cdot b(X_{s}^{\varepsilon_k},\alpha_{s}^{\varepsilon_k})ds+\partial_x\Phi(X_{s}^{\varepsilon_k},\alpha_{s}^{\varepsilon_k})\cdot \sigma(X_{s}^{\varepsilon_k})dW_s\nonumber\\
&&\quad\quad\quad\quad+\frac12
%\int_{t_0}^t
\text{Tr}\left(\partial_x^{2}\Phi(X_{s}^{\varepsilon_k},\alpha_{s}^{\varepsilon_k})\cdot \left(\sigma\sigma^{*}\right)(X_{s}^{\varepsilon_k})\right)ds\big]\big\rangle \nonumber\\
&&+ \int^t_{t_0}\!\text{Tr}\! \left[\left(\partial_x\Phi(X_{s}^{\varepsilon_k},\alpha_{s}^{\varepsilon_k})
 \sigma(X_{s}^{\varepsilon_k})\right)^{\ast}
 \left(\partial_x \partial_z U(\bar X_s, \eta^{\varepsilon_k}_s)
\sigma(\bar X^{k}_s)\!+\!\partial^2_z U(\bar X_s, \eta^{\varepsilon_k}_s)
(\nabla\sigma(\bar{X}_s)\cdot\eta_s^{\varepsilon_k})\right)\right]ds\nonumber\\
&&+\int^t_{t_0}\!\int_{[0,\infty)}\!\!\left\langle\partial_z U(\bar X_{s},\eta^{\varepsilon_k}_{s}),\left( \Phi(X_{s}^{\varepsilon_k},\alpha_{s-}^{\varepsilon_k}\!+\!g^{\varepsilon}(X_{s}^{\varepsilon_k},\alpha_{s-}^{\varepsilon_k},z))
\!-\!\Phi(X_{s}^{\varepsilon_k},\alpha_{s-}^{\varepsilon_k})\right)\right\rangle\tilde{N}(ds,dz).\label{DRPhi}
\end{eqnarray}

Poisson equation \eref{PEQ} shows that
\begin{eqnarray}
&&\frac{1}{\sqrt{\varepsilon_k}}\int^t_{t_0}\left \langle\partial_z U(\bar X_s,\eta_{s}^{\varepsilon_k}),b(X^{\vare_k}_s, \alpha^{\varepsilon_k}_s)-\bar{b}(X^{\vare_k}_s)\right\rangle ds\nonumber\\
=\!\!\!\!\!\!\!\!&&-\frac{1}{\sqrt{\varepsilon_k}}\int^t_{t_0} \left\langle \partial_z U(\bar X_s,\eta_{s}^{\varepsilon_k}),Q(X_{s}^{\varepsilon_k})\Phi(X_{s}^{\varepsilon_k},\cdot)(\alpha^{\varepsilon_k}_{s})\right\rangle ds\nonumber\\
=\!\!\!\!\!\!\!\!&&\sqrt{\vare_k}\left[R^{\vare_k}_{\partial_zU,\Phi}(t,t_0)+\langle\Phi(X_{t_0}^{\varepsilon_k},\alpha^{\varepsilon_k}_{t_0}), \partial_z U(\bar{X}_{t_0},\eta^{\varepsilon_k}_{t_0})\rangle-\langle\Phi(X_{t}^{\varepsilon_k},\alpha_{t}^{\varepsilon_k}), \partial_z U(\bar{X}_{t},\eta^{\varepsilon_k}_t)\rangle\right]\nonumber\\
&&+\int^t_{t_0}\left\langle\Phi(X_{s}^{\varepsilon_k},\alpha_{s}^{\varepsilon_k}),\partial^2_z U(\bar X_s, \eta^{\varepsilon_k}_s)\cdot F(X^{\varepsilon_k}_s,\alpha^{\varepsilon_k}_s)\right\rangle ds.\label{Fo4.21}
\end{eqnarray}
Using
\begin{eqnarray*}
&&\int^t_{t_0}\left\langle\Phi(X_{s}^{\varepsilon_k},\alpha_{s}^{\varepsilon_k}),\partial^2_z U(\bar X_s, \eta^{\varepsilon_k}_s)\cdot F(X^{\varepsilon_k}_s,\alpha^{\varepsilon_k}_s)\right\rangle ds\\
=\!\!\!\!\!\!\!\!&&\sum^n_{i=1}\sum^n_{j=1}\int^t_{t_0} \partial _{z_i}\partial_{z_j} U(\bar X_s, \eta^{\varepsilon_k}_s) (F\otimes\Phi)_{ij}(X_{s}^{\varepsilon_k},\alpha^{\varepsilon_k}_{s})ds\\
=\!\!\!\!\!\!\!\!&&\frac{1}{2}\sum^n_{i=1}\sum^n_{j=1}\int^t_{t_0} \partial _{z_i}\partial_{z_j} U(\bar X_s, \eta^{\varepsilon_k}_s) \left[(F\otimes\Phi)+(F\otimes\Phi)^{\ast}\right]_{ij}(X_{s}^{\varepsilon_k},\alpha^{\varepsilon_k}_{s})ds
\end{eqnarray*}
and It\^{o}'s formula, we get
\begin{eqnarray}
U(\bar X_t, \eta^{\varepsilon_k}_t)
=\!\!\!\!\!\!\!\!&& U(\bar X_{t_0}, \eta^{\varepsilon_k}_{t_0})+\int^t_{t_0}(\mathcal{L}_1+\mathcal{L}_2+\mathcal{L}_3) U(\bar X_s, \eta^{\varepsilon_k}_s)ds\nonumber\\
&&+\int^t_{t_0}\big\langle\partial_z U(\bar X_s, \eta^{\varepsilon_k}_s),\frac{1}{\sqrt{\varepsilon_k}}[b(X^{\vare_k}_s, \alpha^{\varepsilon_k}_s)-\bar{b}(X^{\vare_k}_s)]\big\rangle ds\nonumber\\
&&+ \int^t_{t_0}\big\langle\partial_x U(\bar X_s, \eta^{\varepsilon_k}_s),\sigma(\bar X_s)d W_s\big\rangle+ \int^t_{t_0}\big\langle\partial_z U(\bar X_s, \eta^{\varepsilon_k}_s),[\partial_x\sigma(\bar{X}_s)\cdot\eta_s^{\varepsilon_k}] d W_s\big\rangle\nonumber\\
=\!\!\!\!\!\!\!\!&& U(\bar X_{t_0}, \eta^{\varepsilon_k}_{t_0})+\int^t_{t_0}(\mathcal{L}_1+\mathcal{L}_2+\mathcal{L}_3) U(\bar X_s, \eta^{\varepsilon_k}_s)ds\nonumber\\
&&+\frac12\sum^n_{i=1}\sum^n_{j=1}\int^t_{t_0} \partial_{x_i}\partial_{x_j} U(\bar X_s,\eta^{\vare_k}_s) \left[F\otimes\Phi+(F\otimes\Phi)^{\ast}\right]_{ij}(X_{s}^{\varepsilon_k},\alpha^{\varepsilon_k}_{s})ds\nonumber\\
&&+\sqrt{\vare_k}\big[R^{\vare_k}_{\partial_zU,\Phi}(t,t_0)\!+\!\big\langle\Phi(X_{t_0}^{\varepsilon_k},\alpha^{\varepsilon_k}_{t_0}), \partial_z U(\bar{X}_{t_0},\eta^{\varepsilon_k}_{t_0})\rangle\!-\!\langle\Phi(X_{t}^{\varepsilon_k},\alpha_{t}^{\varepsilon_k}), \partial_z U(\bar{X}_{t},\eta^{\varepsilon_k}_t)\big\rangle\big]\nonumber\\
&&+ \int^t_{t_0}\!\!\left\langle\partial_x U(\bar X_s, \eta^{\varepsilon_k}_s),\sigma(\bar X_s)d W_s\right\rangle\!+\! \int^t_{t_0}\!\!\left\langle\partial_z U(\bar X_s, \eta^{\varepsilon_k}_s),\nabla\sigma(\bar{X}_s)\cdot\eta_s^{\varepsilon_k} d W_s\right\rangle.\label{F4.21}
\end{eqnarray}
By \eref{E1}, \eref{X}, \eref{barX} and $U\in C^3_b(\RR^{2n})$, it is easy to see that
\begin{eqnarray}
&&\lim_{k\to\infty}\sqrt{\vare_k} \EE\big\{\big[R^{\vare_k}_{\partial_zU,\Phi}(t,t_0)+\left\langle\Phi(X_{t_0}^{\varepsilon_k},\alpha^{\varepsilon_k}_{t_0}), \partial_z U(\bar{X}_{t_0},\eta^{\varepsilon_k}_{t_0})\right\rangle\nonumber\\
&&\quad\quad\quad\quad\quad-\left\langle\Phi(X_{t}^{\varepsilon_k},\alpha_{t}^{\varepsilon_k}), \partial_z U(\bar{X}_{t},\eta^{\varepsilon_k}_t)\right\rangle\big]\Psi_{t_0}(\bar X,\eta^{\vare_k})\big\}=0.\label{F4.22}
\end{eqnarray}
Meanwhile,
\begin{eqnarray}
&&\EE\Big\{\Big[\int^t_{t_0}\left\langle\partial_x U(\bar X_s, \eta^{\varepsilon_k}_s),\sigma(\bar X_s)d W_s\right\rangle\nonumber\\
&&\quad\quad\quad\quad\quad+ \int^t_{t_0}\left\langle\partial_z U(\bar X_s, \eta^{\varepsilon_k}_s),\left(\partial_x\sigma(\bar{X}_s)\cdot\eta_s^{\varepsilon_k}\right) d W_s\right\rangle\Big]\Psi_{t_0}(\bar X,\eta^{\vare_k})\Big\}=0.\label{F4.23}
\end{eqnarray}
Thus it remains to prove
\begin{eqnarray}
\lim_{k\rightarrow \infty}\EE\left[\Gamma^{\vare_k}\Psi_{t_0}(\bar X,\eta^{\vare_k})\right]=\EE \left[\Gamma\Psi_{t_0}(\bar X, Z)\right],\label{Step2}
\end{eqnarray}
where
\begin{eqnarray*}
\Gamma^{\vare_k}:=\!\!\!\!\!\!\!\!&&U(\bar X_t, \eta^{\varepsilon_k}_t)-U(\bar X_{t_0}, \eta^{\varepsilon_k}_{t_0})-\int^t_{t_0}(\mathcal{L}_1+\mathcal{L}_2+\mathcal{L}_3) U(\bar X_s, \eta^{\varepsilon_k}_s)ds\\
&&-\frac{1}{2}\sum^n_{i=1}\sum^n_{j=1}\int^t_{t_0} \partial_{z_i}\partial_{z_j} U(\bar X_s,\eta^{\vare_k}_s) \left[F\otimes\Phi+(F\otimes\Phi)^{\ast}\right]_{ij}(X_{s}^{\varepsilon_k},\alpha^{\varepsilon_k}_{s})ds
\end{eqnarray*}
and
\begin{eqnarray*}
\Gamma:=U(\bar X_t, Z_t)-U(\bar X_{t_0}, Z_{t_0})-\int^t_{t_0}\mathcal{L}U(\bar X_s, Z_s)ds.
\end{eqnarray*}
Consequently, combining \eref{F4.21}-\eref{F4.23}, it is easy to see \eref{Mar11} holds.

%\vspace{2mm}
\textbf{Step 2}: The proof of \eref{Step2}. Since $(\bar{X},X^{\varepsilon_k},\eta^{\varepsilon_{k}}){\overset{W}\longrightarrow} (\bar{X}, \bar{X}, Z)$ in $C([0,T];\RR^{3n})$ for any $T>0$, we have
\begin{eqnarray*}
&&\lim_{k\to \infty} \Big|\EE\Big[\Big(U(\bar X_t, \eta^{\varepsilon_k}_t)-U(\bar X_{t_0}, \eta^{\varepsilon_k}_{t_0})-\int^t_{t_0}(\mathcal{L}_1+\mathcal{L}_2+\mathcal{L}_3) U(\bar X_s, \eta^{\varepsilon_k}_s)ds\Big)\Psi_{t_0}(\bar X,\eta^{\vare_k})\Big]\\
&&\quad\quad-\EE\Big[\left(U(\bar X_t, Z_t)-U(\bar X_{t_0}, Z_{t_0})-\int^t_{t_0}(\mathcal{L}_1+\mathcal{L}_2+\mathcal{L}_3) U(\bar X_s, Z_s)ds\right)\Psi_{t_0}(\bar X,Z)\Big]\Big|=0
\end{eqnarray*}
and, for any $1\leq i,j\leq n$,
\begin{eqnarray*}
&&\lim_{k\rightarrow \infty} \Big|\EE\left[\left(\int^t_{t_0} \partial_{z_i}\partial_{z_j} U(\bar X_s,\eta^{\vare_k}_s) \overline{[F\otimes\Phi+(F\otimes\Phi)^{\ast}]}_{ij}(X_{s}^{\varepsilon_k})ds\right)\Psi_{t_0}(\bar X,\eta^{\vare_k})\right]\\
&&\quad\quad-\EE\Big[\left(\int^t_{t_0} \partial_{z_i}\partial_{z_j} U(\bar X_s,Z_s) \overline{[F\otimes\Phi+(F\otimes\Phi)^{\ast}]}_{ij}(\bar X_{s})ds\right)\Psi_{t_0}(\bar X,Z)\Big]\Big|=0.
\end{eqnarray*}
Hence, in order to prove \eref{Step2}, we just have to prove the following, for any $1\leq i,j\leq n$,
\begin{eqnarray}
\lim_{k\rightarrow \infty} \EE\Big|\int^t_{t_0}\partial_{z_i}\partial_{z_j} U(\bar X_s,\eta^{\vare_k}_s) \left[(F\otimes\Phi)_{ij}(X_{s}^{\varepsilon_k},\alpha^{\varepsilon_k}_{s})-\overline{(F\otimes\Phi)}_{ij}(X^{\varepsilon_k}_{s})\right]ds\Big|=0.\label{MP2}
\end{eqnarray}
It is easy to check $\tilde{H}_{ij}(x,\alpha):=(F\otimes\Phi)_{ij}(x,\alpha)-\overline{(F\otimes\Phi)}_{ij}(x)\in C^2_{p}(\RR^n\times \mathbb{S}, \RR)$ and satisfies the "central condition" \eref{CenCon}. Th.\ref{Poisson} gives that the following Poisson equation
\begin{eqnarray*}
-Q(x)\tilde{\Phi}_{ij}(x,\cdot)(\alpha)=\tilde H_{ij}(x,\alpha)
\end{eqnarray*}
has a solution $\tilde{\Phi}_{ij}$. And there exists $k>0$ such that
\begin{eqnarray}
\sum^{2}_{l=0}\|\partial^l_{x}\tilde \Phi_{ij}(x,\cdot)\|_{\infty}\le  C(1+|x|^{k})\label{F04.30}
\end{eqnarray}
Next, by a similar argument in \eref{Fo4.21}, we have
\begin{eqnarray*}
&&\int^t_{t_0} \partial_{z_i}\partial_{z_j} U(\bar X_s,\eta^{\vare_k}_s)\tilde{H}_{ij}(X^{\varepsilon}_s,\alpha^{\vare_k}_s) ds\nonumber\\
=\!\!\!\!\!\!\!\!&&-\int^t_{t_0} \partial_{z_i}\partial_{z_j} U(\bar X_s,\eta^{\vare_k}_s)Q(X^{\varepsilon}_s)\tilde{\Phi}_{ij}(X^{\varepsilon}_s,\cdot)(\alpha^{\vare_k}_s) ds\nonumber\\
=\!\!\!\!\!\!\!\!&&\vare_k\big[ R^{\vare_k}_{\partial_{z_i}\partial_{z_j} U, \tilde{\Phi}_{ij}}(t,t_0)+\tilde{\Phi}_{ij}(X_{t_0}^{\varepsilon_k},\alpha^{\varepsilon_k}_{t_0}) \partial_{z_i}\partial_{z_j} U(\bar{X}_{t_0},\eta^{\varepsilon_k}_{t_0})-\tilde \Phi_{ij}(X_{t}^{\varepsilon_k},\alpha_{t}^{\varepsilon_k}) \partial_{z_i}\partial_{z_j} U(\bar{X}_{t},\eta^{\varepsilon_k}_t)\rangle\big]\nonumber\\
&&+\sqrt{\varepsilon_k}\int^t_{t_0}\tilde{\Phi}_{ij}(X_{s}^{\varepsilon_k},\alpha_{s}^{\varepsilon_k})\partial_z\partial_{z_i}\partial_{z_j} U(\bar X_s, \eta^{\varepsilon_k}_s)\cdot\left[b(X^{\vare_k}_s, \alpha^{\varepsilon_k}_s)-\bar{b}( X^{\vare_k}_s)\right] ds,\label{Fo4.22}
\end{eqnarray*}
where $R^{\vare_k}_{\partial_{z_i}\partial_{z_j} U, \tilde{\Phi}_{ij}}(t,t_0)$ is defined as in \eref{DRPhi} with $\partial_z U$ and $\Phi$ being replaced by $\partial_{z_i}\partial_{z_j} U$ and $\tilde{\Phi}_{ij}$ respectively.
Then $U\in C^4_b(\RR^{2n})$ and \eref{F04.30} imply that \eref{MP2} holds.

%\vspace{3mm}
\textbf{Step 3}:
In order to complete the proof, it is necessary to demonstrate the uniqueness of solutions to the martingale problem with the operator $\mathcal{L}$, which is equivalent to the uniqueness of weak solutions of  \eref{AR1} and \eref{limE}. It should be noted that \eref{AR1} exhibits well-posedness, while \eref{limE} is a linear equation. Consequently, it can be easily verified that \eref{AR1} and \eref{limE} have a unique strong solution.
%Hence, weak uniqueness can be inferred through the classical Yamada-Watanabe principle (refer to \cite[Chapter 5, Proposition 3.20]{KS1991}).
The proof is complete. $\square$

\begin{remark}
In order to establish the validity of the square root of the diffusion matrix $\overline{F\otimes \Phi+(F\otimes \Phi)^*}(x)$, it is necessary to prove the nonnegativity. It should be noted that $Q(x)$ is negative under the invariant measure $\mu^x$, that is, for any function $f$ on $\mathbb{S}$ and $x$ in $\RR^n$, the following inequality holds:
$$\sum_{i\in \mathbb{S}}[Q(x)f](i)f(i)\mu^x_{i}\leq 0.$$
Based on this, it can be concluded that the diffusion matrix $\Theta(x)\Theta(x)\!=\!\overline{F\otimes \Phi\!+\!(F\otimes \Phi)^*}(x)$ is nonnegative, as demonstrated in \cite[Lemmq 4.2.4]{PTW2012}.
\end{remark}

\subsection{Weak convergence of $(X^{\varepsilon}_t-\bar{X}_t)/\sqrt{\varepsilon}$ in $\RR^n$}

In this subsection, we study the weak convergence \eref{WeakA2}. In order to achieve a satisfactory order of convergence, we require the condition ${\color{blue} {\bf H}_{3}}$. In fact, using Remark \ref{HighR}, ${\color{blue} {\bf H}_{3}}$ ensures that $\bar b\in C^{3}_p(\RR^n,\RR^n)$.

We consider the following Kolmogorov equation:
	\begin{equation}\left\{\begin{array}{l}\label{KE}
			\displaystyle
			\partial_t u(t,x,z)=\mathcal{L} u(t,x,z),\quad t\in[0, T], \\
			u(0, x,z)=\phi(z),
		\end{array}\right.
	\end{equation}
where $\phi\in C^{4}_p(\RR^{n})$ and $\mathcal{L}$ is the infinitesimal generator of Eq.\eref{AR1} and Eq.\eref{limE}, which is defined in \eref{Generator}. \eref{KE} admits a unique solution $u$ given by
	$$
	u(t,x,z)=\EE\phi(Z^{x,z}_t),\quad t\geq 0,
	$$
where $Z^{x,z}$ is the unique solution of Eq. \eref{limE} with $\bar{X}_0=x$ and $Z_0=z$.

\vspace{2mm}
For $i,j\in \mathbb{N}_{+}$ and vectors $l_1,\ldots,l_{i+j}\in \RR^n$, denote $\partial^i_{x}\partial^j_z u(t,x,z)$ by the $(i+j)$-order partial derivative of $u(t,x,z)$ with respect to $x$ $i$-times and $z$ $j$-times. For instance, the chain rule we have
$$\partial_x u(t,x,z)\cdot l=\EE[\nabla\phi(Z^{x,z}_t)\cdot (\partial_x Z^{x,z}_t\cdot l)],~~~\partial_{z} u(t,x,z)\cdot l=\EE[\nabla\phi(Z^{x,z}_t)\cdot (\partial_z Z^{x,z}_t\cdot l)],$$
where $\partial_x Z^{x,z}_t\cdot l$ and $\partial_z Z^{x,z}_t\cdot l$ are directional derivatives of $Z^{x,z}_t$ with respect to $x$ and $z$ in the direction $l$ respectively, which satisfy
\begin{equation*}\left\{\begin{array}{l}
				\displaystyle\label{partialxZ}
d \left[\partial_x Z^{x,z}_t\cdot l\right]=  \left[\nabla^2\bar{b}(\bar{X}^x_t)\cdot (Z^{x,z}_t,\partial_x\bar{X}^x_t\cdot l)+\nabla\bar{b}(\bar{X}^x_t)\cdot (\partial_x Z^{x,z}_t\cdot l)\right] dt\\
\quad\quad\quad\quad\quad\quad\quad+\left[\nabla^2\sigma(\bar{X}^x_t)\cdot (Z^{x,z}_t, \partial_x\bar{X}^x_t\cdot l)+\nabla\sigma(\bar{X}^x_t)\cdot (\partial_x Z^{x,z}_t\cdot l)\right]dW_t\\
\quad\quad\quad\quad\quad\quad\quad+\nabla\Theta(\bar{X}^x_t)\cdot (\partial_x\bar{X}^x_t\cdot l) d\tilde{W}_t,\nonumber\\
\partial_x Z^{x,z}_0\cdot l=0,\nonumber
			\end{array}\right.
		\end{equation*}
and
\begin{equation*}\left\{\begin{array}{l}
				\displaystyle\label{partialzZ}
d \left(\partial_z Z^{x,z}_t\cdot l\right)=  \nabla\bar{b}(\bar{X}^x_t)\cdot (\partial_z Z^{x,z}_t\cdot l) dt+\nabla\sigma(\bar{X}^x_t)\cdot (\partial_z Z^{x,z}_t\cdot l)dW_t,\nonumber\\
\partial_z Z^{x,z}_0\cdot l=l,\nonumber
			\end{array}\right.
		\end{equation*}
where $\partial_x \bar{X}^{x}_t\cdot l$ is directional derivative of $\bar{X}^{x}_t$ with respect to $x$ in the direction $l$, which satisfies
		\begin{equation*}\left\{\begin{array}{l}
				\displaystyle
				d[\partial_x \bar{X}^{x}_t\cdot l]=\nabla\bar{b}(\bar{X}^{x}_t)\cdot (\partial_x \bar{X}^{x}_t\cdot l)dt+\nabla\sigma(\bar{X}^{x}_t)\cdot (\partial_x \bar{X}^{x}_t\cdot l)d W_t,\nonumber\\
				\partial_x \bar{X}^{x}_0\cdot l=l.\nonumber
			\end{array}\right.
		\end{equation*}
The other notations can be interpreted similarly.

%\vspace{2mm}
Now, we present the regularity estimates of the solution $u$ of Eq.\eref{KE}.
	
\begin{lemma} \label{Lemma 5.3}
		For $0\leq i\leq 2$ and $0\leq j\leq 4$ with $0\leq i+j\leq 4$, unit vectors $l_1,\ldots,l_{i+j}\in \RR^n$ and $T>0$, there exist $C_T>0$ and $k>0$ such that for $x\in\RR^n$,
		\begin{eqnarray}
		&&\sup_{t\in[0,T],z\in\mathbb{R}^n}\!\!\left\{\left|\partial^{i}_{x}\partial^{j}_z u(t,x,z)\cdot(l_1,\ldots,l_{i+j})\right|\!+\!|\partial_t(\partial_z u(t,x,z))\cdot l_1|\!+\!\left.|\partial_t(\partial^2_z u(t,x,z))\cdot (l_1,l_2)\right|\right\}\nonumber\\
		&&\leq C_T (1+|x|^{k}).\label{UE1}
\end{eqnarray}			
	\end{lemma}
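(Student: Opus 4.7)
The plan is to represent the derivatives of $u$ probabilistically via the stochastic flow. Since $u(t,x,z)=\mathbb{E}\phi(Z^{x,z}_t)$ with $\phi\in C^4_p(\mathbb{R}^n)$, the chain rule yields, for any multi-index with $i+j\le 4$,
\begin{equation*}
\partial^i_x\partial^j_z u(t,x,z)\cdot(l_1,\dots,l_{i+j})
=\mathbb{E}\bigl[\,\mathcal{R}_{i,j}\!\bigl(\nabla^{\le i+j}\phi(Z^{x,z}_t),\, \partial^{\le i}_x\partial^{\le j}_z Z^{x,z}_t\bigr)\bigr],
\end{equation*}
where $\mathcal{R}_{i,j}$ is the usual Fa\`a di Bruno polynomial expression in the derivatives of $\phi$ and of the flow. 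Hence it is enough to (a) control arbitrary polynomial moments of $Z^{x,z}_t$, $\bar X^x_t$ and of all the directional derivatives $\partial^{i}_x\partial^{j}_z Z^{x,z}_t$ up to order $4$, and (b) combine these with the polynomial growth bound $|\nabla^k\phi(\zeta)|\le C(1+|\zeta|^m)$ via H\"older's inequality.

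The first step is to establish moment bounds of the form $\mathbb{E}\sup_{t\le T}|\partial^i_x\partial^j_z Z^{x,z}_t|^p\le C_{p,T}(1+|x|^{k_p})$ for $i+j\le 4$. For this I would differentiate \eqref{AR1} and \eqref{limE} successively and write SDEs for the variation processes $\partial_x\bar X$, $\partial^m_x\bar X$ ($m\le 4$), $\partial_x Z, \partial_z Z$ and their higher analogues. Because $\bar b\in C^3_p$ and $\sigma\in C^3_p$ (Remark after Lemma 4.5, together with \textbf{H}$_3$), each variation SDE is linear in the top-order derivative with coefficients that are polynomials in $\bar X^x_t$ and in the lower-order variations. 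A standard induction on the order, using It\^o's formula, the Burkholder--Davis--Gundy inequality, Gronwall, and the a priori bound \eqref{barX} for $\bar X^x$ and \eqref{supZvare}-type bounds for $Z^{x,z}$, then yields the desired polynomial moment estimates. This is the step that carries the real work; the restriction $i\le 2$ in the statement is exactly what \textbf{H}$_3$ buys us (through $\bar b\in C^3_p$ and $\bar\sigma\in C^3_p$), since each $x$-derivative of the flow costs one derivative of the coefficients.

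Once these flow estimates are in hand, the spatial derivative bound in \eqref{UE1} follows immediately by plugging into the Fa\`a di Bruno formula and using the polynomial growth of $\nabla^k\phi$. For the time derivatives $\partial_t(\partial_z u)$ and $\partial_t(\partial^2_z u)$, I would commute time and space derivatives (legitimate thanks to the smoothness just established) to write
\begin{equation*}
\partial_t(\partial^j_z u)(t,x,z)=\partial^j_z(\mathcal{L} u)(t,x,z),\qquad j=1,2,
\end{equation*}
and then bound the right-hand side by distributing the $z$-derivatives over the four pieces $\mathcal{L}_1,\dots,\mathcal{L}_4$ of \eqref{Generator}: each summand is a product of a derivative of $u$ of total order at most $4$ (already bounded by the previous step) with a coefficient $\bar b$, $\sigma$, $\nabla\bar b\cdot z$, $\nabla\sigma\cdot z$ or $\Theta\Theta^{*}$, all of which are of polynomial growth in $x$ uniformly in $z$ after at most two $z$-differentiations (the $z$-coefficients are linear in $z$, so two $z$-derivatives kill the $z$-dependence entirely). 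Collecting constants gives the uniform-in-$z$ bound $C_T(1+|x|^k)$, completing the proof. The main obstacle is the inductive moment estimate for the fourth-order flow derivatives $\partial^i_x\partial^j_z Z^{x,z}_t$; every other ingredient is either bookkeeping or an application of already-established regularity of $\bar b$, $\bar\sigma$ and $\Theta$.
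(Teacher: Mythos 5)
Your proposal follows essentially the same route as the paper: establish polynomial moment bounds for the directional derivatives $\partial^i_x\partial^j_z Z^{x,z}_t$ up to total order $4$ by differentiating the variation SDEs and using It\^{o}/BDG/Gronwall together with $\bar b,\sigma\in C^3_p$ and the one-sided bound on $\nabla\bar b$, and then deduce \eqref{UE1} from the chain rule and $\phi\in C^4_p$. Your explicit handling of the time derivatives via $\partial_t(\partial^j_z u)=\partial^j_z(\mathcal{L}u)$ is a detail the paper leaves implicit, but it is the intended argument and is consistent with the stated ranges $i\le 2$, $j\le 4$, $i+j\le 4$.
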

	
\begin{proof}
By \eref{barc1}, we can obtain for any unit vector $l\in\RR^n$,
\begin{eqnarray}
\langle\nabla \bar{b}(x)\cdot l,l\rangle\leq C|x|^2.\label{fourthpartial}
\end{eqnarray}
Using $\bar{b}\in C^{3}_{p}(\RR^n,\RR^n)$ and $\sigma\in C^{3}_{p}(\RR^n,\RR^{n}\otimes\RR^{d})$ and \eref{fourthpartial}, we calculate directly and get that, for $0\leq i\leq 2$ and $0\leq j\leq 4$ with $0\leq i+j\leq 4$, unit vectors $l_1,\ldots,l_{i+j}\in \RR^n$ and $p,T>0$, there exist $C_T>0$ and $k_p>0$ such that for $x\in\RR^n$,
		\begin{eqnarray}
			\sup_{0\leq t\leq T,z\in\RR^n}\EE|\partial^i_x\partial^j_z Z^{x,z}_t\cdot (l_1,\ldots,l_{i+j})|^{p}\leq C_T(1+|x|^{k_p}).\label{EZ}
		\end{eqnarray}
Using \eref{EZ}, \eref{UE1} is easily derived from $\phi\in C^{4}_p(\RR^{n})$. The proof is complete.
\end{proof}

Finally, we prove our second result in the central limit theorem.
\vspace{-3mm}

~~\\
\textbf{Proof of Theorem \ref{main result 5}:}
Fxed $t>0$, let $\tilde{u}^t(s,x,z):=u(t-s,x,z)$, $s\in [0,t]$. By It\^{o}'s formula and following a similar argument in \eref{Fo4.21}, we have
\begin{eqnarray*}
\tilde{u}^t(t, \bar{X}_t,\eta^{\vare}_t)=\!\!\!\!\!\!\!\!&&\tilde{u}^t(0,x,0)+\int^t_0 \partial_s \tilde{u}^t(s, \bar{X}_s,\eta^{\vare}_s)ds+\int^t_{0}(\mathcal{L}_1+\mathcal{L}_2+\mathcal{L}_3) \tilde{u}^t(s, \bar{X}_s,\eta^{\vare}_s)ds\\
&&+\frac{1}{\sqrt{\varepsilon}}\int^t_{0}\left\langle\partial_z \tilde{u}^t(s, \bar{X}_s,\eta^{\vare}_s),\left(b(X^{\vare}_s, \alpha^{\vare}_s)-\bar{b}( X^{\vare}_s)\right)\right\rangle ds\\
&&+\int^t_{0}\!\!\left\langle\partial_x \tilde{u}^t(s, \bar{X}_s,\eta^{\vare}_s),\sigma(\bar X_s)d W_s\right\rangle\!+\! \int^t_{0}\big\langle\partial_z \tilde{u}^t(s, \bar{X}_s,\eta^{\vare}_s),\partial_x\sigma(\bar{X}_t)\cdot\eta_s^{\varepsilon} dW_s\big\rangle\\
=\!\!\!\!\!\!\!\!&&\tilde{u}^t(0,x,0)+\int^t_0 \partial_s \tilde{u}^t(s, \bar{X}_s,\eta^{\vare}_s)ds+\int^t_{0}(\mathcal{L}_1+\mathcal{L}_2+\mathcal{L}_3)\tilde{u}^t(s, \bar{X}_s,\eta^{\vare}_s)ds\\
&&+\frac12\sum^n_{i=1}\sum^n_{j=1}\int^t_{0} \partial_{z_i}\partial_{z_j} \tilde{u}^t(s, \bar{X}_s,\eta^{\vare}_s) \left[F\otimes\Phi+(F\otimes\Phi)^{\ast}\right]_{ij}(X_{s}^{\varepsilon},\alpha^{\varepsilon}_{s})ds\\
&&+\sqrt{\vare_k}\Big[R^{\vare}_{\partial_z\tilde{u},\Phi}(t,0)+\int^t_0\langle \Phi(X_{s}^{\varepsilon},\alpha^{\varepsilon}_{s}),\partial_s\partial_z\tilde{u}^t(s, \bar{X}_s,\eta^{\vare}_s)\rangle ds\\
&&\quad\quad\quad\quad+ \langle\Phi(x,\alpha), \partial_z \tilde{u}^t(0,x,0)\rangle-\langle\Phi(X_{t}^{\varepsilon},\alpha_{t}^{\varepsilon}), \partial_z \tilde{u}^t(t, \bar{X}_t,\eta^{\vare}_t)\rangle\Big]\\
&&+ \int^t_{0}\!\left\langle\partial_x \tilde{u}^t(s, \bar{X}_s,\eta^{\vare}_s),\sigma(\bar X_s)d W_s\right\rangle\!+\!\! \int^t_{0}\big\langle\partial_z \tilde{u}^t(s, \bar{X}_s,\eta^{\vare}_s),\partial_x\sigma(\bar{X}_t)\cdot\eta_s^{\varepsilon}dW_s\big\rangle,
	\end{eqnarray*}
where $R^{\vare}_{\partial_z\tilde{u},\Phi}(t,0)$ is defined in \eref{DRPhi} with $\vare_k$ and $\partial_z U$ are replaced by $\vare$ and $\partial_z\tilde{u}$ respectively.	
	
Note that
$$\tilde{u}^t(t, \bar{X}_t,\eta^{\vare}_t)=\phi(\eta^{\vare}_t),\tilde{u}^t(0, x,0)=\EE\phi(Z^{x,0}_t)$$
 and
 $$\partial_s \tilde{u}^t(s, \bar{X}_s,\eta^{\vare}_s )=-\mathcal{L} \tilde{u}^t(s, \bar{X}_s,\eta^{\vare}_s ),$$
 we have
	\begin{eqnarray}
		\left|\EE\phi(\eta^{\vare}_t)-\EE\phi(Z^{x,0}_t)\right|
=\!\!\!\!\!\!\!\!&&\Big|\EE\int^t_0 -\mathcal{L}\tilde{u}^t(s, \bar{X}_s,\eta^{\vare}_s )ds+\EE\int^t_{0}(\mathcal{L}_1+\mathcal{L}_2+\mathcal{L}_3)\tilde{u}^t(s, \bar{X}_s,\eta^{\vare}_s)ds\nonumber\\
&&+\!\frac12\sum^n_{i=1}\sum^n_{j=1}\EE\!\int^t_{0} \!\partial_{z_i}\partial_{z_j} \tilde{u}^t(s, \bar{X}_s,\eta^{\vare}_s) \left[F\otimes\Phi\!+\!(F\otimes\Phi)^{\ast}\right]_{ij}(X_{s}^{\varepsilon},\alpha^{\varepsilon}_{s})ds\nonumber\\
&&+\sqrt{\vare}\EE\Big[R^{\vare}_{\partial_z\tilde{u},\Phi}(t,0)+\int^t_0\langle \Phi(X_{s}^{\varepsilon},\alpha^{\varepsilon}_{s}),\partial_s\partial_z\tilde{u}^t(s, \bar{X}_s,\eta^{\vare}_s)\rangle ds\nonumber\\
&&\quad\quad\quad+\langle\Phi(x,\alpha), \partial_z \tilde{u}^t(0,x,0)\rangle-\langle\Phi(X_{t}^{\varepsilon},\alpha_{t}^{\varepsilon}), \partial_z \tilde{u}^t(t, \bar{X}_t,\eta^{\vare}_t)\rangle\Big]\Big|\nonumber\\
\leq\!\!\!\!\!\!\!\!&&\frac{1}{2}\EE\Big|\int^t_{0} \text{Tr}\big[\partial^2_{z} \tilde{u}^t(s, \bar{X}_s,\eta^{\vare}_s) \left[F\otimes\Phi+(F\otimes\Phi)^{\ast}\right](X_{s}^{\varepsilon},\alpha_{s}^{\varepsilon})\big]\nonumber\\
&&\quad\quad\quad-\text{Tr}\left[\partial^2_{z} \tilde{u}^t(s, \bar{X}_s,\eta^{\vare}_s)\overline{F\otimes\Phi+(F\otimes\Phi)^{\ast}}(\bar X_{s})\right]ds\Big|\nonumber\\
&&+\sqrt{\vare}\Big|\EE\big[R^{\vare}_{\partial_z\tilde{u},\Phi}(t,0)+\int^t_0\langle \Phi(X_{s}^{\varepsilon},\alpha^{\varepsilon}_{s}),\partial_s\partial_z\tilde{u}^t(s, \bar{X}_s,\eta^{\vare}_s)\rangle ds\nonumber\\
&&\quad\quad+\langle\Phi(x,\alpha), \partial_z \tilde{u}^t(0,x,0)\rangle\!-\!\langle\Phi(X_{t}^{\varepsilon},\alpha_{t}^{\varepsilon}), \partial_z \tilde{u}^t(t, \bar{X}_t,\eta^{\vare}_t)\rangle\big]\Big|.
\label{F11}
	\end{eqnarray}
	
For any $s\in [0,t], x\in\RR^{n},y\in\RR^{m}$, define
\begin{eqnarray*}
F^t(s,x,z,\alpha):=\!\!\!\!\!\!\!\!&&\frac{1}{2}\text{Tr}\big[\partial^2_{z} \tilde{u}^t(s, x,z)[F\otimes\Phi+(F\otimes\Phi)^{\ast}](x,\alpha)\big]\nonumber\\
&&-\frac{1}{2}\text{Tr}\left[\partial^2_{z} \tilde{u}^t(s, x,z)\overline{F\otimes\Phi+(F\otimes\Phi)^{\ast}}(x)\right].
\end{eqnarray*}
By Lemma \ref{Lemma 5.3} and $F\otimes\Phi-\overline{(F\otimes\Phi)}\in C^{2}_p(\RR^n\times\mathbb{S},\RR^n\otimes\RR^n)$, we can obtain that for any $i+j\leq 2$,
 $$ \sup_{s\in [0, t],z\in\RR^n}\left[\left\|\partial^i_x\partial^j_z F^t(s,x,z,\cdot)\right\|_{\infty}+\left\|\partial_s F^t(s,x,z,\cdot)\right\|_{\infty}\right]\leq C(1+|x|^{k}).$$
 Then by Th.\ref{Poisson}, the following Poisson equation
	\begin{eqnarray*}
		-Q(x)\tilde{\Phi}^t(s,x,z,\cdot)(\alpha)=F^t(s,x,z,\alpha),\quad s\in [0,t],\alpha\in \mathbb{S}\label{WPE}
	\end{eqnarray*}
admits a solution $\tilde{\Phi}^t(s,x,z,i)$. Moreover, for any $T>0$, $t\in [0,T]$, there exist $C_T, k>0$ such that the following estimate holds for any $0\leq i+j\leq 2$
	\begin{eqnarray}
		\sup_{s\in [0, t],z\in\RR^n}\left[\left\|\partial^i_x\partial^j_z \tilde{\Phi}^t(s,x,z,\cdot)\right\|_{\infty}+\left\|\partial_s \tilde{\Phi}^t(s,x,z,\cdot)\right\|_{\infty}\right]\leq C_T(1+|x|^{k}). \label{WR2}
	\end{eqnarray}
Using It\^o's formula and taking expectation on both sides, we get
\begin{eqnarray*}
\EE\tilde{\Phi}^t(t,\bar{X}_t,\eta^{\vare}_t,\alpha_t^{\varepsilon})=\!\!\!\!\!\!\!\!\!&&\tilde{\Phi}^t(0,x,0,\alpha)+\EE\int^t_0 \partial_s \tilde{\Phi}^t(s, \bar{X}_s,\eta^{\vare}_s,\alpha_s^{\varepsilon})ds\\
&&+\EE\int^t_0 \!\sum_{k=1}^3\mathcal{L}_k\tilde{\Phi}^t(s,\bar{X}_s,\eta^{\vare}_s,\alpha_s^{\varepsilon})ds\!+\!\frac{1}{\varepsilon}\EE\int_0^tQ(X_{s}^{\varepsilon})\tilde{\Phi}^t(s,\bar{X}_s,\eta^{\vare}_s,\cdot)(\alpha_s^{\varepsilon})ds\\
&&+\int^t_{0}\Big\langle\partial_z \tilde{\Phi}^t(s,\bar{X}_s,\eta^{\vare}_s,\alpha_s^{\varepsilon}),\frac{1}{\sqrt{\varepsilon}}\big[b( X^{\vare}_s,  \alpha^{\varepsilon}_s)-\bar{b}(X^{\vare}_s)\big]\Big\rangle ds.
\end{eqnarray*}
This and \eref{WR2}, it follows
\begin{eqnarray}
&&\Big|\EE\int^t_0 F^t(s,\bar{X}_s,\eta^{\vare}_s,\alpha_s^{\varepsilon})ds\Big|= \Big|\EE\int_0^tQ(\bar{X}_s)\tilde{\Phi}^t(s,\bar{X}_s,\eta^{\vare}_s,\cdot)(\alpha_s^{\varepsilon})ds\Big|\nonumber \\
\leq\!\!\!\!\!\!\!\!&& \Big|\EE\int_0^t[Q(\bar{X}_s)\!-\!Q(X^{\vare}_s)]\tilde{\Phi}^t(s,\bar{X}_s,\eta^{\vare}_s,\cdot)(\alpha_s^{\varepsilon})ds\Big|\!+\!\Big|\EE\int_0^tQ(X^{\vare}_s)\tilde{\Phi}^t(s,\bar{X}_s,\eta^{\vare}_s,\cdot)(\alpha_s^{\varepsilon})ds\Big|\nonumber \\
\leq\!\!\!\!\!\!\!\!&&\Big|\EE\int_0^t[Q(\bar{X}_s)-Q(X^{\vare}_s)]\tilde{\Phi}^t(s,\bar{X}_s,\eta^{\vare}_s,\cdot)(\alpha_s^{\varepsilon})ds\Big|\nonumber \\
&&+\!\vare\Big|\tilde{\Phi}^t(0,\!x,\!0,\!\alpha)\!-\!\EE\tilde{\Phi}^t(t,\!\bar{X}_t,\!\eta^{\vare}_t,\!\alpha_t^{\varepsilon})\!+\!\EE\!\!\int^t_0\!\! \partial_s \tilde{u}^t(s, \!\bar{X}_s,\!\eta^{\vare}_s)ds\!+\!\sum_{k=1}^3\!\EE\!\!\int^t_0\!\!\mathcal{L}_k\!\tilde{\Phi}^t(s,\!\bar{X}_s,\!\eta^{\vare}_s,\!\alpha_s^{\varepsilon})ds\Big|\nonumber\\
&&+\sqrt{\vare}\Big|\EE\int^t_{0}\left\langle\partial_z \tilde{\Phi}^t(s,\bar{X}_s,\eta^{\vare}_s,\alpha_s^{\varepsilon}),\left[b( X^{\vare}_s,  \alpha^{\varepsilon}_s)-\bar{b}(X^{\vare}_s)\right]\right\rangle ds\Big|\nonumber\\
\leq\!\!\!\!\!\!\!\!&&\EE\!\int_0^t\!\!\|Q(\bar{X}_s)\!-\!Q(X^{\vare}_s)\|_{\ell}
\|\tilde{\Phi}^t(s,\bar{X}_s,\eta^{\vare}_s,\cdot)\|_{\infty}ds\!+\!C_{T}\sqrt{\vare}\sup_{s\in[0,t]}\EE\left(1\!+\!|\bar{X}_s|^k\!+\!|X^{\vare}_s|^k\!+\!|\eta^{\vare}_s|^k\right)\nonumber\\
\leq\!\!\!\!\!\!\!\!&&C_{T}(1+|x|^k)\sqrt{\vare}.\label{F3.39}
	\end{eqnarray}
	
	Combining \eref{F11}, \eref{WR2} and \eref{F3.39}, we get
	\begin{eqnarray*}
		\sup_{0\leq t\leq T}\left|\EE\phi(\eta^{\vare}_t)-\EE\phi(Z^{x,0}_t)\right|
\leq\!\!\!\!\!\!\!\!&&\sup_{0\leq t\leq T}\Big|\EE\int^t_0 F^t(s,\bar{X}_s,\eta^{\vare}_s,\alpha_s^{\varepsilon})ds\Big|\\
&&+\sqrt{\vare}\sup_{0\leq t\leq T}\Big|\EE\big[R^{\vare}_{\partial_z\tilde{u},\Phi}(t,0)+\int^t_0\langle \Phi(X_{s}^{\varepsilon},\alpha^{\varepsilon}_{s}),\partial_s\partial_z\tilde{u}^t(s, \bar{X}_s,\eta^{\vare}_s)\rangle ds\\
&&\quad\quad\quad+\langle\Phi(x,\alpha), \partial_z \tilde{u}^t(0,x,0)\rangle-\langle\Phi(X_{t}^{\varepsilon},\alpha_{t}^{\varepsilon}), \partial_z \tilde{u}^t(t, \bar{X}_t,\eta^{\vare}_t)\rangle\big]\Big|\\
\leq\!\!\!\!\!\!\!\!&&C_{T}\sqrt{\vare}(1+|x|^k).
	\end{eqnarray*}
	Finally, using \eref{5}, \eref{supZvare} and \eref{F4.2}, we obtain
	\begin{eqnarray*}
		\sup_{0\leq t\leq T}\left|\EE\phi(Z^{\vare}_{t})-\EE\phi(Z_{t})\right|\leq\!\!\!\!\!\!\!\!&&\sup_{0\leq t\leq T}\left|\EE\phi(Z^{\vare}_{t})-\EE\phi(\eta^{\vare}_{t})\right|+\sup_{0\leq t\leq T}\left|\EE\phi(\eta^{\vare}_{t})-\EE\phi(Z^{x,0}_{t})\right|\\
\leq\!\!\!\!\!\!\!\!&&\sup_{0\leq t\leq T}\EE\left[(1+|Z^{\vare}_{t}|^k+|\eta^{\vare}_{t}|^k)|Z^{\vare}_{t}-\eta^{\vare}_{t}|\right]+\sup_{0\leq t\leq T}\left|\EE\phi(\eta^{\vare}_{t})-\EE\phi(Z^{x,0}_{t})\right|\\
\leq\!\!\!\!\!\!\!\!&& C_{T}\sqrt{\vare}(1+|x|^{k}).
	\end{eqnarray*}
	The proof is complete. $\square$

\vspace{0.3cm}
\textbf{Acknowledgment}. We would like to thank Professor Renming Song for useful discussion. This work is supported by the National Natural Science Foundation of China (Nos.  12271219, 11931004, 12090010, 12090011),  the QingLan Project of Jiangsu Province and the Priority Academic Program Development of Jiangsu Higher Education Institutions.

\end{document}